\def\frak{\mathfrak}
\let\e=\varepsilon
\let\i=\iota
\def\E{\mathbb{E}}
\def\R{\mathbb{R}}
\def\cT{\mathcal{T}}
\def\cS{\mathcal{S}}
\def\al{\alpha}
\def\be{\beta}
\def\ga{\gamma}
\def\de{\delta}
\def\ep{\varepsilon}
\def\vo{\epsilon}
\def\ka{\kappa}
\def\si{\sigma}
\def\Ga{\Gamma}
\def\De{\Delta}
\def\Th{\Theta}
\def\La{\Lambda}
\def\Ph{\Phi}
\def\Om{\Omega}
\def\Up{\Upsilon}
\def\na{\nabla}
\newcommand{\wh}{\widehat}
\newcommand{\wt}{\widetilde}
\newcommand{\newc}{\newcommand}
\newtheorem{theorem}{Theorem}[section]
\newtheorem{lemma}[theorem]{Lemma}
\newtheorem{proposition}[theorem]{Proposition}
\theoremstyle{remark}
\newtheorem{remark}[theorem]{\rm\bf Remark}
\newtheorem{remarks}[theorem]{\rm\bf Remarks}
\newtheorem*{remark*}{\rm\bf Remark}
\newcommand{\Rho}{{\mbox{\sf P}}}
\newcommand{\sfW}{{\mbox{\sf W}}}
\newc{\aR}{\mbox{\boldmath{$ R$}}}
\newc{\aS}{\mbox{\boldmath{$ S$}}}
\newc{\aDeR}{\mbox{\boldmath{$ U$}}_B{}^P{}_C{}^Q}
\newc{\aDe}{\mbox{\boldmath$ \Delta$}}
\newc{\aNd}{\mbox{\boldmath$ \nabla$}}
\newc{\aK}{\mbox{\boldmath{$ K$}}}
\newc{\aL}{\mbox{\boldmath{$ L$}}}
\def\sideremark#1{\ifvmode\leavevmode\fi\vadjust{\vbox to0pt{\vss
 \hbox to 0pt{\hskip\hsize\hskip1em
 \vbox{\hsize3cm\tiny\raggedright\pretolerance10000
 \noindent #1\hfill}\hss}\vbox to8pt{\vfil}\vss}}}%
\let\E=\cE
\let\ti=\tilde
\let\bs=\boldsymbol
\def\tnabla{\bs{\nabla}}
\def\T{\bs\cT}
\def\pmat#1{\begin{pmatrix}#1\end{pmatrix}}
\def\smat#1{\left(\begin{smallmatrix}#1\end{smallmatrix}\right)}
\def\dd#1#2{\ifx#2\tfrac{d}{d#1}\else\tfrac{d^{#2}}{d#1^{#2}}\fi}
\def\ddt#1{\dd{t}{#1}}
\def\dds#1{\dd{s}{#1}}
\def\spann#1{\langle{#1}\rangle}
\def\spanb#1{\big\langle{#1}\big\rangle}
\def\dif#1#2{#1^{(#2)}{}}
\def\form#1{\bs{#1}}
\def\dform#1{\form{#1}'}
\def\ddform#1{\form{#1}''}
\def\dddform#1{\form{#1}'''}
\def\ddddform#1{\form{#1}''''}
\def\diform#1#2{\form{#1}^{(#2)}}
\def\tform#1{\wt{\form{#1}}}
\def\tdform#1{\tform{#1}{}'}
\def\tddform#1{\tform{#1}{}''}
\def\tdiform#1#2{\tform{#1}^{(#2)}}
\def\g#1#2{#1\bs\cdot#2}
\begin{document}
\title{Conformal theory of curves with tractors}

\author{Josef \v Silhan}
\address{Institute of Mathematics and Statistics \\ Masaryk University \\ Kotl\'a\v{r}sk\'a 2 \\ 61137 Brno\\ Czech Republic} 
\email{silhan@math.muni.cz}

\author{Vojt\v{e}ch \v{Z}\'adn\'\i k}
\address{Faculty of Education \\ Masaryk University \\ Po\v{r}\'\i\v{c}\'\i\ 31 \\ 60300 Brno \\ Czech Republic} 
\email{zadnik@mail.muni.cz}

\date{\today}

\keywords{conformal geometry; tractor calculus; curves; invariants}
\subjclass[2010]{53A30, 53A55, 53B25, 53B30}

\begin{abstract}
We present the general theory of curves in conformal  geometry using tractor calculus.
This primarily involves a tractorial determination of distinguished parametrizations and relative and absolute conformal invariants of generic curves.
The absolute conformal invariants are defined via a tractor analogue of the classical Frenet frame construction and then expressed in terms of relative ones.
This approach applies likewise to conformal structures of any signature;
in the case of indefinite signature we focus especially on the null curves.
It also provides a conceptual tool for handling distinguished families of curves (conformal circles and conformal null helices) and conserved quantities along them.
\end{abstract}

\maketitle

\section{Introduction}\label{Intro}
The local geometry of curves is a classical subject, nowadays developed for various geometric structures using various views and methods.
The study generally starts in flat spaces,
the passage to general curved cases demands new ideas and attitudes. 
We are going to evolve an approach based on conformal tractor calculus.
In this section, we present main sources of inspiration, summarize main results and introduce main tools needed later.

\subsection{Main sources and methods}
To our knowledge, the first general treatment of curves in conformal Riemannian manifolds is due to Fialkow \cite{Fialkow1942}, which is therefore used as a basic reference for comparisons.
In that paper, a conformally invariant Frenet-like approach is developed. 
This primarily requires a natural distinguished parametrization of the curve (in the place of the arc-length parameter in the Riemannian setting), a natural starting object (in the place of the unit tangent vector) and a notion of derivative along the curve (in the place of the restricted Levi-Civita connection).
Having all these instruments, it is easy to derive a conformal analogue of Fernet formulas with a distinguished set of conformal invariants, the conformal curvatures of the curve.
For generic curves in an $n$-dimensional manifold, this yields $n-1$ conformal curvatures, one of which has an exceptional flavour.

The Frenet frame can be seen as an example of a more general concept of moving frame by Cartan.
An application of the latter method for immersed submanifolds, especially curves, in the homogeneous M\"obius space is presented by Schiemangk and Sulanke in \cite{Sulanke1980} and \cite{Sulanke1981}, which we adduce as important sources of inspiration.
That way, the curve in the homogeneous space is covered by a curve in the principal group so that the restriction of the Maurer--Cartan form to the lift reveals the generating set of invariants.
The lift can be identified with, and practically is constructed as, a curve of (pseudo-)orthonormal frames in the ambient Minkowski space, whose dimension is two more higher than the dimension of the M\"obius space.

Both the homogeneous principal bundle with the Maurer--Cartan form and the associated homogeneous vector bundle, whose fibre is the ambient Minkowski space with the induced linear connection, has a counterpart over general conformal manifolds: 
the former leads to the notion of conformal Cartan connection, the latter to the Thomas standard tractor bundle with its linear connection and parallel bundle metric.
These two approaches are basically equivalent, in this paper we exploit the latter one.
One of its main advantages is that 
everything can be set off pretty directly in terms of underlying data, while it still has very conceptual flavour.
This, on the one hand, allows one to easily adapt key ideas from the homogeneous setting to the tractorial one.
On the other hand, expanding any tractorial formula yields a very concrete tensorial expression that allows comparisons to results obtained by that means.
We refer to the essential work \cite{Bailey1994} by Bailey, Eastwood and Gover both for 
the generalities on  tractor calculus and for an initial step in  the study of (distinguished) curves in this manner.

Besides the just cited main sources, there is a wide literature discussing curves in conformal and other geometries from various aspects.
A short review of the development of the topic for conformal structures can be found in the introduction of \cite{Cairns1994}.
Another references that are close to our purposes are \cite{Beffa2003}, \cite{Burstall2010} and \cite{Herzlich2012}.
An exhibition of tensorial techniques in the study of curves in related geometries can be found in \cite{Hlavaty1934}.
For typical subtleties in dealing with null curves in pseudo-Riemannian geometry, see e.g. \cite{Duggal2007}.
For an application of the Cartan's method of moving frame to curves in a broad class of homogeneous spaces, see \cite{Doubrov2013}.

\subsection{Aim, structure and results}
We study the local differential geometry of curves in general conformal manifolds of general signature using the tractor calculus.
Although the subject is indeed classic, the systematic tractorial approach is novel.
This approach has not only obvious formal benefits, but also a potential for discovering new results and relations.
In particular, the discussion for null curves in the case of indefinite signature (for which almost nothing is known) is very parallel to the one in positive definite case and leads to results that we, at least, would never obtain by other means.

The rough structure of the paper is as follows:
In sections \ref{Relative} and \ref{Absolute} we develop the theory for conformal Riemannian structures, in section \ref{Indefinite} we discuss its analogies in indefinite signature.
We primarily deal with generic curves, whereas special cases are briefly mentioned in accompanying remarks.
However, the most special case---\textit{conformal circles}---is dealt individually in section \ref{Geod}.

In section \ref{Relative} we follow the setting of \cite{Bailey1994}, where one already finds the tractorial determination of the preferred class of \textit{projective} parametrizations on any curve 
as well as  the projectively parametrized {conformal circles}.
Continuing further with tractors of higher order, we build a natural reservoir of relative conformal invariants associated to any curve (subsection \ref{relative}).
The simplest of these leads to the notion of \textit{conformal arc-length}, the distinguished conformally invariant parametrization of the curve (subsection \ref{arc}).
This invariant vanishes identically along the curve if and only if the curve is an arbitrarily parametrized {conformal circle} (Proposition \ref{prop-geod}).
As a demonstration of the ease of use of tractors we describe some conserved quantities along projectively parametrized conformal circles on manifolds admitting an almost Einstein scale, respectively normal conformal Killing field (subsection \ref{conserve}).

In section \ref{Absolute} we launch the Frenet-like procedure to absolute conformal invariants of curve:
\begin{enumerate}[$\bs\cdot$]
\item construct a pseudo-orthonormal tractor Frenet frame along the given curve,
\item differentiate with respect to the conformal arc-length and extract the tractor Frenet formulas,
\item the coefficients of that system determine the generating set of invariants.
\end{enumerate}
For the curve in an $n$-dimensional manifold, we have $n-1$ conformal curvatures, one of which has an exceptional flavour.
The vanishing of the exceptional curvature has an immediate interpretation relating the above mentioned parametrizations (Proposition \ref{prop-K1}).
By construction, all conformal curvatures are expressed via the tractors from the tractor Frenet frame, i.e. with respect to the conformal arc-length parametrization.
Alternative expressions in terms of initial tractors are also possible.
In particular, for all nonexceptional curvatures, we have very simple formulas using the previously defined relative conformal invariants, i.e. with respect to an arbitrary parametrization (Theorem \ref{husty-thm}).
For a given scale, 
we also indicate how to express conformal curvatures in terms of the Riemannian ones (subsection \ref{riemann}).
Besides the pure pleasure, this effort allows a comparison of our invariants with those in the literature, especially in \cite{Fialkow1942}.

In section \ref{Indefinite} we adapt the previous scheme to conformal manifolds of indefinite signature.
In that case we distinguish space-, time- and light-like curves according to the type of their tangent vectors.
A full type classification of curves (via associated tractors) becomes very rich, depending on the dimension and signature.
The construction of the tractor Frenet frame has to be adapted to the respective type, which makes any attempt on its universal description impossible.
Avoiding the complicated branching of the discussion, we only point out the main features (Remark \ref{indef4}(3)) and focus on the light-like curves.
Among these curves we identify an appropriate analogue of conformal circles, the \textit{conformal null helices}.
Notably, in their characterization another family of relative conformal  invariants of Wilczynski type appears (Theorem \ref{r-null}).
More details on the construction of the tractor Frenet frame and expressions of the corresponding conformal curvatures are discussed in the case of Lorenzian signature (subsection \ref{lorenz}).

\subsection{Notation and conventions}
Most of the following conventions is taken from \cite{Bailey1994}.
A \textit{conformal structure} of signature $(p,q)$ on a smooth manifold $M$ of dimension $n=p+q$ is a class of pseudo-Riemannian metrics of signature $(p,q)$ that differ by a multiple of an  everywhere positive function.
For all tensorial objects on  $M$ we use the standard abstract index notation.
Thus, the symbol $\mu^a$ and $\mu_a$ refers to a section of the tangent and cotangent bundle, which is denoted as $\E^a:=TM$ and $\E_a:=T^*M$, respectively, multiple indices denote tensor products, e.g.\
$\mu_a{}^b$ is a section of $\E_a{}^b:=T^*M\otimes TM$ etc.
Round brackets denote symmetrization and square brackets denote skew symmetrization of enclosed
indices, e.g.\ sections  of $\E_{[ab]} = T^*M \wedge T^*M$ are 2-forms on $M$.
By $\E[w]$ we denote the density bundle of \textit{conformal weight} $w$, which is just the bundle of ordinary 
$(-\frac{w}{n})$-densities.
Tensor products with another bundles are denoted as $\E^a[w]:=\E^a\otimes\E[w]$ etc.
In what follows, the notation as $\mu^a\in\E^a[w]$ always means that $\mu^a$ is a section (and not an element) of $\E^a[w]$, global or local according to the context.

Conformal structure on  $M$ can be described by the \emph{conformal metric} $\mathbf{g}_{ab}$ which is a global section  of $\E_{(ab)}[2]$.
Any raising and lowering of indices is provided by the conformal metric, e.g. for $\mu^a\in\E^a[w]$ we have $\mu_a=\mathbf{g}_{ab}\mu^b\in\E_a[w+2]$.
A \textit{conformal scale} is an everywhere positive section of $\E[1]$.
The choice of scale $\si\in\E[1]$ corresponds to the choice of metric $g_{ab}\in\E_{(ab)}$ from the conformal class so that $g_{ab}=\si^{-2}\mathbf{g}_{ab}$.
The corresponding Levi-Civita connection is denoted as $\na$.
The Schouten tensor, which is a trace modification of the Ricci tensor, is denoted as $\Rho_{ab}$.
Transformations of quantities under the change of scale will be denoted by hats.
In particular, for $\wh\si=f\si$, $\Up_a=f^{-1}\na_a f$ and any $\mu^a\in\E^a$,
the Levi-Civita connection and the Schouten tensor change as
\begin{align}
\wh\na_a \mu^b &=\na_a \mu^b + \Up_a \mu^b - \mu_a\Up^b + \mu^c\Up_c\de_a{}^b, 
\label{transU} \\
\wh\Rho_{ab} &= \Rho_{ab} -\na_a\Up_b +\Up_a\Up_b -\tfrac12\Up_c\Up^c g_{ab}.
\label{transRho} 
\end{align}

\smallskip
Let $\Ga\subset M$ be a curve with a parametrization $t:\Ga\to\R$ and the tangent vector $U^a$ so that $U^a\nabla_a t=1$.
Regular curve is called space-like, time-like, respectively light-like (or null), in accord with the type of its tangent vector $U^a$, i.e. if $U_cU^c>0$, $U_cU^c<0$, respectively $U_cU^c=0$.
Of course, the curve may change the  type of its tangent vectors.
Since the whole study that follows is of very local nature, we restrict ourselves only to the (segments of) curves of fixed type.
In the first two cases, the density
\begin{equation}
u:=\sqrt{|U_cU^c|} \in \E[1]
\label{uu}
\end{equation}
is nowhere vanishing and will be employed later. Henceforth we always assume $\Ga$ is smooth.
Along the curve we use notation $\ddt{}:=U^c\nabla_c$ and 
\begin{equation}
U'^a:=\ddt{}U^a,\qquad
U''^a:=\ddt{2}U^a,\qquad
\dots,\qquad
\dif{U}{i}^a:=\ddt{i}U^a
\label{U^a}
\end{equation}
for the derived vectors. 
By abuse of notation we often write $U^a\in\E^a$ which should be read as $U^a\in\E^a|_\Ga$, 
and similarly for any other quantities defined only along the curve $\Ga$.
Note that the vector $\dif{U}{i}^a$ has order $i+1$ (with respect to $\Ga$).
Obviously, none of vectors in \eqref{U^a} is conformally invariant.
For instance, the acceleration vector transforms according to \eqref{transU} as
\begin{equation}
\wh{U}'{}^a=U'^a - U^cU_c\,\Up^a + 2U^c\Up_c\,U^a
\label{transacc}
\end{equation}
From this it follows that, for space- and time-like curves (but not for null curves), a metric in the conformal class may be chosen so that $U'^a=0$, i.e. the curve is an affinely parametrized geodesic of the corresponding Levi-Civita connection.
This indicates the problem with a conformally invariant notion of osculating subspaces.
Instead, we are going to make use of tractors.

\smallskip

The \emph{conformal standard tractor bundle} $\T$ over the conformal manifold $M$ of signature $(p,q)$, where $p+q=n=\dim M$, 
is the tractor bundle corresponding to the standard representation $\R^{p+1,q+1}$ of the conformal principal group 
$O(p+1,q+1)$. Specifically, $\T$ has rank $n+2$ and 
for any choice of scale, it is identified with the direct sum 
\begin{equation*}
\T=\E[1]\oplus\E^a[-1]\oplus\E[-1]
\end{equation*}
whose components change under the conformal rescaling as 
\begin{equation}
\pmat{\wh\si \\ \wh\mu^a \\ \wh\rho}=\pmat{\si \\ \mu^a+\Up^a\si \\ \rho-\Up_c\mu^c-\frac12\Up_c\Up^c\,\si}.
\label{transtr}
\end{equation}
Here $\Up_a$ is the 1-form corresponding to the change of scale as before.
Note that the projecting (or primary) slot is the top one.
The bundle $\T$ is endowed with  the \emph{standard tractor connection} $\tnabla$, the linear connection that is given by
\begin{equation*}
\tnabla_a \pmat{\si \\ \mu^b \\ \rho} = \pmat{\na_a\si-\mu_a \\ \na_a\mu^b+\de_a{}^b\rho+\Rho_a{}^b\si \\ \na_a\rho-\Rho_{ac}\mu^c}.
\end{equation*}
It follows this definition is indeed conformally invariant.
The bundle $\T$ also carries the \emph{standard tractor metric}, the bundle metric of signature $(p+1,q+1)$ that is schematically represented as
\begin{equation*}
\pmat{0&0&1\\0&\mathbf{g}_{ab}&0\\1&0&0},
\end{equation*}
i.e., for any sections $\form U=(\si,\mu^a,\rho)$ and $\form V=(\tau,\nu^a,\pi)$ of $\T$, 
\begin{equation*}
\g{\form U}{\form V}=\mu_a\nu^a+\si\pi+\rho\tau.
\end{equation*}
It follows the standard tractor metric is parallel with respect to the standard tractor connection.

\section{Relative conformal invariants}\label{Relative}
Throughout this section we consider conformal structures of positive definite signature.
The discussion for space- and time-like curves in indefinite signature shows only minor differences, the null case is more involved, see section \ref{Indefinite}.
In the first two subsections we only slightly extend the setting of \cite[section~2.8]{Bailey1994}.
Then we introduce a natural family of relative conformal invariants and the notion of conformal arc-length parameter of curve.

\subsection{Canonical lift and initial relations}\label{lift}
In the positive definite signature, any vector is space-like.
Hence, for a regular smooth curve $\Ga\subset M$ with a fixed parametrization $t$, 
the density \eqref{uu} becomes $u=\sqrt{U_cU^c}$ and it is nowhere vanishing.
This provides a lift of $\Ga$ to the standard tractor bundle $\T$ that is given by 
\begin{equation}
\form T:=\pmat{0\\0\\u^{-1}}.
\label{T}
\end{equation}
Along the curve we use the notation $\ddt{}:=U^c\tnabla_c$ and 
\begin{equation}
\form U:=\ddt{}\form T,\qquad
\dform U:=\ddt{2}\form T,\qquad
\dots,\qquad
\diform{U}{i}:=\ddt{i+1}\form T
\label{UA}
\end{equation}
for the derived tractors.
The notation is chosen so that the highest order term in the middle slot of $\diform{U}{i}$ is a multiple of $\dif{U}{i}^a$.
Note that the tractor $\diform{U}{i}$ has order $i+2$ (with respect to $\Ga$).
By construction, both the tractor lift $\form T$ and all tractors in \eqref{UA} are conformally invariant objects.
Explicitly, the first two derived tractors are
\begin{align}
\form U&=\pmat{0 \\ u^{-1}U^a \\ -u^{-3}U_cU'^c}, \label{U} \\
\dform U&=\pmat{-u \\ u^{-1}U'^a-2u^{-3} U_cU'^c\,U^a \\ -u^{-3}U_cU''^c-u^{-3}U'_cU'^c+3u^{-5}(U_cU'^c)^2-u^{-1}\Rho_{cd}U^cU^d}. \label{A}
\end{align}

The tractors $\form T,\form U,\dform U$ are linearly independent, and satisfy
\begin{equation}
\begin{matrix}
\g{\form T}{\form T}=0,&\qquad\g{\form T}{\form U}=0,&\qquad\g{\form T}{\dform U}=-1,\\
&\qquad\g{\form U}{\form U}=1,&\qquad\!\g{\form U}{\dform U}=0.\hfill
\end{matrix}
\label{starting}
\end{equation}
The first nontrivial identity is  
\begin{equation}
\g{\dform U}{\dform U} = 3u^{-2}U'_cU'^c +2u^{-2}U_cU''^c -6u^{-4}(U_cU'^c)^2 +2\Rho_{cd}U^cU^d.
\label{AA}
\end{equation}
In order to simplify expressions, we will use the following notation 
\begin{equation*}
\al:=\g{\dform U}{\dform U},\qquad
\be:=\g{\ddform U}{\ddform U},\qquad
\ga:=\g{\dddform U}{\dddform U},\qquad
\text{etc.}
\end{equation*}
Initial relations above and their consequences may be schematically indicated by the Gram matrix which, for the sequence 
$(\form T,\form U,\dform U,\ddform U,\dddform U)$, has the form
\begin{equation}
\pmat{
0&0&-1&0&\al\\[0.5ex]
0&1&0&-\al&-\tfrac32 \al'\\[0.5ex]
-1&0&\al&\tfrac12 \al'&\tfrac12 \al''-\be\\[0.5ex]
0&-\al&\tfrac12 \al'&\be&\tfrac12 \be'\\[0.5ex]
\al&-\tfrac32 \al'&\tfrac12 \al''-\be&\tfrac12 \be'& \ga}.
\label{starting-gram}
\end{equation}
Clearly, the generating rule may be written as
\begin{align*}
\g{\diform{U}{i}}{\diform{U}{i+j+1}}=\big(\g{\diform{U}{i}}{\diform{U}{i+j}}\big)'-\g{\diform{U}{i+1}}{\diform{U}{i+j}}.
\end{align*}

Later we will need some details on the next derived tractor $\ddform U$.
As a consequence of $\g{\form T}{\ddform U}=0$, the projecting slot of $\ddform U$ must vanish.
With the help of \eqref{AA}, the middle slot of $\ddform U$ may be expressed as 
\begin{equation*}
u^{-1}U''^a -3u^{-3}U_cU'^c\,U'^a +\left(\tfrac32u^{-3}U'_cU'^c -\tfrac32u^{-1}(\g{\dform U}{\dform U}) +2u^{-1}U^cU^d\Rho_{cd}\right)U^a -uU^c\Rho_c{}^a.
\end{equation*}

\subsection{Reparametrizations} \label{repa}
Let $\ti t=g(t)$ be a reparametrization of the curve $\Ga$.
All objects related to the new parameter $\ti t$ will be denoted by tildes in accord with $\dd{\ti t}{}=g'^{-1}\ddt{}$, where $g'=\frac{dg}{dt}$.
In particular, $\wt U^a=g'^{-1} U^a$, $\wt u=g'^{-1}u$ and $\tform T=g'\form T$. 
Using just the chain rule and the Leibniz rule, one easily verifies that 
\begin{align}
\begin{split}
\tform U&=\form U+g'^{-1}{g''}\form T, \\
\tdform U&=g'^{-1}\dform U+g'^{-2}g''\form U+(g'^{-2}g'''-g'^{-3}g''^2)\form T, \\
\tddform U&=g'^{-2}\ddform U+2g'^{-2}\cS(g)\form U+g'^{-2}\cS(g)'\ \form T,
\label{Utilde}
\end{split}
\end{align}
where $\cS$ is the \emph{Schwarzian derivative},
\begin{equation*}
\cS(g):=\frac{g'''}{g'}-\frac32\left(\frac{g''}{g'}\right)^2
=\left( \ln g' \right)''-\frac12\left( \ln g' \right)'^2.
\end{equation*}

According to the starting relations \eqref{starting}, we have
\begin{align}
\begin{split}
\g{\tdform U}{\tdform U} &=g'^{-2}\left(\g{\dform U}{\dform U} -2 \cS(g)\right), \\
\g{\tddform U}{\tddform U} &=g'^{-4}\left(\g{\ddform U}{\ddform U} -4\cS(g)\g{\dform U}{\dform U} +4\cS(g)^2\right).
\end{split}
\label{tiAA}
\end{align}
Hence vanishing of $\g{\dform U}{\dform U}$ determines a natural projective structure on any curve, cf. \cite[Proposition 2.11]{Bailey1994}:

\begin{proposition}[\cite{Bailey1994}]\label{prop-repa}
The equation $\g{\dform U}{\dform U}=0$, regarded as a condition on the parametrization of a curve, 
determines a preferred family of parametrizations with freedom given by the projective group of the line.
\end{proposition}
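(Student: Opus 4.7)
The proof proposal rests entirely on the transformation formula already derived, namely the first equation of \eqref{tiAA},
\begin{equation*}
\g{\tdform U}{\tdform U} = g'^{-2}\bigl(\g{\dform U}{\dform U} -2\cS(g)\bigr).
\end{equation*}
First I would read off from this that the quantity $\g{\dform U}{\dform U}$ behaves under reparametrization like a projective structure on the parameter line: the transformation rule is affine-linear in $\cS(g)$, so the condition $\g{\dform U}{\dform U}=0$ is preserved in passing from $t$ to $\ti t=g(t)$ if and only if $\cS(g)=0$.

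The second step is the classical characterization of the kernel of the Schwarzian: $\cS(g)\equiv 0$ exactly when $g(t)=\frac{at+b}{ct+d}$ with $ad-bc\neq 0$, i.e.\ exactly when $g$ belongs to the projective group $PGL(2,\R)$ of the line. Together with the first step this gives the stated uniqueness: any two parametrizations in which $\g{\dform U}{\dform U}$ vanishes differ by such a Möbius transformation, and conversely every Möbius reparametrization preserves the condition.

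For existence of at least one such parametrization from an arbitrary initial one, I would solve the Schwarzian ODE $\cS(g)=\tfrac12\g{\dform U}{\dform U}$ locally in $t$. The standard linearization $w=g'^{-1/2}$ turns this equation into the linear second-order ODE $w''+\tfrac14\g{\dform U}{\dform U}\,w=0$, and $g$ is recovered as the ratio $y_1/y_2$ of two linearly independent solutions with unit Wronskian; local existence is guaranteed by the classical theory of linear ODEs, and the Wronskian condition ensures $g'\neq 0$, so $g$ is a genuine diffeomorphism of an interval.

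The only point requiring real attention is the algebraic bookkeeping in step one, namely that the Schwarzian really is the obstruction appearing in \eqref{tiAA}; but this has already been verified. Beyond that, steps two and three are routine applications of standard facts about the Schwarzian derivative, so there is no substantive obstacle.
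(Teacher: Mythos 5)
Your proposal is correct and follows exactly the route the paper intends: the key computation is the transformation rule \eqref{tiAA}, after which the paper simply cites \cite{Bailey1994}, leaving implicit the standard facts about the Schwarzian that you spell out (its kernel being the M\"obius group, and local solvability of $\cS(g)=\tfrac12\,\g{\dform U}{\dform U}$ via the linearization $w''+\tfrac14\g{\dform U}{\dform U}\,w=0$). The details you supply, including the sign and the coefficient $\tfrac14$ in the linearized ODE, check out.
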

Any parameter from this family will be called \emph{projective}.

From \eqref{tiAA} it further follows that 
\begin{align*}
\g{\tddform U}{\tddform U}-(\g{\tdform U}{\tdform U})^2=g'^{-4}\left(\g{\ddform U}{\ddform U}-(\g{\dform U}{\dform U})^2\right).
\end{align*}
Hence the function 
\begin{equation}
\Ph:=\g{\ddform U}{\ddform U}-(\g{\dform U}{\dform U})^2 =\be-\al^2
\label{th4}
\end{equation}
is a relative conformal invariant of the curve.

\subsection{Relative conformal invariants} \label{relative}
In general, a \emph{relative conformal invariant of weight $k$} of the curve $\Ga$ is 
a conformally invariant function $I:\Ga\to\R$ that transforms under a reparametrization $\ti t=g(t)$ of the curve as 
\begin{equation*}
\wt I=g'^{-k}I.
\end{equation*}
In particular,  vanishing, respectively nonvanishing, of any relative invariant is independent on reparametrizations.
Conformal invariants of weight 0 are the \textit{absolute} invariants.

Let us denote 
\begin{align}
\De_i:=\det\left(\operatorname{Gram}\left(\form T,\form U,\dform U,\dots,\diform{U}{i-2}\right)\right),
\label{gram}
\end{align}
the determinant of the Gram matrix corresponding to the first $i$ tractors from the derived sequence $\form T,\form U,\dform U,\dots$.
From \eqref{starting} it is obvious that $\De_1=\De_2=0$ and $\De_3=-1$ independently of the curve and its parametrization.
These three determinants are thus absolute, but trivial conformal invariants.
The first nontrivial invariant is $\De_4$.
From \eqref{starting-gram} it follows that 
\begin{align}
\De_4=-\be+\al^2=-\Ph.
\label{De4}
\end{align}
In general, we have

\begin{lemma}\label{lem-gram}
For $i=4,\dots,n+2$, the Gram determinant $\De_i$ is a relative conformal invariant of weight $i(i-3)$.
\end{lemma}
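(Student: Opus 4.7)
The plan is to combine two observations: conformal invariance is essentially free, and the reparametrization behavior is governed by a lower triangular change-of-basis whose determinant is easy to pin down.

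First I would note that conformal invariance of $\Delta_i$ is immediate. Indeed, both the canonical lift $\form T$ in \eqref{T} and all derived tractors $\diform{U}{k} = \ddt{k+1} \form T$ from \eqref{UA} are conformally invariant by construction, as are the standard tractor metric and hence its Gram determinants. So the only nontrivial content is the transformation law under a reparametrization $\tilde t = g(t)$.

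Next I would extend the pattern in \eqref{Utilde} by induction. Setting $\form V_0 := \form T$ and $\form V_k := \diform{U}{k-1}$ for $k \ge 1$, I claim
\begin{equation*}
\tform V_k = g'^{-(k-1)} \form V_k + \sum_{j<k} c_{kj}\,\form V_j \qquad (k\ge 1), \qquad \tform V_0 = g'\,\form V_0,
\end{equation*}
for smooth functions $c_{kj}$. The base case is just the first two lines of \eqref{Utilde}. For the induction step, apply $\dd{\tilde t}{} = g'^{-1}\ddt{}$ to $\tform V_k$ and use Leibniz: the leading term produces $g'^{-1}\bigl(g'^{-(k-1)}\bigr)\diform{U}{k} = g'^{-k}\form V_{k+1}$ while differentiation of the correction terms stays within $\operatorname{span}(\form V_0,\dots,\form V_{k+1})$, since $\ddt{}\form V_j = \form V_{j+1}$ (for $j\ge 0$). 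This is the main computational step; the only subtlety is to keep track that derivatives of the lower coefficients do not spill over into higher tractors, which is automatic because $\ddt{}$ raises the index by one.

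With this structure in hand, the transformation from $(\form V_0,\dots,\form V_{i-1})$ to $(\tform V_0,\dots,\tform V_{i-1})$ is encoded by a lower triangular matrix $A$ with diagonal entries
\begin{equation*}
g',\ 1,\ g'^{-1},\ g'^{-2},\ \dots,\ g'^{-(i-2)}.
\end{equation*}
Since the Gram matrix transforms as $\tilde G = A G A^T$, we obtain $\tilde\Delta_i = (\det A)^2\,\Delta_i$.

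Finally, a direct computation gives
\begin{equation*}
\det A = g'^{\,1 - (1+2+\cdots+(i-2))} = g'^{\,1 - (i-1)(i-2)/2},
\end{equation*}
so $(\det A)^2 = g'^{\,2 - (i-1)(i-2)} = g'^{-i(i-3)}$. Comparing with $\tilde I = g'^{-k} I$ for an invariant of weight $k$ yields $k = i(i-3)$, as claimed. The cases $i=4,5$ can be cross-checked against the explicit identities in \eqref{th4} and \eqref{De4}. The only real obstacle in the whole argument is verifying the inductive triangular structure cleanly; once that is in place, the weight drops out as a telescoping arithmetic sum.
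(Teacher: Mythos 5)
Your proposal is correct and follows essentially the same route as the paper: establish that $\tdiform{U}{j}=g'^{-j}\diform{U}{j}$ modulo the span of the lower-order tractors (the paper asserts this as a generalization of \eqref{Utilde}; you supply the induction), then read off $\wt\De_i=(\det A)^2\De_i=g'^{-i(i-3)}\De_i$ from the triangular change of basis. Your exponent bookkeeping, $2-(i-1)(i-2)=-i(i-3)$, matches the paper's computation.
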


\begin{proof}
As a generalization of \eqref{Utilde} we have 
\begin{align*}
\tdiform{U}{j}= g'^{-j}\diform{U}{j} \mod\spanb{\form T,\form U,\dform U,\dots,\diform{U}{j-1}},
\end{align*}
for any $j=1,\dots,n$.
From this and properties of the determinant it follows that \eqref{gram} changes under the reparametrization as
\begin{align*}
\wt\De_i=g'^{-2\cdot2}\cdots g'^{-2(i-2)} \De_i =g'^{-i(i-3)} \De_i,
\end{align*}
for any $i=4,\dots,n+2$.
\end{proof}

For $i=4,\dots,n+2$, the tractor metric restricted to $\spanb{\form T,\form U,\dots,\diform{U}{i-2}}$ is nondegenerate, thus  vanishing of $\De_i$ is equivalent to the fact that the determining tractors are linearly dependent.
In particular, for $i\ge4$, vanishing of $\De_i$ implies vanishing of $\De_{i+1}$.
Note also that the weight of any $\De_i$ is even, thus its sign is independent of all reparametrizations.

\subsection{Conformal arc-length}\label{arc}
By \eqref{th4}, respectively \eqref{De4}, we defined a nontrivial relative conformal invariant of the lowest possible weight and order.
It is actually nonnegative:

\begin{lemma}\label{Ph}
$\Ph\ge0$.
\end{lemma}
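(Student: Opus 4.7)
The plan is to interpret $\Phi=-\Delta_4$ via the signature of the tractor metric. In positive definite signature, the conformal standard tractor bundle carries a parallel metric of signature $(n+1,1)$, so the bundle metric is Lorentzian with exactly one negative direction. Hence on any linear subspace of $\T$ the restricted form has at most one negative eigenvalue, and consequently the Gram determinant of any four vectors has sign dictated solely by whether the restricted form is positive definite, Lorentzian, or degenerate.

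Next I would use the initial relations \eqref{starting} to pin down the signature on the relevant $4$-dimensional subspace $V:=\spann{\form T,\form U,\dform U,\ddform U}$. The $3$-plane $W:=\spann{\form T,\form U,\dform U}$ has Gram matrix
\[
\pmat{0&0&-1\\0&1&0\\-1&0&\al},
\]
whose determinant equals $-1$, so $W$ is non-degenerate of signature $(2,1)$. Therefore $V\supset W$ inherits at least two positive and one negative direction, and since the ambient signature forbids more than one negative direction, $V$ is either $W$ itself (if $\ddform U\in W$) or a genuine $4$-plane of signature $(3,1)$.

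In the first case $\Delta_4=0$ because the Gram matrix of $(\form T,\form U,\dform U,\ddform U)$ has a linearly dependent row, and in the second case the Gram matrix is a non-degenerate symmetric matrix with exactly one negative eigenvalue, hence has strictly negative determinant. In either case $\Delta_4\le 0$, and by \eqref{De4} this is equivalent to $\Phi=\be-\al^2\ge 0$.

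There is essentially no obstacle beyond recording these two observations; the one point to be careful about is the sign convention on $\Delta_4$, which is fixed by the computation \eqref{De4} already done in the excerpt. Equality $\Phi=0$ occurs precisely when $\ddform U$ lies in the span of $\form T,\form U,\dform U$, a fact that will be useful later for comparing conformal arc-length with the projective parametrization and characterizing conformal circles.
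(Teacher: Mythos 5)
Your argument is correct, but it is a genuinely different route from the paper's. The paper fixes a projective parametrization (so $\al=\g{\dform U}{\dform U}=0$, hence $\Ph=\g{\ddform U}{\ddform U}$), observes that the top slot of $\ddform U$ vanishes so that $\g{\ddform U}{\ddform U}$ is the Riemannian norm squared of its middle slot, and then invokes the even weight of $\Ph$ to remove the choice of parametrization. You instead read $\Ph=-\De_4$ off the signature $(n+1,1)$ of the tractor metric: since $\spanb{\form T,\form U,\dform U}$ is nondegenerate of signature $(2,1)$ and the ambient admits only one negative direction, the four-dimensional span (when $\ddform U$ is independent) must have signature $(3,1)$ and hence negative Gram determinant, while linear dependence gives zero. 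One small point of rigor: a four-plane containing a $(2,1)$-subspace could a priori also carry a degenerate restricted form; here that is excluded because the orthocomplement of $\spanb{\form T,\form U,\dform U}$ is positive definite, and in any case a degenerate restriction would give $\De_4=0$, so your conclusion is unaffected. Your approach buys something the paper's does not: it needs no choice of special parametrization, it identifies the equality case $\Ph=0$ with linear dependence of $\form T,\form U,\dform U,\ddform U$ (which the paper uses later for Proposition \ref{prop-geod}), and it makes transparent why the lemma fails in indefinite signature (Remark \ref{indef2}), where the extra negative directions allow the four-plane to have signature $(2,2)$ and hence positive determinant. The paper's proof is shorter and more elementary, relying only on the explicit slot structure of $\ddform U$.
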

\begin{proof}
For a parametrization belonging to the projective family of Proposition \ref{prop-repa} we have $\Ph=\g{\ddform U}{\ddform U}$.
Since the top slot of $\ddform U$ vanishes, the previous expression equals to the norm squared of the middle slot of $\ddform U$ and so it is nonnegative.
Since the weight of $\Ph$ is even, it is a nonnegative function independently of the parametrization of the curve.
\end{proof}

Suppose that $\Ga$ is a curve with nowhere vanishing $\Ph$.
Then 
\begin{equation*}
ds:=\sqrt[4]{\Ph(t)}\,dt
\end{equation*}
is a well-defined conformally invariant 1-form along the curve whose integration yields a distinguished parametrization of the curve; it is given uniquely up to an additive constant.
Such parameter is called the \emph{conformal arc-length}.
Note that if $s$ is the conformal arc-length then $\Ph(s)=1$.

It will be clear from later alternative expressions that this is the same distinguished parameter as one finds in literature, e.g. in \cite{Fialkow1942} or \cite{Cairns1994}.
In the later reference, one also finds a notion of \textit{vertex}, which is the point of curve where $\Ph=0$.
The vertices of curves are clearly invariant under conformal transformations.
Generic curves are vertex-free, the opposite extreme is discussed in the next section.

\begin{remark}\label{indef2}
For space- and time-like curves in the case of general indefinite signature, 
the relative conformal invariant $\Ph$ defined by \eqref{th4}, respectively \eqref{De4}, is not necessarily nonnegative as in Lemma \ref{Ph}.
It may even happen that it vanishes although the corresponding tractors $\form T,\form U,\dform U,\ddform U$ are linearly independent.
The notion of vertex in such cases would rather be defined by the linear dependence of these tractors than by vanishing of $\Ph$, cf. Remark \ref{indef3}(2).
\end{remark}

\section{Conformal circles and conserved quantities}\label{Geod}
We continue the exposition with the assumption of positive definite signature.
The only difference in indefinite signature concerns the notion of conformal circles that are defined just for the space- or time-like directions.
Reasonable analogies for null directions are discussed in subsection \ref{null-geod}.

\subsection{Conformal circles}\label{geod}
In this subsection we consider the curves for which $\Ph$ vanishes identically, i.e. the curves consisting only of vertices.
The conformal arc-length is not defined for such curves and they are excluded from the discussion in the main part of this paper as the most degenerate cases.
However, these curves form a very distinguished family of curves that coincide with the so-called \textit{conformal circles}, the curves which are in some sense the closest conformal analogues of usual geodesics on Riemannian manifolds.
In the flat case, they are just the circles, respectively straight lines, i.e. the conformal images of Euclidean geodesics.

Conformal circles are well known and studied
(under various nicknames%
\footnote{%
E.g. they are called conformal null curves in \cite{Fialkow1942} or conformal geodesics in  \cite{Friedrich1987} and \cite{Herzlich2012}.
Note that the conformal geodesics studied in \cite{Fialkow1939} or \cite{Musso1994} form a different, more general, class of curves.%
})
in the literature.
A quick survey with an explanation of the relationship of the definition below to distinguished curves of Cartan's conformal connection can be found in \cite[section~5.1]{Herzlich2012}.
Conformal circles on a general conformal manifold can also be related to ordinary circles in the flat model via the notion of Cartan's development of curves.

Here we adapt the notation of \cite{Bailey1990}, where \textit{conformal circles} are defined as the solutions to the system of conformally invariant third order ODE's 
\begin{equation}
U''^a = 3u^{-2}U_cU'^c\,U'^a-\tfrac32u^{-2}U'_cU'^c\,U^a+u^2U^c\Rho_c{}^a-2U^cU^d\Rho_{cd}\,U^a.
\label{BE5}
\end{equation}
Contracting, respectively skew symmetrizing \eqref{BE5}, with $U^a$ yields the following pair of equations equivalent to \eqref{BE5}:
\begin{align}
U_aU''^a &= 3u^{-2}(U_cU'^c)^2-\tfrac32U'_cU'^c-u^2 U^cU^d\Rho_{cd},
\label{BE6} \\
U_{[a}U''_{b]} &= 3u^{-2}U_cU'^c\,U_{[a}U'_{b]}+u^2U^c\Rho_{c[b}U_{a]}.
\label{BE7}
\end{align}
From \eqref{transacc} we know that, for any curve with an arbitrary parametrization, a metric in the conformal class may be chosen so that it is an affinely parametrized geodesic of the corresponding Levi-Civita connection, i.e. $U'^a=0$.
It is then easy to see from \eqref{BE6} and \eqref{BE7} that 

\begin{proposition}[\cite{Bailey1990}]\label{prop-ge}
The curve is a conformal circle if and only if there is a metric in the conformal class such that the curve is an affinely parametrized geodesic and $U^c\Rho_{ca}=0$.
\end{proposition}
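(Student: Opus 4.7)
The plan is to exploit the conformal invariance of the ODE \eqref{BE5} together with its equivalent pair \eqref{BE6}--\eqref{BE7}, and to choose a scale that simplifies the curve to an affinely parametrized geodesic. Since the curve is space-like (positive definite signature), the discussion following \eqref{transacc} guarantees the existence of a scale in the conformal class in which $U'^a = 0$, and in this scale the equations reduce to manageable algebraic conditions on $\Rho_{ab}$.

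For the easy direction, I would start from a scale in which $U'^a=0$ and $U^c\Rho_{ca}=0$ hold simultaneously. Substituting $U'^a=0$ kills the first two terms on the right-hand side of \eqref{BE5}, while the remaining two terms vanish because they involve contractions of $U^c$ with $\Rho_{ca}$. Thus $U''^a = 0 = (\text{RHS})$ in this particular scale, i.e.\ \eqref{BE5} is satisfied there. Since \eqref{BE5} is a conformally invariant condition, it then holds in every scale and with respect to any parametrization, so $\Ga$ is a conformal circle.

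For the converse, suppose $\Ga$ satisfies \eqref{BE5} (equivalently, \eqref{BE6} and \eqref{BE7}). By \eqref{transacc} one can choose a metric $g_{ab}$ in the conformal class so that $U'^a = 0$, i.e.\ $\Ga$ is an affinely parametrized geodesic of its Levi-Civita connection. In this scale \eqref{BE6} reduces to
\begin{equation*}
0 = -u^2\, U^c U^d \Rho_{cd},
\end{equation*}
so that $U^cU^d\Rho_{cd}=0$, while \eqref{BE7} reduces to
\begin{equation*}
0 = u^2\, U^c \Rho_{c[b} U_{a]}.
\end{equation*}
Contracting the latter with $U^a$ and using $U_aU^a=u^2\ne0$ together with $U^cU^d\Rho_{cd}=0$, I would extract $U^c\Rho_{cb}=0$. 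This establishes the existence of a scale with both properties.

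The main obstacle, such as it is, is purely algebraic bookkeeping in the converse direction: one must make sure that the two scalar/vector consequences \eqref{BE6} and \eqref{BE7} really do combine to pin down the full vector identity $U^c\Rho_{ca}=0$, and that this uses only $U_aU^a\neq 0$ (which is automatic here). No further subtlety arises because the chosen scale fixes $U'^a$ to vanish identically along $\Ga$, so all auxiliary terms disappear cleanly.
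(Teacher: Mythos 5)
Your argument is correct and takes essentially the same route as the paper, which likewise passes to a scale in which $U'^a=0$ via \eqref{transacc} and then reads the claim off \eqref{BE6} and \eqref{BE7}, exactly as you spell out in detail. The only slip is the parenthetical claim that \eqref{BE5} then holds ``with respect to any parametrization'' --- it does not (only \eqref{BE7} is reparametrization invariant, while \eqref{BE6} singles out the projective parameters) --- but you never use this, so the proof stands.
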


From the explicit expressions in subsection \ref{lift} we may read the following tractorial interpretations of the  equations above:
The equation \eqref{BE5} is equivalent to vanishing of the middle slot of the tractor 
$\ddform U+\tfrac32(\g{\dform U}{\dform U})\,\form U$
(since the primary slot is zero, this is indeed a conformally invariant condition).
This implies that the middle slots of tractors $\ddform U$ and $\form U$ are collinear, which is just the condition \eqref{BE7}.
In particular, this condition is independent of any reparametrization, i.e.
solutions to \eqref{BE7} are the {conformal circles} parametrized by an arbitrary parameter.
The equation \eqref{BE6} is equivalent to $\g{\dform U}{\dform U}=0$. 
In particular, solutions to \eqref{BE6} correspond to the projective parameters of the curve.
Considering further the injecting slot of the tractor $\ddform U$, it is shown in \cite[Proposition 2.12]{Bailey1994} that 

\begin{proposition}[\cite{Bailey1994}]\label{prop-geo}
The curve is a projectively parametrized conformal circle if and only if it obeys
\begin{equation}
\g{\dform U}{\dform U}=0 \qquad\text{and}\qquad \ddform U=0.
\label{AAB}
\end{equation}
\end{proposition}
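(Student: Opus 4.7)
The plan is to analyze the three slots of $\ddform U$ one by one, using the explicit formulas from subsection \ref{lift} in combination with the scalar identities encoded in the Gram matrix \eqref{starting-gram}. This slot-by-slot examination will simultaneously yield both directions of the equivalence without needing any separate argument.

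The top slot of $\ddform U$ vanishes universally: it is exactly the content of $\g{\form T}{\ddform U}=0$ read off from \eqref{starting-gram}, and it holds on any curve without any hypothesis on the parametrization or on being a conformal circle. The middle slot is handled by the observation already recorded in subsection \ref{lift}, namely that vanishing of the middle slot of $\ddform U + \tfrac32\al\form U$ is equivalent to the conformal circle equation \eqref{BE5}, with $\al=\g{\dform U}{\dform U}$. Under the projective-parameter hypothesis $\al=0$ supplied by Proposition \ref{prop-repa}, the correction term drops, so the middle slot of $\ddform U$ itself vanishes if and only if $\Ga$ is a conformal circle. For the bottom slot I would exploit the Gram identity $\g{\dform U}{\ddform U}=\tfrac12\al'$ from \eqref{starting-gram}. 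A projective parametrization forces $\al\equiv 0$ along $\Ga$, hence $\al'\equiv 0$, so this pairing vanishes. With the top and middle slots of $\ddform U$ already killed, the expansion of $\g{\dform U}{\ddform U}$ via the explicit form \eqref{A} of $\dform U$ (whose top slot is $-u$) reduces to $-u$ times the bottom slot of $\ddform U$; since $u$ is nowhere zero, that slot must vanish as well.

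Assembling these pieces: if $\Ga$ is a projectively parametrized conformal circle, then $\al=0$ (projective), the middle slot of $\ddform U$ vanishes (conformal circle), the top slot vanishes automatically, and the bottom slot vanishes by the Gram argument, so $\ddform U=0$. Conversely, $\g{\dform U}{\dform U}=0$ is by definition the projective-parameter condition, and $\ddform U=0$ has a vanishing middle slot, which together with $\al=0$ is equivalent to \eqref{BE5}. The only subtlety, rather than a genuine obstacle, is the passage from the pointwise identity $\al=0$ to its derivative $\al'=0$ used to kill the bottom slot; this is harmless precisely because a projective parametrization makes $\al$ vanish identically along $\Ga$, not just at a single point, so differentiation poses no problem and the argument closes.
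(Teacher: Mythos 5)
Your proposal is correct and follows exactly the route the paper sketches: the paper does not spell out a proof of this proposition (it defers to \cite[Proposition~2.12]{Bailey1994}), but the preceding paragraph sets up precisely your slot-by-slot argument --- top slot vanishes identically via $\g{\form T}{\ddform U}=0$, the middle slot of $\ddform U+\tfrac32(\g{\dform U}{\dform U})\form U$ encodes \eqref{BE5}, and ``considering further the injecting slot'' closes the argument. Your completion of that last step, killing the bottom slot via $\g{\dform U}{\ddform U}=\tfrac12\al'=0$ together with the nonvanishing top slot $-u$ of $\dform U$, is sound and is the natural way to finish.
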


We may easily generalize this statement as follows:

\begin{proposition}\label{prop-geod}
The following conditions are equivalent:
\begin{enumerate}[(a)]
\item the curve is a conformal circle (with and arbitrary parametrization),
\item all its points are vertices, i.e. 
\begin{equation*}
\Ph=\g{\ddform U}{\ddform U}-(\g{\dform U}{\dform U})^2=0,
\end{equation*}
\item the rank 3 subbundle $\spanb{\form T,\form U,\dform U}\subset\T$ is parallel along the curve.
\end{enumerate}
\end{proposition}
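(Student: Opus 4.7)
My plan is to reduce all three conditions to the single equation $\ddform U = 0$ in a well-chosen parametrization. Each of (a), (b), (c) is reparametrization-independent: for (a) this is clear from the definition (or from the equivalent parametrization-free equation \eqref{BE7}); for (b), $\Phi$ is a relative conformal invariant of weight $4\neq 0$ by Lemma \ref{lem-gram}, so its vanishing is independent of the parameter; for (c), the property of a subbundle being parallel along a curve is intrinsic. By Proposition \ref{prop-repa} I may therefore pass to a projective parametrization, in which $\alpha := \g{\dform U}{\dform U}$ vanishes identically on $\Gamma$, and so does $\alpha'$.

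The crucial observation is that in this parametrization $\ddform U$ becomes orthogonal to the whole rank-$3$ subbundle $\spann{\form T, \form U, \dform U}$. Indeed, from the Gram matrix \eqref{starting-gram} one reads off $\g{\form T}{\ddform U} = 0$, $\g{\form U}{\ddform U} = -\alpha = 0$, and $\g{\dform U}{\ddform U} = \tfrac12\alpha' = 0$. Moreover, the induced Gram matrix on $\spann{\form T, \form U, \dform U}$ becomes $\pmat{0 & 0 & -1 \\ 0 & 1 & 0 \\ -1 & 0 & 0}$, which has signature $(2,1)$; since the tractor metric has signature $(n+1,1)$ in the positive definite case, the orthogonal complement inherits a positive definite metric of rank $n-1$.

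With these preparations, each of the three conditions collapses to $\ddform U = 0$ in the projective parametrization. For (a), this is Proposition \ref{prop-geo}. For (b), $\Phi = \g{\ddform U}{\ddform U} - \alpha^2$ reduces to $\g{\ddform U}{\ddform U}$, and since $\ddform U$ lies in a positive definite subbundle its norm vanishes iff $\ddform U = 0$. For (c), parallelism of the subbundle along $\Gamma$ is equivalent to $\ddform U = \tnabla_U\dform U$ belonging to $\spann{\form T, \form U, \dform U}$ (the two lower derivatives $\tnabla_U\form T = \form U$ and $\tnabla_U\form U = \dform U$ trivially already lie there); combined with the orthogonality established above, this forces $\ddform U$ into the intersection of a non-degenerate subspace with its complement, hence $\ddform U = 0$. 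The main delicate point is the positive definiteness of the complement, which is the sole place where the positive definite signature hypothesis genuinely enters—Remark \ref{indef2} already anticipates that precisely this step breaks down in indefinite signature, so a direct parametrization-free argument would obscure the role of signature.
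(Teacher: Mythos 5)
Your proof is correct, but it is organized differently from the paper's. The paper works in an arbitrary parametrization throughout: it gets (b)$\Leftrightarrow$(c) by observing that $\Ph=-\De_4$ vanishes iff the four tractors $\form T,\form U,\dform U,\ddform U$ are linearly dependent (using nondegeneracy of the relevant spans), and it gets (c)$\Leftrightarrow$(a) by a direct slot analysis --- since only $\dform U$ has a nonzero projecting slot and $\form T$ has zero middle slot, $\ddform U\in\spanb{\form T,\form U,\dform U}$ forces the middle slots of $\ddform U$ and $\form U$ to be collinear, which is exactly the parametrization-free conformal circle equation \eqref{BE7}. You instead normalize to a projective parametrization and funnel all three conditions through the single equation $\ddform U=0$, using the orthogonality of $\ddform U$ to $\spanb{\form T,\form U,\dform U}$ (read off from \eqref{starting-gram} with $\al=\al'=0$) together with nondegeneracy of that span for (c), positive definiteness of its complement for (b), and Proposition \ref{prop-geo} as a black box for (a). Both arguments place the signature hypothesis in the same spot (the (b) equivalence), consistent with Remark \ref{indef3}(2); your version makes that localization more explicit and is pleasantly uniform, while the paper's version is self-contained (it does not lean on Proposition \ref{prop-geo}), never privileges a parameter, and exhibits the tensorial equation \eqref{BE7} directly, which makes the signature-independence of (a)$\Leftrightarrow$(c) manifest. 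The only loose end worth making explicit in your write-up is that the subbundle $\spanb{\form T,\form U,\dform U}$ is itself unchanged under reparametrization (immediate from \eqref{Utilde}), so that condition (c) really is intrinsic before you fix the projective parameter.
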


\begin{proof}
From subsection \ref{lift} we know that the tractors  $\form T,\form U,\dform U$ are linearly independent for any curve and its parametrization.
From subsection \ref{relative} we know that $\Ph=0$ is equivalent to the fact that the tractors $\form T,\form U,\dform U,\ddform U$ are linearly dependent, i.e. $\ddform U$ belongs to the subbundle $\spanb{\form T,\form U,\dform U}\subset\T$ which is therefore parallel. 
Thus (b) and (c) are equivalent.

Among tractors $\form T,\form U,\dform U,\ddform U$, only $\dform U$ has nonzero projecting slot.
Hence the previous condition means that $\ddform U$ is a linear combination of $\form T$ and $\form U$.
Since the middle slot of $\form T$ vanishes, the previous is equivalent to the middle slots of $\ddform U$ and $\form U$ being collinear, which is just the condition \eqref{BE7}.
Thus (c) and (a) are equivalent.
\end{proof}

\begin{remarks}\label{indef3}
(1)
Note the two conditions in \eqref{AAB} are not independent: 
if $\ddform U=0$ then $\g{\dform U}{\dform U}=0$.

(2)
Conformal circles in the case of indefinite signature exist only for space- or time-like directions.
They have all of the above stated properties except the condition (b) of Proposition \ref{prop-geod}, 
which is no more equivalent to the others, cf.\ Remark \ref{indef2}.
\end{remarks}

\subsection{Conserved quantities}\label{conserve}
As an application of the current approach we describe some conserved quantities (or first integrals) along conformal circles for some specific conformal structures. Here we consider conformal structures admitting almost Einstein scales or conformal 
Killing fields with some additional  property.
The description of the conserved quantities is easily given due to the tractorial characterization of these additional data, which can be found e.g. in \cite{GoEinst}.

An \textit{almost Einstein scale} is a section $\si\in\E[1]$ satisfying the conformally invariant condition that the trace-free part of 
$\na_a\na_b\si+\Rho_{ab}\si\in\E_{ab}[1]$ vanishes.  
Then $\si$ is nonvanishing on an open dense set where the metric $g_{ab}=\si^{-2}\mathbf{g}_{ab}$ is Einstein.
The tractorial characterization of this fact is  that the tractor field $L^{\T}(\si)\in\T$ is parallel, where 
$L^{\T}:\E[1]\to\T$ denotes the BGG splitting operator. 

\begin{proposition}\label{prop-ein}
Let $\Ga$ be a conformal circle with a projective parameter $t$. 
Assume the conformal manifold $M$ admits an almost Einstein scale $\si\in\E[1]$ and let $\form S=L^{\T}(\si)\in\T$ be the corresponding parallel tractor.
Then the function $\frak s:=  \g{\dform U}{\form S}$ is a conserved quantity along $\Ga$, i.e.\ $\frac{d}{dt} \frak s=0$.
\end{proposition}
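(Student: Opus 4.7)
The plan is to show $\tfrac{d}{dt}\mathfrak s=0$ by direct computation, exploiting two properties: the tractor metric $\g{\cdot}{\cdot}$ is parallel with respect to $\tnabla$, and the two ingredients $\dform U$ and $\form S$ each satisfy a strong annihilation property along $\Gamma$ in the given setting.

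First, I would expand $\tfrac{d}{dt}\mathfrak s$ using the Leibniz rule. Because $\tfrac{d}{dt}=U^c\tnabla_c$ and the standard tractor metric is parallel (as noted at the end of the Notation subsection), one has
\begin{equation*}
\frac{d}{dt}\mathfrak s = \frac{d}{dt}\g{\dform U}{\form S} = \g{\ddform U}{\form S}+\g{\dform U}{\tnabla_U\form S}.
\end{equation*}

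Next, I would dispatch each of the two terms. The second term vanishes immediately because $\form S=L^{\T}(\sigma)$ is parallel by the tractorial characterization of almost Einstein scales, so $\tnabla_a\form S=0$ and in particular $\tnabla_U\form S=0$. For the first term I invoke the hypothesis that $\Gamma$ is a conformal circle with a projective parameter $t$: Proposition \ref{prop-geo} says exactly that, under this combined condition, $\ddform U=0$. Hence $\g{\ddform U}{\form S}=0$ as well, and we conclude $\tfrac{d}{dt}\mathfrak s=0$, as desired.

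There is essentially no obstacle here; the statement is an immediate consequence of two facts already established in the excerpt (parallelism of $\form S$ and $\ddform U=0$ for projectively parametrized conformal circles) together with the compatibility of $\tnabla$ with $\g{\cdot}{\cdot}$. The only thing to double-check is that the parametrization assumption is used at the right place, namely to guarantee $\ddform U=0$ rather than merely $\g{\dform U}{\dform U}=0$; without the projective parameter one would only get the weaker conclusion that the middle slot of $\ddform U$ is collinear with that of $\form U$, which would not suffice to kill $\g{\ddform U}{\form S}$ in general.
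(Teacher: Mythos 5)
Your proof is correct and follows exactly the paper's argument: both terms of the Leibniz expansion vanish because $\ddform U=0$ for a projectively parametrized conformal circle (Proposition \ref{prop-geo}) and $\form S$ is parallel. Your closing remark about where the projective-parameter hypothesis enters is accurate and a useful clarification, but the route is the same.
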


\begin{proof}
A projectively parametrized circle satisfies $\ddform U=0$, 
the tractor $\form S$ corresponding  to an almost Einstein scale satisfies $\dform S=0$.
Thus $\ddt{}(\g{\dform U}{\form S})=0$.
\end{proof}

\smallskip
A \textit{conformal Killing field} $k^a\in\E^a$ is an infinitesimal symmetry of the conformal structure.
This is equivalent to the vanishing of the trace-free part of $\na_{(a}k_{b)}\in\E_{(ab)}[2]$.
The tractorial characterization of this fact is  that the tractor field $\form K:=L^{\La^2\T}(k^a)\in\La^2\T$ satisfies 
\begin{align}
\tnabla_a\form K = k^c\bs\Om_{ca}, 
\label{Omk}
\end{align}
where $L^{\La^2\T}:\E^a\to\La^2\T$ denotes the BGG splitting operator and $\bs\Om_{cd}\in\E_{[cd]}\otimes\La^2\T$ is the curvature of the tractor connection $\tnabla$. 
Further, along the given curve, 
we introduce the 1-form 
\begin{align}
\ell_a := U^b \bs\Om_{ab}(\form U, \form U').
\label{ell}
\end{align}

\begin{proposition}\label{prop-ckf}
Let $\Ga$ be a conformal circle with a projective parameter $t$. 
Assume the conformal manifold $M$ admits a conformal Killing field $k^a\in\E^a$ satisfying $k^a\ell_a=0$, and let $\form K:=L^{\La^2\T}(k^a)\in\La^2\T$ be the corresponding tractor field.
Then the function $\frak k := \form K(\form U,\dform U)$ is a conserved quantity along $\Ga$, i.e.\ $\frac{d}{dt} \frak k=0$.
\end{proposition}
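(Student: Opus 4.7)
The plan is to differentiate $\mathfrak{k} = \form K(\form U,\dform U)$ directly along $\Gamma$ using the Leibniz rule for the tractor connection and to observe that all but one term vanishes for structural reasons, while the surviving term equals $k^a\ell_a$, and thus is zero by hypothesis.

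First I would write
\begin{equation*}
\ddt{}\mathfrak{k} = (\ddt{}\form K)(\form U,\dform U) + \form K(\ddt{}\form U,\dform U) + \form K(\form U,\ddt{}\dform U).
\end{equation*}
The middle term vanishes immediately: by the definition \eqref{UA} of the derived tractors we have $\ddt{}\form U = \dform U$, and since $\form K \in \Lambda^2\T$ is skew, $\form K(\dform U,\dform U) = 0$. The last term also vanishes, since $t$ is a projective parameter along the conformal circle $\Gamma$ and Proposition \ref{prop-geo} gives $\ddt{}\dform U = \ddform U = 0$.

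For the surviving first term I would substitute the tractorial characterization \eqref{Omk} of conformal Killing fields to obtain $\ddt{}\form K = U^a\tnabla_a\form K = U^a k^c\bs\Omega_{ca}$. Evaluating on $(\form U,\dform U)$ and relabelling dummy indices yields
\begin{equation*}
(\ddt{}\form K)(\form U,\dform U) = U^a k^c\,\bs\Omega_{ca}(\form U,\dform U) = k^a\bigl(U^b\,\bs\Omega_{ab}(\form U,\dform U)\bigr) = k^a\ell_a,
\end{equation*}
which vanishes by hypothesis.

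The computation is essentially mechanical; the one point requiring care is the index bookkeeping in the last display, since one must match the orientation of $\bs\Omega_{ab}$ between \eqref{Omk} and the definition \eqref{ell} of $\ell_a$. No genuine obstacle arises: the whole proposition is a three-line consequence of $\ddform U = 0$ along projectively parametrized circles, the skew-symmetry of $\form K$, and the BGG deformation identity \eqref{Omk}.
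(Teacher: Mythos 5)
Your proposal is correct and follows exactly the paper's own argument: Leibniz rule, the vanishing of $\ddform U$ for a projectively parametrized conformal circle, the skew-symmetry of $\form K$ to kill the term $\form K(\dform U,\dform U)$, and the identification of the remaining term with $k^a\ell_a$ via \eqref{Omk}. Nothing to add.
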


\begin{proof}
A projectively parametrized circle satisfies $\ddform U=0$. 
The tractor $\form K$ corresponding  to $k^a$ satisfies \eqref{Omk} and hence 
$(\frac{d}{dt} \form K)(\form U,\dform U) = k^a\ell_a =0$.
Also, $\form K$ is skew, thus $\ddt{}(\form K(\form U,\dform U)) = 0$.
\end{proof}

Note that the assumption $k^a\ell_a=0$ in the Proposition is automatically satisfied for \textit{normal}  conformal Killing fields which are characterized by the vanishing of \eqref{Omk}.

\begin{remarks}
(1)
It is somewhat tedious to expand tractorial formulas for the conserved quantities $\frak s$ and $\frak k$.
To proceed, one needs the respective splitting operators and some manipulation.
From the explicit expressions it in particular follows that the considered quantities are nontrivial.
All that can be found already in \cite{Snell2015}.
Note also that both $\frak s$ and $\frak k$ are quantities of order 2, while the conformal circle equation is of order 3.

(2)
Expanding \eqref{ell} yields
\begin{align*}
\ell_a = U^b\sfW_{abcd} U^cU'{}^d - 2u^2 U^b\na_{[a} \Rho_{b]c}U^c,
\end{align*}
where $\sfW_{ab}{}^c{}_d$ is the conformal Weyl tensor.
Remarkably, the condition $\ell_a=0$ plays a role in the variational approach to conformal circles, see \cite[p.~218]{Bailey1990}.
\end{remarks}

\section{Absolute conformal invariants}\label{Absolute}

For a generic curve, the relative conformal invariants $\De_i$ from section \ref{Relative} are all nontrivial, provided that $4\le i\le n+2$.
One may therefore easily build a galaxy of absolute conformal invariants.
In this section we construct a  minimal set of such invariants, which is done in a natural if not canonical way.
In order to make the construction complete, the assumption of positive definite signature is important here.
In particular, the standard tractor metric has signature $(n+1,1)$.

\subsection{Tractor Frenet formulas and conformal curvatures}\label{frenet}
Let $\Ga $ be a generic curve, let $s$ be its conformal arc-length parameter and let $\form T, \form U=\dds{}\form T, \dform U=\dds{}\form U, \dots$ be the corresponding tractors as above.
The restriction of the tractor metric to the subbundle $\spanb{\form T,\form U,\dform U}$ is nondegenerate and has signature $(2,1)$.
Thus its orthogonal subbundle in $\T$ is complementary and has  positive definite signature.
We are going to transform the initial tractor frame $(\form T, \form U, \dform U, \dots)$ into a natural pseudo-orthonormal one.
Firstly, under the transformation
\begin{equation}
\form U_0:=\form T,\qquad
\form U_1:=\form U,\qquad
\form U_{2}:=-\dform U-\tfrac12(\g{\dform U}{\dform U})\,\form T,
\label{U0}
\end{equation}
the initial relations \eqref{starting} transform to
\begin{equation}
\begin{matrix}
\g{\form U_0}{\form U_0}=0,&\qquad\g{\form U_0}{\form U_1}=0,&\qquad\hfill\g{\form U_0}{\form U_{2}}=1,\\
&\qquad\g{\form U_1}{\form U_1}=1,&\hfill\g{\form U_1}{\form U_{2}}=0,\\
&&\qquad\g{\form U_{2}}{\form U_{2}}=0.
\end{matrix}
\label{newrel}
\end{equation}
Secondly, by the standard orthonormalization process we may complete this triple to a tractor frame 
\begin{equation}
(\form U_0, \form U_1, \form U_2; \form U_3, \dots,\form U_{n+1})
\label{trfr}
\end{equation}
such that $\form U_i$ belongs to the span $\spanb{\form T,\form U,\dots,\diform{U}{i-1}}$, for all admissible $i$, and the corresponding Gram matrix is
\begin{equation}
\mbox{\footnotesize{$
\left(
\begin{BMAT}(@)[3pt]{ccc0ccc}{ccc0ccc}
&&1&&& \\ 
&1&&&& \\ 
1&&&&& \\
&&&1&& \\
&&&&\ddots& \\
&&&&&1 \\
\end{BMAT}
\right).
$}}
\label{grU}
\end{equation}
More concretely, for any $i=3,\dots,n$, the frame tractors are
\begin{equation}
\begin{gathered}
\form U_i=\form V_i/\sqrt{\g{\form V_i}{\form V_i}},
\qquad \text{where} \\
\form V_i=\diform{U}{i-1}-(\g{\diform{U}{i-1}}{\form U_{i-1}})\,\form U_{i-1}-\cdots 
\hskip3cm \\ \hskip1cm
\cdots-(\g{\diform{U}{i-1}}{\form U_0})\,\form U_{2}-(\g{\diform{U}{i-1}}{\form U_1})\,\form U_1-(\g{\diform{U}{i-1}}{\form U_{2}})\,\form U_0. 
\label{Ui}
\end{gathered}
\end{equation}
The last tractor $\form U_{n+1}$ is determined by the orthocomplement in $\T$ to the codimension one subbundle 
$\spanb{\form T,\dots,\diform{U}{n-1}}=\spanb{\form U_0,\dots,\form U_n}$ 
up to orientation.
We do not need to fix the orientation now, it is done only later.
The frame \eqref{trfr} is called the \emph{tractor Frenet frame} associated to the curve~$\Ga$.

\smallskip
Now, differentiating the constituents of the tractor Frenet frame with respect to the conformal arc-length $s$ and expressing the result within that frame yields Frenet-like identities.
As in the classical situation, 
it follows that the coefficients of that system determine the generating set of absolute invariants of the curve.
The pseudo-orthonormality of the tractor Frenet frame implies a lot of symmetries among these coefficients.

Just from \eqref{U0} we have
\begin{align}
\dform U_0&=\form U_1, \label{U0'} \\
\dform U_1&=K_1\form U_0-\form U_{2}, \label{U1'}
\qquad\text{where}\qquad
K_1:=-\tfrac12\g{\dform U}{\dform U}
\end{align}
is the first nontrivial coefficient, the \textit{first conformal curvature} of the curve.
In general, for any admissible $i$, the derivative $\dform U_i$ is a linear combination of tractors $\form U_0,\form U_1,\dots,\form U_{i+1}$, namely,
\begin{align*}
\dform U_i=(\g{\dform U_i}{\form U_2})\,\form U_0+(\g{\dform U_i}{\form U_1})\,\form U_1+(\g{\dform U_i}{\form U_0})\,\form U_2+(\g{\dform U_i}{\form U_3})\,\form U_3+\dots+(\g{\dform U_i}{\form U_{i+1}})\,\form U_{i+1},
\end{align*}
where the coefficients satisfy $\g{\dform U_i}{\form U_j}=-\g{\form U_i}{\dform U_j}$ and they have to vanish if $i=j$ or $j\ge i+2$.
In particular, from \eqref{U0'} we may read $\g{\dform U_0}{\form U_1}=1$ and $\g{\dform U_0}{\form U_j}=0$, for $j=0,2,\dots,n+1$.
Similarly, from \eqref{U1'} we further read $\g{\dform U_1}{\form U_2}=K_1$ and $\g{\dform U_1}{\form U_j}=0$, for $j=1,3,\dots,n+1$.
Substituting these facts into the previous display, for $i=2$, we see that the only unknown coefficient is the one of $\form U_3$, i.e. $\g{\dform U_2}{\form U_3}$.
It however follows that this coefficient is constant so that we have

\begin{lemma}\label{lem-frenet}
\begin{align}
\dform U_2=-K_1\form U_1-{\form U_3}.
\label{U2'}
\end{align}
\end{lemma}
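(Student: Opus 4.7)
The plan is to expand $\dform U_2$ in the tractor Frenet frame and determine its four coefficients. Since $\form U_2\in\spanb{\form T,\form U,\dform U}=\spanb{\form U_0,\form U_1,\form U_2}$, differentiation places $\dform U_2$ in the one-step larger span $\spanb{\form U_0,\form U_1,\form U_2,\form U_3}$, and using the Gram matrix \eqref{grU} the expansion reads
\[
\dform U_2=\g{\dform U_2}{\form U_2}\,\form U_0+\g{\dform U_2}{\form U_1}\,\form U_1+\g{\dform U_2}{\form U_0}\,\form U_2+\g{\dform U_2}{\form U_3}\,\form U_3.
\]

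Three of the four coefficients are handled exactly as in the discussion preceding the lemma. Differentiating each of the identities $\g{\form U_2}{\form U_2}=0$, $\g{\form U_2}{\form U_1}=0$ and $\g{\form U_2}{\form U_0}=1$ from \eqref{newrel} gives the skew relations $\g{\dform U_2}{\form U_j}=-\g{\form U_2}{\dform U_j}$ for $j=0,1,2$. Substituting $\dform U_0=\form U_1$ from \eqref{U0'} and $\dform U_1=K_1\form U_0-\form U_2$ from \eqref{U1'} and using \eqref{newrel} again immediately yields the first three coefficients as $0$, $-K_1$ and $0$, respectively.

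The remaining task is to prove $c:=\g{\dform U_2}{\form U_3}=-1$. Here I would substitute the explicit form $\form U_2=-\dform U+K_1\form U_0$ from \eqref{U0} (using $K_1=-\tfrac12\g{\dform U}{\dform U}$) and differentiate to obtain $\dform U_2=-\ddform U+K_1'\form U_0+K_1\form U_1$. Reading the Gram matrix \eqref{starting-gram} gives $\g{\ddform U}{\form U_0}=0$, $\g{\ddform U}{\form U_1}=-\al=2K_1$ and $\g{\ddform U}{\form U_2}=-\tfrac12\al'=K_1'$, so the tractor $\form V_3$ appearing in \eqref{Ui} reduces to $\form V_3=\ddform U-K_1'\form U_0-2K_1\form U_1$, and a direct substitution collapses to the clean identity
\[
\dform U_2=-\form V_3-K_1\form U_1.
\]
Therefore $c\,\form U_3=-\form V_3$, so it only remains to show $\g{\form V_3}{\form V_3}=1$.

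This is the point at which the conformal arc-length normalisation enters decisively. Expanding $\g{\form V_3}{\form V_3}$ bilinearly with the pairings just computed and the relations \eqref{newrel} collapses to $\be-4K_1^2=\be-\al^2=\Ph$, and this equals $1$ by the very definition of the conformal arc-length $s$. Hence $\form V_3=\form U_3$, $c=-1$, and the asserted identity $\dform U_2=-K_1\form U_1-\form U_3$ follows. The main obstacle is tracking signs through $K_1=-\tfrac12\al$ and confirming that the normalisation $\Ph=1$ is exactly what makes $\form V_3$ a unit tractor; the conceptual point is that the nontrivial ingredient beyond the skew-symmetry argument for the coefficients of $\form U_0,\form U_1,\form U_2$ is the use of the fundamental invariant $\Ph$.
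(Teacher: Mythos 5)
Your proposal is correct and follows essentially the same route as the paper: the coefficients of $\form U_0,\form U_1,\form U_2$ come from the skew relations exactly as in the discussion preceding the lemma, and the coefficient of $\form U_3$ is pinned down by computing $\form V_3$, verifying $\g{\form V_3}{\form V_3}=\Ph=1$ from the conformal arc-length normalisation, and concluding $\g{\dform U_2}{\form U_3}=-1$. The only cosmetic difference is that you expand $\g{\form V_3}{\form V_3}$ fully bilinearly where the paper shortcuts via $\g{\ddform U}{\form U}=-\g{\dform U}{\dform U}$.
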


\begin{proof}
From \eqref{Ui}, according to \eqref{U0} and \eqref{starting}, we obtain
\begin{align*}
\form V_3
=\ddform U-(\g{\ddform U}{\form U})\,\form U+(\g{\ddform U}{\dform U})\,\form T.
\end{align*}
Using $\g{\ddform U}{\form U}=-\g{\dform U}{\dform U}$, which is  a consequence of $\g{\form U}{\dform U}=0$, we obtain
\begin{equation*}
\g{\form V_3}{\form V_3}=\g{\ddform U}{\ddform U}-(\g{\dform U}{\dform U})^2=\Ph=1
\end{equation*}
and thus  $\form U_3=\form V_3$. 
From \eqref{U0} and \eqref{Ui} we know that $\dform U_2$ equals to $-\ddform U \mod\spanb{\form T,\form U}$, respectively $-\form V_3\mod\spanb{\form U_0,\form U_1}$.
Hence $\g{\dform U_2}{\form U_3}=-\g{\form V_3}{\form U_3}=-1$.
\end{proof}

Following the same ideas as before, one gradually and easily obtains
\begin{align}
  \begin{split}
    &\dform U_3=\form U_0+K_2\form U_4,\\
    &\dform U_i=-K_{i-2}\form U_{i-1}+K_{i-1}\form U_{i+1},\qquad\text{for}\ i=4,\dots,n,\\
    &\dform U_{n+1}=-K_{n-1}\form U_{n},
    \label{trfrfor}
  \end{split}
\end{align}
where $K_2,\dots,K_{n-1}$ are the \emph{higher conformal curvatures} of the curve.
The bunch of equations \eqref{U0'}, \eqref{U1'}, \eqref{U2'} and \eqref{trfrfor} form the \textit{tractor Frenet equations} associated to the curve.
Schematically they may be written as
\begin{equation}
\mbox{\footnotesize{$
\left(
\begin{BMAT}(@)[1.2pt]{c}{ccc0cccc}
\dform U_0\\ \dform U_1\\ \dform U_2\\ \dform U_3\\ \dform U_4\\ \vdots\\  \dform U_{n+1}
\end{BMAT}
\right)
=
\left(
\begin{BMAT}(@)[1.5pt]{ccc0cccc}{ccc0ccccc}
&1&&&&& \\
K_1&&-1&&&& \\
&-K_1&&-1&&& \\
1&&&&\ \ K_2&& \\
&&&-K_2&&& \\
&&&&\ddots&\ \ \ \ddots&\\ 
&&&&&&K_{n-1} \\
&&&&&-K_{n-1}& \\
\end{BMAT}
\right)
\cdot
\left(
\begin{BMAT}(@)[1.9pt]{c}{ccc0cccc}
\form U_0\\ \form U_1\\ \form U_2\\ \form U_3\\ \form U_4\\ \vdots\\ \form U_{n+1}
\end{BMAT}
\right)
$}}.
\label{trfrform}
\end{equation}

Altogether, we summarize as follows:

\begin{proposition}
The system of tractor Frenet equations determines a generating set of absolute conformal invariants of the curve.
On a conformal Riemannian manifold of dimension $n$, it consists of $n-1$ conformal curvatures that are expressed,
with respect to the conformal arc-length parametrization, 
as
\begin{align}
K_i=
\begin{cases}
\g{\dform U_1}{\form U_{2}},&\qquad\text{for}\ i=1, \\
\g{\dform U_{i+1}}{\form U_{i+2}},&\qquad\text{for}\ i=2,\dots,n-1. 
\end{cases}
\label{Kie}
\end{align}
\end{proposition}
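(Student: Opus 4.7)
The plan is to establish three claims: the explicit formulas \eqref{Kie}, absolute conformal invariance of each $K_i$, and the generating property. Since the construction has already been carried out in deriving the Frenet system, the proof becomes mostly a bookkeeping exercise.

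For the formulas, I would pair $\dform U_{i+1}$ with $\form U_{i+2}$ and invoke the orthonormality encoded in \eqref{grU}: only the $K_i$-term survives. For $i=1$ this uses \eqref{U1'} together with $\g{\form U_0}{\form U_2}=1$ and $\g{\form U_2}{\form U_2}=0$; for $i\geq 2$ the appropriate line of \eqref{trfrfor} (with the relation $\dform U_3 = \form U_0 + K_2\form U_4$ appearing as a mild special case) delivers $K_i$ directly.

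For absolute invariance I would observe that every ingredient in the construction is conformally invariant: the tractor lift $\form T$ of \eqref{T}, the arc-length parameter $s$ from subsection \ref{arc}, hence the derived tractors $\diform{U}{i}$, and the Gram--Schmidt recipe \eqref{Ui}. The frame \eqref{trfr} is thereby a conformal invariant of the curve (up only to a sign in $\form U_{n+1}$, fixed by an orientation choice), and so are the tractor-metric pairings yielding the $K_i$. The generating property is then a standard rigidity argument: the linear ODE \eqref{trfrform} with coefficients $K_1,\dots,K_{n-1}$ and an initial pseudo-orthonormal tractor frame uniquely determines the frame along $\Ga$, and the primary slot of $\form U_0 = \form T$ recovers the curve itself; any conformal invariant along $\Ga$ is therefore polynomially expressible in the $K_i$ and their $s$-derivatives via \eqref{Ui}.

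The main obstacle is the bookkeeping at the endpoint equations \eqref{U0'}, \eqref{U1'}, \eqref{U2'} and the first line of \eqref{trfrfor}: one has to confirm that the mild irregularity of those lines, relative to the uniform recursive pattern valid for $4 \leq i \leq n$, does not spoil formula \eqref{Kie} or the invariance of the construction at the initial end of the frame.
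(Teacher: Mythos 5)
Your proposal is correct and follows essentially the same route as the paper, which states this proposition as a summary of the preceding construction: the formulas \eqref{Kie} are read off from \eqref{U1'} and \eqref{trfrfor} using the pairings \eqref{newrel} and \eqref{grU} exactly as you describe, invariance is inherited from the conformal invariance of the derived tractors and the arc-length, and the generating property is the standard Frenet rigidity argument that the paper likewise leaves implicit. Your explicit check of the irregular initial equations \eqref{U0'}--\eqref{U2'} and the $i=3$ line is the right bookkeeping and matches Lemma \ref{lem-frenet} and the surrounding derivation.
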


In \eqref{U1'} we have an alternative expression of the first conformal curvature in terms of initial tractors.
Thus, as a consequence of Proposition \ref{prop-repa} and the ensuing definition, we obtain an interpretation of that invariant:

\begin{proposition}\label{prop-K1}
The conformal arc-length parameter belongs to the projective family of parameters if and only if $K_1=0$.
\end{proposition}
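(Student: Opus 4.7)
The plan is to read the statement off directly from the definition of $K_1$ together with Proposition \ref{prop-repa}. The construction of the tractor Frenet frame is carried out with respect to the conformal arc-length parameter $s$, and equation \eqref{U1'} defines the first conformal curvature by
\[
K_1=-\tfrac12\g{\dform U}{\dform U},
\]
where the derivatives are taken with respect to $s$. On the other hand, Proposition \ref{prop-repa} characterizes the projective family of parametrizations precisely by the vanishing of $\g{\dform U}{\dform U}$ with respect to the chosen parameter. Putting the two facts together, the conformal arc-length $s$ lies in the projective family if and only if $\g{\dform U}{\dform U}=0$ with the primes computed in $s$, which is equivalent to $K_1=0$.

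Thus the proof consists of a single observation and requires no further computation; there is no real obstacle. The only thing worth noting is a consistency check: the conformal arc-length is defined only under the assumption $\Ph\neq0$, and this remains compatible with $\g{\dform U}{\dform U}=0$ because in that case $\Ph=\g{\ddform U}{\ddform U}$, which may certainly still be nonzero. So the ``if'' and ``only if'' directions are both meaningful, and the equivalence $K_1=0 \Leftrightarrow \g{\dform U}{\dform U}=0$ (in the arc-length parameter) gives the claim.
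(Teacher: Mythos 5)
Your proposal is correct and follows exactly the paper's own reasoning: the paper derives the statement as an immediate consequence of the identity $K_1=-\tfrac12\g{\dform U}{\dform U}$ from \eqref{U1'} (computed in the arc-length parameter) together with the characterization of projective parameters in Proposition \ref{prop-repa}. The consistency check about $\Ph\neq0$ being compatible with $\g{\dform U}{\dform U}=0$ is a nice extra observation but is not needed for the equivalence itself.
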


\begin{remarks} \label{rem-frenet}
(1)
The matrix in \eqref{trfrform} is skew with respect to the inner product corresponding to \eqref{grU}.
A change of basis leads to a different matrix realization, 
e.g., just a permutation of the frame tractors leads to following rewrittening of \eqref{trfrform}:
\begin{equation*}
\mbox{\footnotesize{$
\left(
\begin{BMAT}(@)[1.2pt]{c}{c0ccccc0c}
\dform U_0\\ \dform U_1\\ \dform U_3\\ \vdots\\ \dform U_n\\ \dform U_{n+1}\\  \dform U_{2}
\end{BMAT}
\right)
=
\left(
\begin{BMAT}(@)[1.5pt]{c0cccccc0c}{c0cccccc0c}
&1&&&&&&\\ 
K_1&&&&&&&-1\\ 
1&&&\ \ K_2&&&&\\ 
&&-K_2&&&&&\\ 
&&&\ddots&&\ \ \ddots&&\\ 
&&&&&&K_{n-1}&\\
&&&&&-K_{n-1}&&\\
&-K_1&-1&&&&&
\end{BMAT}
\right)
\cdot
\left(
\begin{BMAT}(@)[1.9pt]{c}{c0ccccc0c}
\form U_0\\ \form U_1\\ \form U_3\\ \vdots\\ \form U_n\\ \form U_{n+1}\\  \form U_{2}
\end{BMAT}
\right)
$}}.
\end{equation*}
This block decomposition reflects the standard grading of the Lie algebra $\frak{so}(n+1,1)$ with the parabolic subalgebra corresponding to the block lower 
triangular matrices.
This choice is very close to the ones in \cite{Sulanke1981} or \cite{Cairns1994}.

(2)
For generic curves, the $(n+2)$-tuple of derived tractors is linearly independent.
For specific curves, this is not the case and 
we cannot build the full tractor Frenet frame and all tractor Frenet formulas.
One may however mimic the previous procedure as long as the tractors are independent:

If, say, the tractors $\form T, \form U, \dform U, \dots,\diform{U}{i}$ are linearly independent and $\diform{U}{i+1}$ belongs to their span, then they form a parallel subbundle in $\T$ of rank $i+2$ with the corresponding orthonormal tractors $\form U_0,\form U_1,\form U_2,\dots,\form U_{i+1}$.
This yields a part of the tractor Frenet formulas above, in which the first $i-1$ conformal curvatures occur; the remaining ones may be considered to be zero.
(In this vein, conformal circles may be considered as the most degenerate curves whose all conformal curvatures vanish, cf. Proposition \ref{prop-geod}.)
In the flat case, this means that the curve itself is contained in an $i$-sphere, respectively $i$-plane, i.e. in the  conformal image of an $i$-dimensional Euclidean subspace.
See also \cite[section 12]{Fialkow1942} for further details.

(3)\label{indef4}
For space- and time-like curves in the case of indefinite signature, we remark the following:
The same transformations as in \eqref{U0} lead to the relations \eqref{newrel} where the type of curve is reflected just in the sign of the middle element, hence also in the signature of the  subbundle $\spanb{\form T,\form U,\dform U}$ in $\T$.
The orthogonal complement of this subbundle in $\T$ has in general indefinite signature.
Thus, the tractors $\form V_i$ from \eqref{Ui} may have any sign, including zero. 
If the sign is negative, we just adjust the corresponding definition of $\form U_i$
and, accordingly, one sign in the bottom-right block of \eqref{grU} changes.
If the tractor $\form V_i$ is isotropic then we cannot satisfy both the diagonal form of the bottom-right block of \eqref{grU} and the condition $\form U_i\in\spann{\form T,\form U,\dots,\diform{U}{i-1}}$.
Of course, one can always transform given tractors to a pseudo-orthonormal tractor frame, but only with some additional choices. 
The rest remains basically the same, only the tractor Frenet formulas may look different and some of the resulting invariants has to be related to the respective choices.
(Nevertheless, it is clear that the exceptional conformal curvature $K_1$ and its interpretation as in Proposition \ref{prop-K1} are not influenced by these issues.)
These observations leads to a finer type characterization of space- and time-like curves.
Its complete branching structure is manageable only in a concrete dimension and signature.
\end{remarks}

\subsection{Conformal curvatures revised}\label{revise}
In the definition above, the orientation of the last tractor from the tractor Frenet frame  was a matter of choice. 
In what follows we assume the orientation of $\form U_{n+1}$ is chosen so that it belongs to the same half-space as $\diform{U}{n}$ with respect to the hyperplane $\spanb{\form U_0,\dots,\form U_n}$.
Thus, according to  \eqref{Ui}, we have
\begin{align}
  \diform{U}{i} = \form V_{i+1} \mod \spanb{\form U_0,\form U_1,\dots,\form U_{i}}, 
\label{UiVj}
\end{align}
for any $i=3,\dots,n$.
Also, from definitions and the tractor Frenet formulas, one progressively obtains
\begin{gather}
\begin{split}
\form T=\form U_0, \qquad
\form U=\form U_1, \qquad
\dform U=-\form U_{2}+K_1\form U_0, \qquad
\ddform U=\form U_3+2K_1\form U_1+K_1'\form U_0, \qquad
\text{etc.}
\end{split}
\end{gather}
In particular, it holds 
$\ddform U = \form U_3 \mod \spanb{\form U_0,\form U_1,\form U_{2}}$.
Similarly, it follows that
$\dddform U = K_2\form U_4 \mod \spanb{\form U_0,\form U_1,\form U_2,\form U_3}$ and, inductively, 
\begin{align}
  \diform{U}{i} = K_2\cdots K_{i-1}\form U_{i+1} \mod \spanb{\form U_0,\form U_1,\dots,\form U_{i}}, 
\label{UiKj}
\end{align}
for $i=3,\dots,n$.
Altogether, from \eqref{UiVj}, \eqref{UiKj} and $\form U_i=\form V_i/\sqrt{\g{\form V_i}{\form V_i}}$, we obtain 
\begin{align*}
K_2\cdots K_{i-1} =\sqrt{\g{\form V_{i+1}}{\form V_{i+1}}}, 
\end{align*}
which leads to the following conclusion:

\begin{proposition}\label{prop-revise}
With respect to the conformal arc-length parametrization, 
the higher conformal curvatures can be expressed as 
\begin{align}
K_i =\sqrt{\frac{\g{\form V_{i+2}}{\form V_{i+2}}}{\g{\form V_{i+1}}{\form V_{i+1}}}},
\qquad\text{for $i=2,\dots,n-1$}.
\label{KiVj}
\end{align}
\end{proposition}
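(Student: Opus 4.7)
The plan is to derive the stated formula by isolating $K_i$ from the telescoping identity
$$K_2\cdots K_{i-1}=\sqrt{\g{\form V_{i+1}}{\form V_{i+1}}}$$
that is essentially assembled in the paragraph preceding the proposition. Concretely, everything follows by comparing the two descriptions of $\diform{U}{i}$ modulo the flag $\spanb{\form U_0,\dots,\form U_i}$ and then passing from the product of curvatures to the individual factor by dividing consecutive instances.

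First I would verify the inductive identity \eqref{UiKj} by induction on $i$, starting from the already-noted base case $\ddform U = \form U_3 \mod \spanb{\form U_0,\form U_1,\form U_2}$. For the induction step, I would differentiate the induction hypothesis $\diform{U}{i-1} = K_2\cdots K_{i-2}\,\form U_i \mod \spanb{\form U_0,\dots,\form U_{i-1}}$ with respect to the conformal arc-length. Any derivative of a term already lying in $\spanb{\form U_0,\dots,\form U_{i-1}}$ produces at worst $\spanb{\form U_0,\dots,\form U_i}$ after one application of the tractor Frenet equations. Differentiating the leading term, the derivative of the scalar coefficient contributes only to that span, while $\dform U_i = -K_{i-2}\form U_{i-1} + K_{i-1}\form U_{i+1}$ by \eqref{trfrfor}; the first summand is absorbed into the span and the second produces precisely the factor $K_{i-1}$ needed to advance the product.

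Next, I would combine \eqref{UiVj} and \eqref{UiKj}: since $\form V_{i+1}$ is orthogonal to each of $\form U_0,\dots,\form U_i$ by the Gram--Schmidt construction \eqref{Ui}, one has $\form V_{i+1}=\sqrt{\g{\form V_{i+1}}{\form V_{i+1}}}\,\form U_{i+1}$. Equating the two representatives of $\diform{U}{i}$ modulo $\spanb{\form U_0,\dots,\form U_{i}}$ yields the telescoping identity displayed above. Finally, forming the quotient of this identity evaluated at $i+1$ and at $i$ cancels the factors $K_2,\dots,K_{i-1}$ and gives \eqref{KiVj}. The base case $i=2$ is consistent because, as shown in the proof of Lemma \ref{lem-frenet}, $\g{\form V_3}{\form V_3}=\Ph=1$ under the conformal arc-length parametrization, so the quotient reduces correctly to $\sqrt{\g{\form V_4}{\form V_4}}$.

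The only delicate point is the induction establishing \eqref{UiKj}; in particular, one must check that no coefficient from a tractor below $\form U_{i+1}$ sneaks into the leading slot when differentiating through the Frenet system. This is ensured by the band structure of \eqref{trfrform}, where each $\dform U_j$ involves only $\form U_{j-1}$ and $\form U_{j+1}$ (together with low-index corrections from the first few rows that live safely inside the modular span).
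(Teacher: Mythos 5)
Your proposal is correct and follows essentially the same route as the paper: establish \eqref{UiVj} and the inductive identity \eqref{UiKj}, combine them via the normalization $\form U_{i+1}=\form V_{i+1}/\sqrt{\g{\form V_{i+1}}{\form V_{i+1}}}$ to get $K_2\cdots K_{i-1}=\sqrt{\g{\form V_{i+1}}{\form V_{i+1}}}$, and then divide consecutive instances. The only difference is that you spell out the induction behind \eqref{UiKj} (using the band structure of the Frenet system), which the paper leaves implicit.
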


In particular, all these curvatures are positive.
The first conformal curvature $K_1$ does not fit to this uniform description and  its sign may be arbitrary, including zero.

An expansion of \eqref{KiVj}, according to \eqref{U0} and \eqref{Ui}, provides expressions of higher conformal curvatures in terms of initial tractors.
Instead of exploiting this demanding procedure, we are going to use the relative conformal invariants from subsection \ref{relative}.
From the construction of the tractor Frenet frame it follows that 
$\De_{i+1}=\De_i (\g{\form V_i}{\form V_i})$, for all admissible $i$.
Since $\De_3=-1$, we see that all admissible determinants are negative.
Hence we can express $\g{\form V_i}{\form V_i}$ as a quotient of determinants, whose substitution into \eqref{KiVj} yields
\begin{align}
K_i=\frac{\sqrt{\De_{i+1}\De_{i+3}}}{-\De_{i+2}}.
\label{KiDe}
\end{align}

\medskip

Up to this point, everything has been related to the parametrization by the conformal arc-length. 
Now we consider an arbitrary parametrization of the curve.

\begin{theorem} \label{husty-thm}
With respect to an arbitrary parametrization of the curve, 
the higher conformal curvatures can be expressed as 
\begin{align}
K_i=
\frac{\sqrt{\De_{i+1}\De_{i+3}}}{-\De_{i+2}\sqrt[4]{-\De_4}},
\qquad\text{for $i=2,\dots,n-1$}.
\label{Ki}
\end{align}
\end{theorem}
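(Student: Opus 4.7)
The plan is to reduce the claim to the arc-length formula \eqref{KiDe} via a single reparametrization, using Lemma \ref{lem-gram} as the only nontrivial input. Since $ds = \sqrt[4]{\Ph}\,dt = \sqrt[4]{-\De_4}\,dt$ (from \eqref{De4} together with the definition of the conformal arc-length), this single density controls the passage between the arc-length and an arbitrary parameter.

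First I would verify that the right-hand side of \eqref{Ki} is a well-defined absolute conformal invariant. By Lemma \ref{lem-gram}, $\De_j$ has reparametrization weight $j(j-3)$, so
\[
\sqrt{\De_{i+1}\De_{i+3}}\ \text{has weight}\ \tfrac{1}{2}\big((i+1)(i-2)+(i+3)i\big) = i^2+i-1,
\]
while $\De_{i+2}\sqrt[4]{-\De_4}$ has weight $(i+2)(i-1) + 1 = i^2 + i - 1$. The two weights agree, so the quotient is parametrization-independent. Reality of the square roots for generic curves in positive definite signature is ensured by the sign information recorded in subsection \ref{revise}: one has $\De_j<0$ for $4\le j \le n+2$, whence $\De_{i+1}\De_{i+3}>0$, $-\De_{i+2}>0$ and $-\De_4>0$.

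Next I would specialize to the conformal arc-length parametrization. Since $\Ph = 1$ there, one has $-\De_4 = 1$, so the factor $\sqrt[4]{-\De_4}$ disappears and \eqref{Ki} reduces literally to the already-established formula \eqref{KiDe}. Because both sides of \eqref{Ki} are invariant under reparametrization (the left by absoluteness of $K_i$, the right by the weight computation just performed) and they coincide at one parametrization, they must coincide at every parametrization, which proves the theorem. Equivalently, one can substitute $\De_j^{(s)} = (s')^{-j(j-3)}\De_j^{(t)}$ with $s'=\sqrt[4]{-\De_4^{(t)}}$ directly into \eqref{KiDe} and obtain \eqref{Ki} after cancellation.

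I do not expect a genuine obstacle here; the only point demanding care is the arithmetic identity $(i+2)(i-1) - \tfrac{1}{2}\big((i+1)(i-2)+(i+3)i\big) = -1$, which records that, when transported from the arc-length to an arbitrary parameter, the numerator of \eqref{KiDe} acquires exactly one unit of weight in excess of the denominator, and this is precisely compensated by the single factor $\sqrt[4]{-\De_4}$ appearing in the denominator of \eqref{Ki}. No new ideas beyond Lemma \ref{lem-gram} and the definition of the conformal arc-length are required.
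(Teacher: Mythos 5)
Your proposal is correct and follows essentially the same route as the paper: verify via Lemma \ref{lem-gram} that the right-hand side of \eqref{Ki} has reparametrization weight zero, then observe that it reduces to \eqref{KiDe} in the conformal arc-length parametrization (where $-\De_4=\Ph=1$). The sign discussion you include is the same one the paper records just before the theorem, and your closing "direct substitution" remark matches the alternative derivation the paper mentions after its proof.
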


\begin{proof}
According to Lemma \ref{lem-gram}, the weight of the right hand side of \eqref{Ki} is 
\begin{align*}
\tfrac12(i+1)(i-2)+\tfrac12(i+3)i-(i+2)(i-1)-1=0,
\end{align*}
thus it defines an absolute conformal invariant.
With respect to the conformal arc-length parametrization, \eqref{Ki} coincides with \eqref{KiDe}, hence the statement follows.
\end{proof}

Alternatively, the statement is deducible from \eqref{KiDe} by the reparametrization according to subsection \ref{repa} and Lemma \ref{lem-gram}, where $g'=\Ph^{\frac14}$.
That way, we may obtain also an expression of the very first conformal invariant, $K_1$, which does not fit to the just given description:
Its expression with respect to the conformal arc-length is given in \eqref{U1'}, which transforms under reparametrizations according to \eqref{tiAA}.
The substitution of $g'=\Ph^{\frac14}$ and a small computation reveals that

\begin{proposition}
With respect to an arbitrary parametrization of the curve, 
the first conformal curvature can be expressed as
\begin{align}
\begin{split}
K_1
&= -\tfrac12 \Ph^{-\frac12} \left( \g{\dform U}{\dform U} - \tfrac12 (\ln \Ph)'' + \tfrac{1}{16} (\ln \Ph)'{}^2 \right) = \\
&=-\tfrac12\Ph^{-\frac52}\left( \Ph^2\,\g{\dform U}{\dform U}-\tfrac1{2}\Ph\Ph'' + \tfrac{9}{16}\Ph'^2 \right).
\label{K1}
\end{split}
\end{align}
\end{proposition}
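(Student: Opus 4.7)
The plan is to reduce everything to the conformal arc-length expression for $K_1$ and then apply the reparametrization rules established in subsection \ref{repa}. By construction of the tractor Frenet frame, equation \eqref{U1'} gives
\[
K_1 = -\tfrac12\,\g{\dform U}{\dform U}
\]
when $\dform U$ is computed with respect to the conformal arc-length $s$. To obtain a formula valid for a general parameter $t$, I regard the conformal arc-length as a reparametrization $\tilde t = s(t) = g(t)$, so that quantities with tildes in \eqref{tiAA} correspond to $s$-derivatives, while unadorned quantities correspond to $t$-derivatives. By the definition of conformal arc-length in subsection \ref{arc}, we have $g'(t) = \Ph(t)^{1/4}$.

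The first step is to compute the Schwarzian $\cS(g)$ appearing in \eqref{tiAA}. Using the identity $\cS(g) = (\ln g')'' - \tfrac12 (\ln g')'{}^2$ and substituting $\ln g' = \tfrac14 \ln \Ph$, a short computation gives
\[
\cS(g) = \tfrac14 (\ln \Ph)'' - \tfrac{1}{32} (\ln \Ph)'{}^2.
\]
Plugging this together with $g'^{-2} = \Ph^{-1/2}$ into the first relation of \eqref{tiAA}, and then using $K_1 = -\tfrac12 \g{\tdform U}{\tdform U}$, I directly obtain the first displayed form of $K_1$ in \eqref{K1}.

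The second form of \eqref{K1} is merely an algebraic rewriting: substituting $(\ln \Ph)' = \Ph'/\Ph$ and $(\ln \Ph)'' = \Ph''/\Ph - \Ph'^2/\Ph^2$ into the bracket, the coefficient of $\Ph'^2/\Ph^2$ combines as $\tfrac12 + \tfrac1{16} = \tfrac9{16}$, and pulling out an overall $\Ph^{-2}$ yields the factor $\Ph^{-5/2}$ in front.

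The computation is essentially mechanical once the correct direction of reparametrization is fixed; the only mild subtlety — and the main thing to check carefully — is the bookkeeping of which parameter plays the role of the tilde variable in \eqref{tiAA}, since getting this backwards would flip the sign of the Schwarzian contribution. Everything else is routine application of the chain rule and the definition $\Ph = g'^4$.
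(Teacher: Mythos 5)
Your proposal is correct and follows exactly the route the paper indicates: take $K_1=-\tfrac12\,\g{\dform U}{\dform U}$ from \eqref{U1'} in the arc-length parametrization, apply the reparametrization rule \eqref{tiAA} with $g'=\Ph^{1/4}$, compute $\cS(g)=\tfrac14(\ln\Ph)''-\tfrac1{32}(\ln\Ph)'^2$, and rewrite algebraically. The bookkeeping of which variable carries the tilde is handled correctly, and both displayed forms check out.
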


\begin{remarks}\label{rem-K1}
(1)
Not only the conformal curvatures, but the tractor Frenet frame itself can be built with respect to an arbitrary parametrization.
E.g. expressions of  the first three tractors \eqref{U0} transform, under the reparametrization according to \eqref{Utilde} with $g'=\Ph^{\frac14}$, so that 
\begin{align}
\begin{split}
\form U_0 &= \Ph^{\frac14}\, \form T, \\
\form U_1 &= \form U + \tfrac14 (\ln \Ph)'\, \form T, \\
\form U_2 &= - \Ph^{-\frac14} \left( \dform U + \tfrac14 (\ln \Ph)'\, \form U + \left(\tfrac12 \g{\dform U}{\dform U} + \tfrac{1}{32} (\ln \Ph)'{}^2\right) \form T \right).  
\end{split}
\label{U0tilde}
\end{align}

(2)
Our exceptional invariant $K_1$ corresponds to a similarly exceptional invariant $J_{n-1}$ of \cite[section~5]{Fialkow1942} as follows.
For any relative conformal invariant $Q$ of weight 1, an appropriate combination of $Q$ and its derivatives leads to an appropriate transformation of the new quantity under reparametrizations of the curve.
Namely, with the same conventions as yet,
\begin{align*}
2\wt{Q}\wt{Q}''-3\wt{Q}'^2=g'^{-4}\left( 2QQ''-3Q'^2-2\cS(g) Q^2 \right).
\end{align*}
Combining with \eqref{tiAA}, it easily follows that the function 
\begin{align*}
Q^{-2}\,\g{\dform U}{\dform U}-2Q^{-3}Q''+3Q^{-4}Q'^2
\end{align*}
is an absolute conformal invariant of the curve.
Now, the substitution $Q=\Ph^{\frac14}$ leads to an invariant, which differs from our $K_1$, respectively Fialkow's $J_{n-1}$, just by a constant multiple.
(For the comparison with $J_{n-1}$, the formula \eqref{riemth} from the next subsection is needed.)
\end{remarks}

\subsection{Conformal curvatures in terms of Riemannian ones}\label{riemann}
Expanding any of the above expressions for the conformal curvatures yields very concrete and very ugly formulas in terms of the underlying derived vectors.
As a compromise between the explicitness and the ugliness we show how to rewrite the previous formulas in terms of  the Riemannian curvatures of the curve.
This way we will also be able to compare our invariants with their more classical counterparts.

Within this paragraph we consider a generic curve parametrized by  the Riemannian arc-length parameter, according to a chosen scale.
The corresponding Riemannian Frenet frame is denoted by $(e_1, \dots, e_n)$ and the Riemannian Frenet formulas are%
\footnote{Warning: in this subsection we relax the abstract indices, i.e. we write $e_i$ rather than $e_i^a$ etc. 
The raising and lowering indices with respect to the chosen metric is denoted by $\sharp$ and $\flat$, respectively.}
\begin{align*}
e_1'&=\ka_1 e_2,\\
e_i'&=-\ka_{i-1} e_{i-1}+\ka_i e_{i+1},\qquad\text{for}\ i=2,\dots,n-1,\\
e_n'&=-\ka_{n-1} e_{n-1},
\end{align*}
where primes denote the derivative with respect to the Riemannian arc-length and $\ka_1, \dots, \ka_{n-1}$ are the Riemannian curvatures of the curve.
Accordingly, the expressions \eqref{uu} and \eqref{U^a} read as
\begin{equation*}
u=1,\qquad U=e_1,\qquad U'=\ka_1 e_2,\qquad U''=-\ka_1^2 e_1+\ka_1' e_2+\ka_1\ka_2 e_3,\qquad \text{etc.}
\end{equation*}
Following the development of subsection \ref{lift}, the first tractors are 
\begin{equation*}
\begin{gathered}
\form T=\pmat{0\\0\\1},\qquad
\form U=\pmat{0\\e_1\\0},\qquad
\dform U=\pmat{-1\\\ka_1 e_2\\-\Rho(e_1,e_1)},\\
\ddform U=\pmat{0\\-\ka_1^2e_1+\ka_1'e_2+\ka_1\ka_2 e_3-\Rho(e_1,e_1)e_1-\Rho(e_1,-)^\sharp\\ *}.
\end{gathered} 
\end{equation*}
Hence we obtain
\begin{align}
\begin{split}
\g{\dform U}{\dform U}&=\ka_1^2+2\Rho(e_1,e_1),  \\
\Ph=\g{\ddform U}{\ddform U}-(\g{\dform U}{\dform U})^2&=\ka_1'^2+\ka_1^2\ka_2^2
+\text{terms involving $\Rho$}.
\label{riemth}
\end{split}
\end{align}

Substitution of \eqref{riemth} into \eqref{K1} leads to an expression of the first conformal curvature $K_1$ in terms of first two Riemannian curvatures $\ka_1$, $\ka_2$, some terms involving the Schouten tensor and their derivatives.
The full expansion leads to a huge formula even in the flat case.

Formula enthusiasts may continue further in this spirit and express the higher derived tractors.
We add some details on the Gram matrix corresponding to the first five tractors.
It is displayed in \eqref{starting-gram} where, modulo terms involving $\Rho$, 
\begin{equation*}
\begin{gathered}
\al=\ka_1^2, \qquad
\be=\ka_1'^2+\ka_1^4+\ka_1^2\ka_2^2, \\
\ga=9\ka_1^2\ka_1'^2+(\ka_1''-\ka_1^3-\ka_1\ka_2^2)^2+(2\ka_1'\ka_2+\ka_1\ka_2')^2 +\ka_1^2\ka_2^2\ka_3^2.
\end{gathered}
\end{equation*}
Of course, the term $\ka_3$ is nontrivial only if $n\ge4$.
From this and \eqref{Ki} one could deduce an expression for the second conformal curvature $K_2$ in terms of $\ka_1$, $\ka_2$, $\ka_3$, $\Rho$ and their derivatives.
Although the result is expected to be messy, it can be condensed into the following neat form:
\begin{align*}
K_2=\left(\ka_1'^2+\ka_1^2\ka_2^2\right)^{-\frac54} 
\left( (2\ka_1'^2\ka_2+\ka_1^2\ka_2^3+\ka_1\ka_1'\ka_2'-\ka_1\ka_1''\ka_2)^2 +(\ka_1'^2+\ka_1^2\ka_2^2)\ka_1^2\ka_2^2\ka_3^2 \right)^{\frac12},
\end{align*}
modulo terms involving $\Rho$.

\begin{remark}
In the flat case, for $n=3$, the previous display agrees precisely with the invariant which is called \textit{conformal torsion} in \cite{Cairns1994} (the other invariant there corresponds to our $K_1$).
Note that vanishing of $K_2$ in such case is equivalent to $\ka_2=0$, or $\ka_2\ne 0$ and
$\frac{\ka_2}{\ka_1}=\left( \frac{\ka_1'}{\ka_1^2\ka_2} \right)'$.
The former condition means that the curve is planar, the latter one means the curve is spherical.
By virtue of Remark \ref{rem-frenet}(2), this is an expected behaviour.
\end{remark}

For another comparisons let us also describe first few tractors from the tractor Frenet frame constructed in subsection \ref{frenet}.
According to \eqref{U0tilde}, the easy tractors  are
\begin{gather*}
\form U_0=\Ph^{\frac14}\pmat{0\\0\\1},\qquad
\form U_1=\pmat{0\\e_1 \\ \frac14(\ln{\Ph})'},\qquad
\form U_{2}=-\Ph^{-\frac14} \pmat{-1\\ \frac14(\ln\Ph)'e_1+\ka_1 e_2 \\ \frac1{32}(\ln\Ph)'^2+\frac12\ka_1^2},
\end{gather*} 
where $\Ph$ is given in \eqref{riemth}.
The first nontrivial step concerns an expression of $\form U_3$, 
which one calculates as
\begin{gather}
\form U_3=\Ph^{-\frac12}\pmat{0\\\ka_1'e_2+\ka_1\ka_2 e_3+\Rho(e_1,e_1)e_1-\Rho(e_1,-)^\sharp\\ \ka_1\ka_1'}.
\label{riemV3}
\end{gather} 

\begin{remarks}
(1)
Incidentally, the middle slot of the tractor in \eqref{riemV3} corresponds to the starting quantity in Fialkow's approach, the so-called \textit{first conformal normal} of the curve, cf. \cite[equation (4.21)]{Fialkow1942}.
Using this and an invariantly defined derivative along the curve (with respect to the conformal arc-length parameter), the conformal Frenet identities and the corresponding $n-2$ conformal curvatures are deduced in \cite[section~4]{Fialkow1942}.
The last Fialkow's conformal curvature is constructed ad hoc in \cite[section~5]{Fialkow1942}, see the discussion in Remark \ref{rem-K1}(2).

(2)
At this stage we can easily count the orders of individual invariants although some estimates could be done earlier.
The relative conformal invariant $\Ph$ is of order 3, cf. \eqref{riemth}.
From this and \eqref{K1} we see that the first conformal curvature $K_1$ is of order 5.
For $i\ge 2$, an estimate based on \eqref{Ki} yields the order of $i$th conformal curvature $K_i$ is at most $i+3$, but the current expressions show it is actually $i+2$.
\end{remarks}

\section{Null curves in general signature}\label{Indefinite}
The next step  is to consider possible generalizations of the previous treatment to 
conformal manifolds of arbitrary signature.
Thus, in the following we suppose that $M$ is a conformal manifold dimension $n=p+q$ and signature $(p,q)$ and $\T$ is the standard tractor bundle with the tractor metric of signature $(p+1,q+1)$.
Generally, we should distinguish curves according to the type of tangent vectors/tractors and also
according to the type of further tractors in the tractor Frenet frame.
This leads to a diversity of possible cases, cf. Remark \ref{indef4}(3).
The typical situation we need to understand is when the tangent vector together with its several derivatives generate a totally isotropic subspace of the tangent space, cf. the notion of $r$-null curves below.
One can view this as a counterpart of the Riemannian setting discussed so far. 
In the last subsection we discuss the Lorenzian signature in more detail.

\subsection{$r$-null curves}\label{null}
For null curves the density \eqref{uu} vanishes identically and cannot be used for the lift to $\T$.
Depending on the signature, we may consider  curves that are more and more isotropic, without being degenerate.
On a conformal manifold of signature $(p,q)$, for any $r\le\min\{p,q\}$, 
a curve is called the \textit{$r$-null curve} if the vectors $U^a,\dots,\dif{U}{r-1}^a$ are linearly independent and null, whereas $\dif{U}{r}^a$ is not null.
Consequently, it holds
\begin{equation}
U^{(i)}_c\dif{U}{j}^c=
\begin{cases}
0,&\qquad\text{for}\ i+j<2r, \\ 
(-1)^{r-i}U^{(r)}_c \dif{U}{r}^c,&\qquad\text{for}\ i+j=2r.
\end{cases}
\label{UiUj}
\end{equation}
Note that the notion of $r$-null curves is well defined as it does not depend on the chosen scale.
Indeed, one can easily verify by induction using \eqref{transU} that 
\begin{equation*}
\dif{\wh{U}}{i}^a = \dif{U}{i}^a \ \ \text{mod} \ \  
\langle U^a, \ldots, \dif{U}{i-1}^a \rangle, 
\end{equation*}
for all $1 \leq i \leq {2r}$.
In this vein, space- and time-like curves may be termed \textit{0-null curves}.
As an opposite extreme, we use the notation \textit{$\infty$-null curves} for curves, whose all vectors $\dif{U}{i}^a$ are isotropic; such curves are necessarily degenerate. 
The most prominent---and the most degenerate---instance of such curves are the null geodesics.

For an $r$-null curve, it follows from \eqref{transU} that the norm squared of $\dif{U}{r}^c$ is a nowhere vanishing density of conformal weight 2.
Thus,
\begin{equation*}
u:=\sqrt{|U^{(r)}_c\dif{U}{r}^c|}\in\E[1]
\end{equation*}
provides a lift $\form T\in\T$ as in \eqref{T}, to which we add the derived tractors $\form U,\dform U$ etc. as before.
Explicit expressions are very analogous to those in \eqref{U}, \eqref{A} etc. up to some shift.
For $i\le 2r$, we have
\begin{equation*}
\diform{U}{i}=\pmat{0\\u^{-1}\dif{U}{i}^a+\cdots\\$*$}, 
\end{equation*}
where the zero in the primary slot appears as $-u^{-1}U_c\dif{U}{i-1}^c=0$ and the dots in the middle slot denote terms of lower order.
In particular, we have $\g{\form T}{\form T}=\dots=\g{\diform{U}{r-1}}{\diform{U}{r-1}}=0$ and 
\begin{equation}
\vo:=\g{\diform{U}{r}}{\diform{U}{r}}= \pm 1. 
\label{pm}
\end{equation}
The first nontrivial quantity is $\g{\diform{U}{r+1}}{\diform{U}{r+1}}$.
In order to simplify expressions, we will use the notation 
\begin{equation}
\al:=\g{\diform{U}{r+1}}{\diform{U}{r+1}},\qquad
\be:=\g{\diform{U}{r+2}}{\diform{U}{r+2}},\qquad
\ga:=\g{\diform{U}{r+3}}{\diform{U}{r+3}},\qquad
\text{etc.}
\label{alefbet}
\end{equation}
The pattern of the initial relations remains the same as in \eqref{starting-gram} up to the signs concerning $\vo$ and a shift in the south-east direction.
After some computation, one shows that the first few antidiagonals of the Gram matrix read as
\begin{align}
\g{\diform{U}{i}}{\diform{U}{j}} = 
\begin{cases}
0,&\qquad\text{for}\ i+j<2r, \\
(-1)^{r-i}\,\vo,&\qquad\text{for}\ i+j=2r, \\
0,&\qquad\text{for}\ i+j=2r+1, \\
(-1)^{r+1-i}\, \al,&\qquad\text{for}\ i+j=2r+2, \\
(-1)^{r+1-i}\, \frac{2(r-i)+3}{2} \al',&\qquad\text{for}\ i+j=2r+3, \\
(-1)^{r-i}\, \be +(-1)^{r+1-i}\, \frac{(r-i+2)^2}{2} \al'',&\qquad\text{for}\ i+j=2r+4,
\end{cases}
\label{starting-anti}
\end{align}
where we use the convention $\diform{U}{-1}:=\form T$.

\smallskip
Under the reparametrization $\ti t=g(t)$ as in subsection \ref{repa}, one easily verifies that  
$\tform T=g'^{r+1}\form T$, from which one deduces the transformations of higher order tractors.
Considering $\diform{U}{i}:=0$, for $i<-1$, it turns out that 
\begin{equation}
\tdiform{U}{k}=g'^{r-k}\left( \diform{U}{k} +A_k g'^{-1}g''\diform{U}{k-1} +(B_k g'^{-1}g''' -C_k g'^{-2}g''^2)\diform{U}{k-2} + \cdots \right),
\label{ti-Uk}
\end{equation}
for $k=0,1,\dots$, where the coefficients $A_k,B_k,C_k$ are given by the recurrence relations
\begin{align*}
A_{k+1}&=A_k+r-k,\\
B_{k+1}&=B_k+A_k,\\
C_{k+1}&=C_k-(r-k-1)A_k,
\end{align*}
with the initial conditions $A_0=r+1$, $B_0=0$ and $C_0=0$.
It is an elementary, but tedious, exercise to solve and tidy up this system.
For $k=r+1$, it turns out that
\begin{align*}
A_{r+1}&=\tfrac12 (r+1)(r+2),\\
B_{r+1}&=\tfrac16 (r+1)(r+2)(2r+3),\\
C_{r+1}&=-\tfrac18 (r+1)(r+2)(r^2-r-4).
\end{align*}
From this and the initial relations  it follows that
\begin{align*}
\wt\al=g'^{-2}\left( \al -2\vo B_{r+1} g'^{-1}g''' +\vo (A_{r+1}^2 + 2C_{r+1})g'^{-2}g''^2 \right).
\end{align*}
Since $A_{r+1}^2 + 2C_{r+1}=3B_{r+1}$, the previous simplifies to
\begin{align}
\wt\al=g'^{-2}\left( \al - 2\vo B_{r+1}\cS(g) \right).
\label{ti-al}
\end{align}
Thus, the equation $\al=0$,
regarded as a condition on the parametrization of an $r$-null curve, determines a preferred family of parametrizations with freedom the projective group of the line.
This is a generalization of Proposition \ref{prop-repa} which just corresponds to $r=0$.
Altogether we may conclude with

\begin{proposition}\label{projparam}
For any admissible $r$, any $r$-null curve carries a preferred family of projective parameters.
\end{proposition}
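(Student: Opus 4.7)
The plan is to reduce the statement to the transformation formula \eqref{ti-al} and then apply the standard theory of the Schwarzian derivative. Since $B_{r+1}=\tfrac16(r+1)(r+2)(2r+3)\neq 0$ and $\vo=\pm 1$, the equation $\wt\al=0$ from \eqref{ti-al} is equivalent to the third-order ODE
\begin{equation*}
\cS(g)(t) \;=\; \frac{\al(t)}{2\vo B_{r+1}},
\end{equation*}
with unknown reparametrization $g$ (necessarily with $g'\neq 0$). So the first step is simply to write down this ODE.

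Next I would invoke existence: any reasonable third-order ODE of the form $\cS(g)=\phi$ is locally solvable, because prescribing $g(t_0)$, $g'(t_0)\neq 0$, $g''(t_0)$ reduces it to a first-order linear ODE for $g'''/g'$ (equivalently, setting $w=(g')^{-1/2}$ turns $\cS(g)=\phi$ into the linear second-order equation $w''+\tfrac12\phi\, w=0$). This establishes that the preferred family of projective parameters is nonempty.

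The heart of the argument is then the rigidity: any two solutions $g_1, g_2$ differ by a Möbius transformation. For this I would use the chain rule for the Schwarzian,
\begin{equation*}
\cS(h\circ g_1)(t) \;=\; \cS(h)\bigl(g_1(t)\bigr)\cdot\bigl(g_1'(t)\bigr)^2 \;+\; \cS(g_1)(t).
\end{equation*}
Setting $h:=g_2\circ g_1^{-1}$ (well-defined locally since $g_1'\neq 0$) and using $\cS(g_1)=\cS(g_2)$ yields $\cS(h)\equiv 0$ on the image of $g_1$, which forces $h$ to be a fractional linear transformation $h(\ti t)=(a\ti t+b)/(c\ti t+d)$. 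Conversely, the same chain rule shows that precomposing any solution by such a Möbius $h$ produces another solution, so the preferred family is precisely a $PGL(2,\R)$-orbit.

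No step is genuinely hard, but the only place where something could go wrong is the computation of $B_{r+1}$: the entire argument collapses if $B_{r+1}=0$, in which case $\al$ would be an absolute invariant rather than a Schwarzian obstruction. Fortunately this coefficient has already been extracted in the paragraph preceding the proposition, and its explicit value $\tfrac16(r+1)(r+2)(2r+3)$ is manifestly positive for every admissible $r\ge 0$; for $r=0$ one recovers Proposition~\ref{prop-repa}.
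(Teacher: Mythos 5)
Your argument is correct and takes essentially the same route as the paper: the paper likewise derives the reparametrization law \eqref{ti-al} (with $B_{r+1}=\tfrac16(r+1)(r+2)(2r+3)\neq0$) and then concludes that $\al=0$ determines a preferred family with projective freedom, exactly as in Proposition \ref{prop-repa}. You simply make explicit the standard Schwarzian facts (local solvability via $w=(g')^{-1/2}$ and M\"obius rigidity via the cocycle identity) that the paper leaves implicit.
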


It is well known that null geodesics carry a preferred family of affine parameters. 

\smallskip
Further, the Gram determinants are defined and denoted just the same as in \eqref{gram}.
Initial relations  for an $r$-null curve lead to $\De_i=0$, for $i=1,\dots,2r+2$, and $\De_{2r+3}= -\epsilon$.
The first potentially nontrivial determinant is $\De_{2r+4}$.
Analogously to Lemma \ref{lem-gram} we have

\begin{lemma}
Along any $r$-null curve, the Gram determinant $\De_i$, for $i=2r+4,\dots,n+2$, is a relative conformal invariant of weight $i(i-2r-3)$.
\end{lemma}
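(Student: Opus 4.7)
The plan is to mimic the proof of Lemma \ref{lem-gram}, adapted to the $r$-null setting, where the key input is the reparametrization formula \eqref{ti-Uk}. The formula tells us precisely that $\tdiform{U}{k}$ equals $g'^{r-k}\diform{U}{k}$ plus tractors lying in the span of the lower-order derived tractors $\diform{U}{k-1}, \diform{U}{k-2}, \dots$ (with the convention $\diform{U}{-1}=\form T$). This triangular structure is exactly what governs how the Gram determinant rescales.

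First I would fix the indexing convention so that the ordered tuple of tractors $(\form T,\form U,\dform U,\dots,\diform{U}{i-2})$ is relabelled as $(\diform{U}{-1},\diform{U}{0},\dots,\diform{U}{i-2})$. Then the transformation rule \eqref{ti-Uk} shows that the tilded tuple is obtained from the original by a change of basis whose matrix $P$ is lower triangular with diagonal entries $g'^{r-k}$ for $k=-1,0,\dots,i-2$. Since the Gram matrix transforms as $\widetilde{G}_i = P^{\top} G_i P$, we obtain
\begin{align*}
\wt{\De}_i = (\det P)^2\,\De_i = \left(\prod_{k=-1}^{i-2} g'^{r-k}\right)^{\!2}\De_i.
\end{align*}

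Next I would evaluate the exponent. The sum $\sum_{k=-1}^{i-2}(r-k)$ contains $i$ terms and equals
\begin{align*}
i\cdot r - \sum_{k=-1}^{i-2} k = ir -\Bigl(-1+\tfrac{(i-2)(i-1)}{2}\Bigr) = ir +1 - \tfrac{(i-2)(i-1)}{2}.
\end{align*}
Doubling and simplifying gives $2ir+2-(i^2-3i+2) = -i(i-2r-3)$. Therefore $\wt{\De}_i = g'^{-i(i-2r-3)}\De_i$, proving that $\De_i$ is a relative conformal invariant of weight $i(i-2r-3)$ in the sense of subsection \ref{relative}.

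There is no real obstacle beyond bookkeeping. The only subtle point is ensuring that \eqref{ti-Uk} is indeed triangular modulo strictly lower-order derived tractors for all relevant $k$: this is transparent for $k\le r+1$ from the display, and for larger $k$ it follows by the same inductive derivation that produced the coefficients $A_k, B_k, C_k$. Once that triangularity is in hand, the determinant computation is automatic, and the restriction to $i \le n+2$ is just the dimensional constraint guaranteeing that the tractors in question can be linearly independent (so that $\De_i$ is a genuinely defined quantity along a generic $r$-null curve).
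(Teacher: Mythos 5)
Your argument is correct and is essentially the paper's own: the paper proves this lemma only by analogy with Lemma \ref{lem-gram}, whose proof is exactly your triangular change-of-basis argument based on the reparametrization rule (here \eqref{ti-Uk} with leading factor $g'^{r-k}$), followed by the determinant computation. Your exponent bookkeeping, $2\sum_{k=-1}^{i-2}(r-k)=-i(i-2r-3)$, checks out and correctly specializes to weight $i(i-3)$ when $r=0$.
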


For any $i=2r+4,\dots,n+2$, the vanishing of $\De_i$ is equivalent to the fact that the determining tractors are linearly dependent.
The weight of any $\De_i$ is even, hence its sign is not changed by reparametrizations.
In contrast to Lemma \ref{Ph} (corresponding to $r=0$), we cannot say anything about the particular sign of $\De_{2r+4}$.
For an $r$-null curve with nowhere vanishing $\De_{2r+4}$, we define  the \textit{conformal pseudo-arc-length} parameter by integration of the 1-form 
\begin{equation}
ds:=\sqrt[2r+4]{|\De_{2r+4}(t)|}\,dt
\label{null-ds}
\end{equation}
along the curve; it is given uniquely up to an additive constant.
The invariant $\De_{2r+4}$  (hence also all higher ones) may vanish. 
In particular, this happens automatically if $2r+4>n+2$, i.e. if $n<2r+2$.
From the introductory counts we know that $r$ and $n$ are related by $n\ge 2r$, therefore  the critical dimensions are $n=2r$ and $2r+1$.
In such cases, none of the just considered invariants can be used to define a natural parameter of the curve.
However, there is another couple of relative conformal invariants as shown in subsection \ref{wilczynski} below; 
see also Remark \ref{dim3} for further comments.

\smallskip
At this stage, we sketch how to adapt the construction of the tractor Frenet frame from subsection \ref{frenet} for $r$-null curves with the pseudo-arc-length defined.
The smallest nondegenerate subbundle in $\T$ gradually built from the derived tractors is $\spanb{\form T,\form U,\dots,\diform{U}{2r+1}}$.
The restriction of the tractor metric to this subbundle has signature $(r+2,r+1)$, respectively $(r+1,r+2)$, thus its orthogonal complement in $\T$ has  signature $(p-r-1,q-r)$, respectively $(p-r,q-r-1)$.
We may always start with the prescription
\begin{equation}
\form U_0:=\form T,\qquad
\form U_1:=\form U,\qquad
\dots,\qquad
\form U_{r+1}:=\diform{U}{r}.
\label{null-U0}
\end{equation}
Then we have to consider transformations determining 
$\form U_i\in \spanb{\form T,\form U,\dots,\diform{U}{i-1}}$, for $i=r+2,\dots,2r+2$, so that the Gram matrix corresponding to the tractors $(\form U_0,\dots,\form U_{2r+2})$ 
is antidiagoanal with values $\pm 1$ (this substitutes the upper-left block in \eqref{grU}).
Note that in contrast to the case $r=0$, the definition for $\form U_{r+2},\dots,\form U_{2r+2}$ is not unique. 
This freedom is already visible from the first tractor of this subsequence which may be given as
\begin{equation}
\form U_{r+2}=-\vo \diform{U}{r+1} -\tfrac12\vo\al\diform{U}{r-1} +\cdots,
\label{null-U3}
\end{equation}
where the dots stay for any linear combination of lower order tractors $\diform{U}{r-2},\dots,\form T$.
To accomplish the full tractor frame, i.e. to determine $(\form U_{2r+3},\dots,\form U_{n+1})$, we continue by an orthonormalization process.
Going along, we may meet an isotropic tractor: in such case we face the same problem as discussed in Remark \ref{indef4}(3) and some additional choices are inevitable.

Having constructed the tractor Frenet frame, we differentiate with respect to the conformal pseudo-arc-length to obtain the tractor Frenet equations with the generating set of absolute conformal invariants.
Just from \eqref{null-U0} and \eqref{null-U3} we easily deduce first few Frenet-like identities:
\begin{equation*}
\begin{gathered}
\dform U_0=\form U_1,\qquad
\dots,\qquad
\dform U_r=\form U_{r+1},\\
\dform U_{r+1}=K_1\form U_{r} -\vo \form U_{r+2}, 
\qquad\text{where}\qquad 
K_1:=-\tfrac12\vo\al.
\end{gathered}
\end{equation*}
Clearly, the first conformal curvature is not affected by the freedom in the construction of the tractor Frenet frame. 
From the expression of $K_1$ and the remark preceeding Proposition \ref{projparam} we immediately have the following generalization of Proposition \ref{prop-K1}.

\begin{proposition}\label{null-prop-K1}
The conformal pseudo-arc-length parameter belongs to the projective family of parameters of an $r$-null curve if and only if $K_1=0$.
\end{proposition}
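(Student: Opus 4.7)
The plan is to read off the statement as an immediate consequence of two pieces of information already in place: the explicit formula $K_1=-\tfrac12\vo\al$ displayed just above the proposition, and the characterization of the projective family of parameters by the equation $\al=0$ established in the paragraph preceding Proposition~\ref{projparam}.

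First, I would unpack the definition of $K_1$. By construction, the tractor Frenet frame, and hence the coefficient $K_1$ in the Frenet-like identity $\dform U_{r+1}=K_1\form U_r-\vo\form U_{r+2}$, is computed with respect to the conformal pseudo-arc-length parameter $s$ of the given $r$-null curve. Since $\vo=\pm1$ by \eqref{pm}, the condition $K_1=0$ is equivalent to the vanishing of $\al=\g{\diform{U}{r+1}}{\diform{U}{r+1}}$ at the parametrization $s$.

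Second, the transformation law \eqref{ti-al}, namely $\wt\al=g'^{-2}(\al-2\vo B_{r+1}\cS(g))$ with $B_{r+1}=\tfrac16(r+1)(r+2)(2r+3)\ne0$, shows that $\al=0$ is a condition on the parametrization whose solution set is closed under and transitive with respect to the projective group of the line; this is exactly how the projective family was defined in Proposition~\ref{projparam}. Combining the two observations, the pseudo-arc-length $s$ lies in the projective family precisely when $\al$ vanishes in the $s$-parametrization, which by the first step is equivalent to $K_1=0$. There is no real obstacle here: the proposition is a clean corollary of the explicit form of $K_1$ and the prior identification of the projective family with the zero locus of $\al$, and the only thing to verify carefully is that both equations $K_1=0$ and $\al=0$ are being evaluated in the same distinguished parametrization $s$, which they are by the construction of the tractor Frenet frame.
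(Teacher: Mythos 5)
Your proposal is correct and follows essentially the same route as the paper, which likewise derives the statement immediately from the formula $K_1=-\tfrac12\vo\al$ (with $\vo=\pm1$) together with the characterization of the projective family as the zero locus of $\al$ established just before Proposition~\ref{projparam}. Your extra care about both conditions being evaluated in the pseudo-arc-length parametrization is exactly the right point to check, and it holds by construction.
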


The freedom in the construction of $(\form U_{r+2},\dots,\form U_{2r+2})$ influences the corresponding part of tractor Frenet equations and so the corresponding conformal curvatures, namely, $K_2,\dots,K_{r+1}$.
The respective freedom in the construction of $(\form U_{2r+3},\dots,\form U_{n+1})$ influences some of  the remaining conformal curvatures $K_{r+2},\dots,K_{n-r-1}$.
Notably, they are $n-r-1$ in total.

Various expressions analogous to \eqref{Kie} and \eqref{KiVj} (with respect to the conformal pseudo-arc-length parameter), or \eqref{Ki} and \eqref{K1} (with respect to an arbitrary parameter) are deducible.
We provide more details only in subsection \ref{lorenz} for the case of Lorenzian signature.

\subsection{Invariants of Wilczynski type}\label{wilczynski}
For an $r$-null curve, vanishing of $\De_{2r+4}$ means that the tractors $\form T,\form U, \dots,\diform{U}{2r+2}$ are linearly dependent. 
Since the first $2r+3$ tractors from this sequence are linearly independent and $\diform{U}{k}=\diform{T}{k+1}$, this condition may be interpreted as a tractor linear ODE,
\begin{align}
\diform{T}{2r+3}+q_{2r+2}\diform{T}{2r+2}+\dots+q_1\dform T+q_0\form T=0,
\label{ode}
\end{align}
where $q_0, \dots,q_{2r+2}$ are functions expressible in terms of pairings of the initial tractors.
Following \cite[ch.~II, \S~4]{Wilczynski1906}, we may consider the Wilczynski invariants of this equation, i.e. the set of $2r+1$ essential invariants denoted as $\Th_3,\dots,\Th_{2r+3}$ (the indices indicate weights). 
Of course, we may associate Wilczynski invariants to the linear differential operator on the left hand side of \eqref{ode}, i.e. also to general $r$-null curves with nonzero $\De_{2r+4}$.
As a matter of fact, these invariants are relative conformal invariants of the curve. 

Assuming $q_{2r+2}=0$, i.e.\ the so-called  semi-canonical form, an explicit form of first two of these invariants is given by the following Lemma.

\begin{lemma} 
\label{cool}
Let $y^{(k)}+q_{k-2}y^{(k-2)}+\dots+q_0 y=0$ be a linear ODE of $k$th order in the semi-canonical form ($q_{k-1}=0$).
Then the first two Wilczynski invariants are
\begin{align*} 
& \Th_3=q_{k-3}-\frac{k-2}2 q_{k-2}', \\
& \Th_4 =q_{k-4} -\frac{k-3}2q'_{k-3} +\frac{(k-2)(k-3)}{10} q_{k-2}'' -\frac{(5k+7)(k-2)(k-3)}{10k(k+1)(k-1)} q_{k-2}^2.
\end{align*}
\end{lemma}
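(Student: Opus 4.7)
The plan is to verify that the two expressions given are relative invariants of the stated weights under the full equivalence group of the ODE, and then to identify them with $\Th_3$ and $\Th_4$ as Wilczynski invariants. The admissible transformations are the combined reparametrization $\bar x=\phi(x)$ together with a rescaling $y=\rho(x)\bar y$, and the condition that the semi-canonical form $\bar q_{k-1}=0$ be preserved pins down $\rho$ uniquely up to a constant; explicitly, one finds $\rho=(\phi')^{-(k-1)/2}$. Hence it suffices to track how the remaining coefficients $q_{k-2},q_{k-3},q_{k-4}$ change under this combined substitution with $\rho$ so chosen.

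First I would compute, by a careful application of the chain rule and the Leibniz rule, the transformation law for $q_{k-2}$. A short calculation, entirely analogous to the one producing \eqref{tiAA} but one order higher, shows that
\[
\bar q_{k-2}=\phi'^{-2}\bigl(q_{k-2}-\lambda_k\,\cS(\phi)\bigr),
\]
for an explicit constant $\lambda_k$ linear in $k$ (determined by matching the coefficient of $\bar y^{(k-2)}$ after expanding $y^{(k)}$). This is already the expected behaviour of a semi-invariant of weight $2$. Next I would compute $\bar q_{k-3}$, which acquires both a derivative $q'_{k-2}$ and a term proportional to $\cS(\phi)'$; the precise combination
\[
\Th_3=q_{k-3}-\tfrac{k-2}{2}\,q'_{k-2}
\]
is chosen so that the $\cS(\phi)'$-contribution from $q_{k-3}$ cancels the contribution produced by differentiating the transformation rule of $q_{k-2}$, leaving $\bar\Th_3=\phi'^{-3}\Th_3$. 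This is the statement of weight $3$.

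For $\Th_4$ the same strategy applies but the bookkeeping is heavier. I would expand $\bar q_{k-4}$ through order $\phi'^{-4}$, keeping track of every occurrence of $\cS(\phi)$, $\cS(\phi)'$, $\cS(\phi)''$ and $\cS(\phi)^2$. The term $-\tfrac{k-3}{2}q'_{k-3}$ kills the $\cS(\phi)''$-contribution by the same mechanism as in weight $3$, while $+\tfrac{(k-2)(k-3)}{10}q''_{k-2}$ is chosen to cancel further terms produced by the Schwarzian acting twice. The nonlinear term $-\tfrac{(5k+7)(k-2)(k-3)}{10k(k+1)(k-1)}q_{k-2}^2$ then absorbs the remaining $\cS(\phi)^2$-contributions that arise both from the square of the weight-$2$ transformation rule and from the nonlinear piece of $\cS(\phi)''$. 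After this the only surviving transformation is the homogeneous one $\bar\Th_4=\phi'^{-4}\Th_4$.

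The genuine obstacle is the last step: balancing the coefficient of $q_{k-2}^2$ requires one simultaneously to track every source of $\cS(\phi)^2$, and the identity $A_{r+1}^2+2C_{r+1}=3B_{r+1}$ encountered in the derivation of \eqref{ti-al} has a close analogue here. Once the universal identity among these sources has been written out, the specific constant $-\tfrac{(5k+7)(k-2)(k-3)}{10k(k+1)(k-1)}$ emerges uniquely. Finally, to identify these expressions with Wilczynski's $\Th_3$ and $\Th_4$, one observes that both the weight and the order of derivatives match Wilczynski's lowest two independent semi-invariants, and since his basis is determined up to rescaling by these data, the formulas coincide with the ones given.
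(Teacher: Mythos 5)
There is a genuine gap here, and it is worth being precise about what the paper actually does: it gives \emph{no} proof of Lemma \ref{cool}. It records that the formula for $\Th_3$ goes back to Wilczynski, states that the direct derivation of $\Th_4$ by Wilczynski's method is ``extremely tedious,'' and says the authors instead used the quantization technique of Ovsienko--Tabachnikov \cite{OT}, with details deferred to a separate publication. Your plan is exactly the direct route the authors chose to avoid: pin down $\rho=(\phi')^{-(k-1)/2}$ from preservation of the semi-canonical form (this step is correct), compute the inhomogeneous transformation laws of $q_{k-2},q_{k-3},q_{k-4}$, and choose the corrections so that all Schwarzian contributions cancel. That strategy is legitimate in principle and, if carried out, would constitute an independent and self-contained proof --- arguably more than the paper itself provides.

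The problem is that you have not carried it out, and the entire content of the lemma lives in the constants you never derive. The statement is not ``there exist relative invariants of weights $3$ and $4$'' (which is classical); it is that the coefficients are $-\tfrac{k-2}{2}$, $\tfrac{(k-2)(k-3)}{10}$ and $-\tfrac{(5k+7)(k-2)(k-3)}{10k(k+1)(k-1)}$. Your write-up asserts that these ``emerge uniquely'' after the bookkeeping, but the bookkeeping is the proof, and for $q_{k-4}$ it involves fourth derivatives of $\phi$ and a web of cross terms between the rescaling by $\rho$ and the reparametrization. Moreover, the one place where you do commit to a concrete quantitative claim is already wrong: the Schwarzian coefficient in $\bar q_{k-2}=\phi'^{-2}\bigl(q_{k-2}-\lambda_k\,\cS(\phi)\bigr)$ is \emph{cubic} in $k$, namely $\lambda_k=\tfrac{1}{12}k(k^2-1)$, not linear --- compare the paper's own \eqref{ti-al}, where the analogous coefficient is $2\vo B_{r+1}$ with $B_{r+1}=\tfrac16(r+1)(r+2)(2r+3)$ and $k=2r+3$. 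This is a small slip, but it shows the computation has not actually been done, and without it the claimed value of the $q_{k-2}^2$ coefficient cannot be checked. Finally, your closing uniqueness argument (``weight and order determine the invariant up to rescaling'') needs justification: you must rule out other polynomial weight-$4$ relative invariants built from $q_{k-2}$, $\Th_3$ and their derivatives before the identification with Wilczynski's $\Th_4$ is automatic. As it stands the proposal is a plausible program, not a proof.
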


The form of $\Th_3$ is obtained already by Wilczynski, whereas it seems the formula for $\Th_4$ did not appear in the literature so far. 
It requires extremely tedious computation using Wilczynski's approach.
We have used an alternative technique of the so-called quantization, cf. \cite{OT}.
Details will be published elsewhere. 

\smallskip

Now we need to relate coefficients $q_i$ from the equation \eqref{ode} to quantities $\al$, $\be$, $\ga$ etc. from \eqref{alefbet}.
By pairing both sides of \eqref{ode} with $\form T$, it follows from \eqref{starting-anti} that $q_{2r+2}=0$, i.e.\ the equation is in the semi-canonical form.
Considering similarly the pairing of both sides of \eqref{ode} with $\dform{T} = \form{U}$,  $\ddform{T} = \dform{U}$ and with $\dddform{T} = \ddform{U}$, one computes
\begin{align} \label{q}
q_{2r+2}=0,\qquad q_{2r+1} = \vo \al,\qquad q_{2r} = \frac{2r+1}2\vo \al', \qquad 
q_{2r-1} = -\vo \be  + \frac{r^2}{2}\vo \al''+ \al^2.
\end{align}
Combining this with Lemma \ref{cool}, we obtain  the resulting form of $\Th_4$:

\begin{proposition}\label{wilca}
Let $\vo$, $\al$ and $\be$ be the quantities associated to an $r$-null curves (with respect to an arbitrary parametrization) as in \eqref{pm} and \eqref{alefbet}.
Then $\Th_3 = \Th_5 =0$ and 
\begin{align*}
\Th_4 = -\vo \be -\frac{r(r+3)}{10}\vo \al'' + \frac{(r+3)(2r+5)(5r+4)}{10(r+1)(r+2)(2r+3)} \al^2
\end{align*} 
is a relative conformal invariant of the weight 4.
\end{proposition}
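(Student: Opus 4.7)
The plan is to apply Lemma~\ref{cool} directly, with $k = 2r+3$, to the ODE~\eqref{ode}. The coefficients $q_{2r+2}, q_{2r+1}, q_{2r}, q_{2r-1}$ are already given by~\eqref{q}, so the entire argument reduces to substitution and elementary algebra, plus a separate observation for $\Theta_5$.

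First I would compute $\Theta_3$. With $k = 2r+3$ we have $q_{k-3} = q_{2r} = \tfrac{2r+1}{2}\vo\al'$ and $\tfrac{k-2}{2}q_{k-2}' = \tfrac{2r+1}{2}\vo\al'$, so the two terms cancel and $\Theta_3 = 0$ immediately. Next, for $\Theta_4$ I assemble the four contributions from Lemma~\ref{cool}. Plugging in $q_{k-4} = -\vo\be + \tfrac{r^2}{2}\vo\al'' + \al^2$, $q_{k-3}' = \tfrac{2r+1}{2}\vo\al''$, $q_{k-2}'' = \vo\al''$, and collecting coefficients of $\vo\al''$ yields
\begin{align*}
\tfrac{r^2}{2} - \tfrac{r(2r+1)}{2} + \tfrac{r(2r+1)}{5} = r\cdot\tfrac{-5(r+1)+2(2r+1)}{10} = -\tfrac{r(r+3)}{10},
\end{align*}
matching the stated coefficient. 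For the $\al^2$ coefficient, I would simplify
\begin{align*}
1 - \tfrac{(5k+7)(k-2)(k-3)}{10k(k+1)(k-1)} = 1 - \tfrac{r(2r+1)(5r+11)}{10(r+1)(r+2)(2r+3)} = \tfrac{10(r+1)(r+2)(2r+3)-r(2r+1)(5r+11)}{10(r+1)(r+2)(2r+3)},
\end{align*}
and verify by expansion that the numerator equals $(r+3)(2r+5)(5r+4)$; both sides equal $10r^3+63r^2+119r+60$. This is the main (and only mildly tedious) piece of algebra.

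For $\Theta_5 = 0$, I would appeal to the self-adjointness of the operator on the left-hand side of~\eqref{ode}. The standard tractor bundle carries the parallel nondegenerate bilinear form $\langle\,,\rangle$, and the initial relations~\eqref{starting-anti} show that the pairings $\g{\diform{T}{i}}{\diform{T}{j}}$ depend only on $i+j$ (up to alternating signs). Such a solution space equipped with an invariant bilinear form forces the associated scalar operator to be formally self-adjoint in Wilczynski's sense, whence all odd-indexed Wilczynski invariants vanish; in particular $\Theta_3 = \Theta_5 = 0$. (The vanishing of $\Theta_3$ is then an independent consistency check on our calculation above.)

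Finally, the relative conformal invariance of $\Theta_4$ with weight $4$ comes from combining two facts: under a reparametrization $\ti t = g(t)$, the tractor $\tform T = g'^{r+1}\form T$ by~\eqref{ti-Uk} (with $k=-1$), and the classical Wilczynski invariants $\Theta_j$ of a linear operator transform as densities of weight $j$ on the base parameter. Weighted appropriately for the tractor factor, the net effect is that $\Theta_4$ picks up exactly $g'^{-4}$, which is the claimed weight. The main obstacle in the whole argument is purely bookkeeping: verifying the polynomial identity $10(r+1)(r+2)(2r+3)-r(2r+1)(5r+11) = (r+3)(2r+5)(5r+4)$ and being careful with the $(-1)^{r-i}$ signs when extracting the $q_i$'s from~\eqref{starting-anti}.
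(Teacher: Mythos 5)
Your computations of $\Th_3$ and $\Th_4$ are correct and follow exactly the paper's route: substitute \eqref{q} into Lemma \ref{cool} with $k=2r+3$, and the polynomial identity $10(r+1)(r+2)(2r+3)-r(2r+1)(5r+11)=(r+3)(2r+5)(5r+4)$ checks out. The weight count is also fine (the classical $\Th_4$ is insensitive to the rescaling $\tform T=g'^{r+1}\form T$ of the unknown and transforms with weight $4$ under reparametrization, which is what the paper reads off from \eqref{alefbet} and \eqref{ti-Uk}).

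The genuine gap is your argument for $\Th_5=0$. You derive it from self-adjointness of the operator in \eqref{ode}, justified by the claim that the pairings $\g{\diform{T}{i}}{\diform{T}{j}}$ depend only on $i+j$ up to alternating signs. That premise is false: by \eqref{starting-anti}, the antidiagonal $i+j=2r+4$ contains the term $(-1)^{r+1-i}\tfrac{(r-i+2)^2}{2}\al''$, whose magnitude depends on $i$ individually (already visible for $r=0$ in \eqref{starting-gram}, where $\g{\dform U}{\dddform U}=\tfrac12\al''-\be$ while $\g{\ddform U}{\ddform U}=\be$). Moreover, self-adjointness of \eqref{ode} is precisely what the paper leaves as an open observation in Remark \ref{rem-wilca}(2) ("experiments with specific $r$'s suggest..."), so it cannot be invoked as a known fact. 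The paper instead proves $\Th_5=0$ directly: since $\Th_5$ is a relative invariant, one may pass to a projective parametrization ($\al=0$), in which \eqref{ode} is in Laguerre--Forsyth canonical form; one then computes the next antidiagonal $\g{\diform{U}{i}}{\diform{U}{2r+5-i}}=(-1)^{r-i}\tfrac{2(r-i)+5}{2}\be'$, hence $q_{2r-2}=-\tfrac{2r-1}{2}\vo\be'$, and substitutes $q_{2r-1}$ and $q_{2r-2}$ into Wilczynski's explicit formula for $\Th_5$ to see that it vanishes. You need to supply some such direct computation; the structural shortcut you propose does not stand as written.
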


\begin{proof}
Both vanishing of $\Th_3$ and the form of $\Th_4$ follows from \eqref{q} and Lemma \ref{cool}, where $k=2r+3$.
The particular weight of $\Th_4$ follows from the definitions \eqref{alefbet} and the exponent of $g'$ in \eqref{ti-Uk}.

To show $\Th_5 =0$, we shall for simplicity assume a projective parametrization of the curve, i.e.\ $\al=0$. 
In particular, the equation is in the canonical Laguerre--Forsyth form for which the expressions of Wilczynski invariants are well known, see \cite[ch.~II, eqn.~(48)]{Wilczynski1906}. 
Analogously to the computation leading to \eqref{q}, we obtain
$$
\g{\diform{U}{i}}{\diform{U}{2r+5-i}} =   (-1)^{r-i} \frac{2(r-i)+5}{2}\be'
\qquad \text{and} \qquad
q_{2r-2} = -\frac{2r-1}{2}\vo \be'.
$$
Now substituting coefficients $q_{2r-1}$ and $q_{2r-2}$ into the just referred formula, 
one easily verifies that  $\Th_5=0$.
\end{proof}

Observe that $\Th_4$  recovers $\De_4$ from \eqref{De4} for $r=0$.

\begin{remarks}
\label{rem-wilca}
(1)
The Laguerre--Forsyth form of the equation is preserved by transformations with freedom the projective group of the line, which corresponds precisely to the condition $\al=0$.
Also, it follows that the linear equation is equivalent to the trivial equation if and only if all Wilczynski invariants vanish.
In our case, this is equivalent to the vanishing of $\al$, $\be$, etc. from \eqref{alefbet} up to $\g{\diform{U}{2r+2}}{\diform{U}{2r+2}}$.

(2)
Experiments with specific $r$'s suggest the equation \eqref{ode} could be self-adjoint, i.e. all odd Wilczynski invariants vanish. 
This would mean that we have just $r$ (rather than $2r$) new and potentially nontrivial invariants 
$\Th_4,\dots,\Th_{2r+2}$.
\end{remarks}

\subsection{Conformal null helices}\label{null-geod}
For any space- or time-like curve with an arbitrary parametrization, a metric in the conformal class may be chosen so that 
$U'^a=0$, cf. \eqref{transacc}.
For null curves, this condition holds only if the curve is \textit{null geodesic}, i.e.
the null curve whose acceleration vector $U'^a$ is proportional to $U^a$.
Consequently, all higher order vectors are proportional to $U^a$.
These curves, although very important, cannot be lifted to the standard tractor bundle in the sense considered above. 
That is why they are excluded from our considerations.

For an $r$-null curve, we see from \eqref{UiUj} and the subsequent discussion that the vectors $U^a,\dots,\dif{U}{2r}^a$ are linearly independent for any choice of scale.
Therefore we cannot achieve any of the conditions $U'^a=0$, \dots, $\dif{U}{2r}^a=0$ by a conformal change of metric.
The simplest conceivable statement is 

\begin{lemma} \label{'''}
For an $r$-null curve with an arbitrary parametrization, a metric in the conformal class may be chosen so that $U^{(r)}_c U^{(r)}{}^c=\pm 1$ and  $\dif{U}{2r+1}^a=0$.
\end{lemma}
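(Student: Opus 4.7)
The strategy is to first achieve the norm condition by taking the scale to be a specific density on the curve, and then to use the residual conformal freedom to cancel $\dif{U}{2r+1}^a$. For the first step, iterating \eqref{transU} with \eqref{UiUj} shows that the conformal correction $\dif{\hat U}{r}{}^a - \dif{U}{r}^a$ under a change of scale lies in the totally null span $\{U, \dots, U^{(r-1)}\}$, so all cross pairings in $U^{(r)}_c U^{(r)}{}^c$ vanish and this quantity is a conformally invariant nowhere-zero section of $\cE[2]$ on $\Gamma$. Hence $u := \sqrt{|U^{(r)}_c U^{(r)}{}^c|} \in \cE[1]|_\Gamma$ is well defined; extending it to a positive section of $\cE[1]$ near $\Gamma$ and using it as a scale achieves $U^{(r)}_c U^{(r)}{}^c = \vo$, with sign $\vo$ as in \eqref{pm}.

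The residual freedom consists of rescalings $\hat\sigma = e^\phi \sigma$ with $\phi|_\Gamma = 0$, as preservation of the first condition forces $e^{-2\phi}|_\Gamma = 1$ (via the same cross-term vanishing). This leaves $\Upsilon_a|_\Gamma$ with vanishing tangential component $U^c\Upsilon_c$ but freely assignable transverse components ($n-1$ functions of the parameter, the higher derivatives along $\Gamma$ being intrinsically determined). Iterating \eqref{transU} along $\Gamma$ expresses $\dif{\hat U}{2r+1}{}^a$ as $\dif{U}{2r+1}^a$ plus an explicit linear expression in $\Upsilon|_\Gamma$. On the other hand, differentiating the constant $U^{(r)}_c U^{(r)}{}^c = \vo$ repeatedly and applying the antidiagonal pattern of \eqref{starting-anti} produces a chain argument forcing $U_a \dif{\hat U}{2r+1}{}^a = 0$ automatically, eliminating one of the $n$ scalar equations encoded in $\dif{\hat U}{2r+1}{}^a = 0$.

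The remaining $n-1$ equations are to be solved for the $n-1$ free components of $\Upsilon|_\Gamma$. Decomposed along the basis $\{U, U', \dots, U^{(2r)}\}$ together with its orthogonal complement, the system inherits a triangular structure whose diagonal entries are the antidiagonal values $\pm\vo$ from \eqref{starting-anti}, hence nonzero, so it is solvable; a smooth extension of the resulting $\Upsilon|_\Gamma$ to a scale near $\Gamma$ is routine (Whitney-type extension). The main technical obstacle is the careful bookkeeping of the iteration of \eqref{transU} to establish the precise triangular structure in general; direct verification of the base case $r=1$---where one finds that the transverse components of $\Upsilon$ matching the orthogonal part of $\dif{U}{3}^a$ together with $U'_a\Upsilon^a = \tfrac12 U''_c U''{}^c$ suffice, and the $U$-equation then becomes automatic---illustrates the mechanism and suggests an induction on the component index for general $r$.
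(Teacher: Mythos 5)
Your overall architecture---first normalize $U^{(r)}_c\dif{U}{r}^c=\pm1$, then use the $n-1$ remaining components of $\Up_a|_\Ga$ to kill $\dif{U}{2r+1}^a$, with the tangential equation $U_a\dif{\wh U}{2r+1}^a=0$ automatic---is sound, and those parts are argued correctly. The gap is exactly the step you defer as ``the main technical obstacle'': the claim that the residual system is triangular with nonzero diagonal entries $\pm\vo$ is false, and the failure is visible already in your own test case $r=1$. There, with $U^c\Up_c=0$, $p:=U'_c\Up^c$, $q:=U''_c\Up^c$ and $\vo:=U'_cU'^c$, iterating \eqref{transU} gives
\begin{equation*}
\dif{\wh U}{3}^a=\dif{U}{3}^a+\vo\,\Up^a+p\,U'^a+(p'+q)\,U^a ,
\end{equation*}
and pairing with $U''_a$ (using $U_cU''^c=-\vo$, $U'_cU''^c=0$) yields $U''_c\dif{U}{3}^c+\vo q-\vo(p'+q)=U''_c\dif{U}{3}^c-\vo p'$: the unknown $q$ cancels identically. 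The equation that on your count should determine $q$ thus has a \emph{zero} diagonal entry; it is instead a consistency condition, which holds only because the value of $p$ forced by the $U'$-pairing (whose coefficient is $2\vo$, not $\vo$) satisfies $p'=\vo\, U''_c\dif{U}{3}^c$. Solvability of the longitudinal block therefore rests not on an invertible triangular matrix but on showing that every degenerate direction of the system lands on an identity following from differentiating $U^{(r)}_c\dif{U}{r}^c=\vo$; for general $r$ this is the real content of the lemma, and your proposal neither proves it nor states the correct mechanism. (A further point you gloss over: the longitudinal equations involve the $t$-derivatives of the unknowns $\Up_c\dif{U}{j}^c$ as well as the unknowns themselves, so even ``triangular'' would have to be meant in a differential sense.)

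The paper's proof is designed to avoid precisely this bookkeeping. Instead of iterating \eqref{transU}, it tracks the components of the invariant tractors $\form T,\form U,\dots,\diform{U}{2r+1}$, whose change-of-scale law \eqref{transtr} is purely algebraic in $\Up$. The requirement that the bottom slots of $\dform U,\dots,\diform{U}{2r}$ vanish reads $\Up_c(\text{middle slot})^c=\rho_j$, and since the middle slots realize the flag $\langle U^a,\dots,\dif{U}{j}^a\rangle$, these conditions form a genuinely triangular linear system for the pairings $\Up_c\dif{U}{j}^c$; once they hold, the middle slot of $\diform{U}{2r+1}$ is exactly $\dif{U}{2r+1}^a$ and is removed by a further rescaling exploiting the nonzero top slot $(-1)^{r-1}$. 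To complete your direct tensorial route you would need to prove, for all $r$, the consistency identities at the degenerate directions; the ``induction on the component index'' you gesture at would have to establish exactly those.
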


\begin{proof}
This can be proved by various means.
For the sake of latter references, we employ the tractors anyhow it may seem artificial. 
Firstly, we may always choose the metric so that $U^{(r)}_c U^{(r)}{}^c=\pm 1$.
With this assumption, the tractors associated to the curve have the form
\begin{equation*}
\begin{gathered}
\form T=\pmat{0\\0\\1}, \qquad
\form U=\pmat{0\\U^a\\0}, \qquad
\dform U=\pmat{0\\U'^a  \\ \rho_1}, \qquad
\dots,\\
\diform{U}{j}=\pmat{\ep_j \\ \dif{U}{j}^a \!\!\!\mod \langle U^a, \ldots,\dif{U}{j-2}^a \rangle \\ \rho_j},
\end{gathered}
\end{equation*}
for $j = 2, \ldots, 2r+1$, where $\rho_j$ are in general nonzero while all $\ep_j$ vanish except for $\ep_{2r+1}=(-1)^{r-1}$.

Secondly, the choice of metric may be adjusted so that the bottom slot of $\dform U$ vanishes and the expressions of previous tractors are unchanged:
according to \eqref{transtr}, the corresponding $\Up_a$ has to satisfy $\Up_cU'^c=\rho_1$ and $\Up_c U^c=0$.
Hence 
$\ddform U=\smat{0\\ U''^a \\ \rho_2}$, 
i.e. there is only the leading term in the middle slot.
Inductively, we may achieve a rescaling so that  
$\diform{U}{j}=\smat{0 \\ \dif{U}{j}^a  \\ 0}$, for all $j\le 2r$.
Hence 
$\diform{U}{2r+1}=\smat{(-1)^{r-1} \\ \dif{U}{2r+1}^a  \\ \ast}$.
Finally, we may consider a  rescaling so that the middle slot of the last tractor vanishes:
according to \eqref{transtr}, the corresponding $\Up^a$ equals to $(-1)^r\dif{U}{2r+1}^a$ along the curve.
Hence the statement follows.
\end{proof}

Now we are ready to identify the closest relatives of conformal circles among $r$-null curves.
We know from preceding subsections that the first $2r+3$ derived tractors $\form T,\form U,\dots,\diform{U}{2r+1}$ are linearly independent.
Therefore we can never achieve any of the conditions $\ddform U=0$, \dots, $\diform{U}{2r+1}=0$ for $r$-null curves.
The simplest conceivable condition appears in item (a) of the following Theorem. 

\begin{theorem}
\label{r-null}
For an $r$-null curve, the following conditions are equivalent:
\begin{enumerate}[(a)]
\item the curve with a projective parametrization $(\al=0)$ obeys $\diform{U}{2r+2}=0$,
\item the curve (with and arbitrary parametrization) obeys $\De_{2r+4}=0$ and $\Th_4=\dots=\Th_{2r+3}=0$,
\item there is a metric in the conformal class such that $U^{(r)}_c U^{(r)}{}^c=\pm 1$, $\dif{U}{2r+1}^a=0$ and $U^c\Rho_{ca}=0$.
\end{enumerate}
\end{theorem}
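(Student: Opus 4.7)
The plan is a cycle (a) $\Rightarrow$ (c) $\Rightarrow$ (b) $\Rightarrow$ (a), combining the tractor apparatus of Lemma \ref{'''} with the Wilczynski machinery of subsection \ref{wilczynski}.

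For (a) $\Rightarrow$ (c), I would start from the projective parametrization supplied by (a) and apply Lemma \ref{'''} to produce a scale satisfying its two conclusions $U^{(r)}_cU^{(r)c}=\pm 1$ and $\dif{U}{2r+1}^a=0$. In this scale, a direct calculation of $\diform{U}{2r+2}$ via the tractor connection (starting from the reduced form of $\diform{U}{2r+1}$ given by Lemma \ref{'''}) produces a tractor whose middle slot is essentially $U^a\Rho_a{}^b$; the hypothesis $\diform{U}{2r+2}=0$ then forces $U^c\Rho_{ca}=0$, possibly after a final scale adjustment that preserves the other conclusions.

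For (c) $\Rightarrow$ (b), in the scale and parametrization of (c) I would replay the induction from the proof of Lemma \ref{'''}, using \eqref{UiUj} together with $U^c\Rho_{ca}=0$ to obtain $\diform{U}{j}=\smat{0\\\dif{U}{j}^a\\0}$ for $j\le 2r$, then $\diform{U}{2r+1}=\smat{(-1)^{r-1}\vo\\0\\0}$ from $\dif{U}{2r+1}^a=0$, and finally $\diform{U}{2r+2}=0$ by the same last step. Consequently the tractor ODE \eqref{ode} reduces in this parametrization to the trivial $\diform{T}{2r+3}=0$, so all Wilczynski invariants of the associated operator vanish; since each $\Th_i$ is a relative conformal invariant, its vanishing is parametrization-independent, giving $\Th_4=\dots=\Th_{2r+3}=0$. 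The linear dependence $\diform{U}{2r+2}=0$ trivially yields $\De_{2r+4}=0$, completing (b).

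For (b) $\Rightarrow$ (a), pass to a projective parametrization so that $\al=0$ and \eqref{ode} is in Laguerre--Forsyth form. Proposition \ref{wilca} makes $\Th_3$ and $\Th_5$ vanish automatically, and combined with the hypotheses of (b) this kills all Wilczynski invariants of the operator. By Remark \ref{rem-wilca}(1) this is equivalent to $\al=\be=\ga=\dots=\g{\diform{U}{2r+2}}{\diform{U}{2r+2}}=0$; the generating rule for Gram pairings together with \eqref{starting-anti} then forces all entries $\g{\diform{U}{j}}{\diform{U}{2r+2}}$ with $j\le 2r+1$ to vanish, so $\diform{U}{2r+2}$ is orthogonal to every element of the subbundle $\spanb{\form T,\form U,\ldots,\diform{U}{2r+1}}$. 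Since $\De_{2r+4}=0$ also places $\diform{U}{2r+2}$ inside that subbundle and the induced metric on it is nondegenerate, $\diform{U}{2r+2}=0$, giving (a).

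The main obstacle I expect is the invocation of Remark \ref{rem-wilca}(1) in the step (b) $\Rightarrow$ (a). Extracting the vanishing of the scalars $\be,\ga,\ldots$ from the vanishing of all Wilczynski invariants of a linear operator in Laguerre--Forsyth form is classical but somewhat tedious: it can be handled by an iterative elimination of the coefficients of \eqref{ode} or, more cleanly, by appealing to Wilczynski's characterization of $\partial^{2r+3}$ by the vanishing of its invariants.
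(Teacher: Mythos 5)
Your architecture is the same as the paper's --- the (a)/(b) link via the tractor ODE \eqref{ode}, Proposition \ref{wilca} and Remark \ref{rem-wilca}(1), and the (a)/(c) link via the adapted scale of Lemma \ref{'''} --- just arranged as a cycle instead of two separate equivalences. Your steps (c)~$\Rightarrow$~(b) and (b)~$\Rightarrow$~(a) are correct; in the latter, deducing $\diform{U}{2r+2}=0$ from the vanishing of all the norms via orthogonality to the nondegenerate subbundle $\spanb{\form T,\dots,\diform{U}{2r+1}}$ is a perfectly good (and slightly more explicit) substitute for the paper's appeal to the general theory of linear equations.

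The step (a)~$\Rightarrow$~(c) has a gap as written. In the scale of Lemma \ref{'''} the reduced tractor is $\diform{U}{2r+1}=\smat{(-1)^{r-1}\vo\\ 0\\ \rho}$ with $\rho=-U^c\dif{U}{2r}^d\Rho_{cd}$, and this bottom slot does \emph{not} vanish a priori. Consequently the middle slot of $\diform{U}{2r+2}$ is $\rho\,U^b+(-1)^{r-1}\vo\,U^a\Rho_a{}^b$ rather than ``essentially $U^a\Rho_a{}^b$'', so $\diform{U}{2r+2}=0$ only tells you that $U^c\Rho_{ca}$ is proportional to $U_a$. Your fallback --- ``a final scale adjustment'' --- cannot rescue this: any further rescaling with $\Up^a\neq0$ along the curve reintroduces a nonzero middle slot in $\diform{U}{2r+1}$, destroying $\dif{U}{2r+1}^a=0$. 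The missing ingredient, which is how the paper proceeds, is to observe first that (a) forces $\g{\diform{U}{2r+1}}{\diform{U}{2r+1}}=0$ (all the norms from \eqref{alefbet} vanish once $\diform{U}{2r+2}=0$, e.g.\ by your own Gram-matrix argument); since this norm equals twice the product of the top and bottom slots, $\rho=0$ follows, and only then does the middle slot of $\diform{U}{2r+2}$ reduce to a nonzero multiple of $U^a\Rho_a{}^b$. (Alternatively, contracting the proportionality relation with $\dif{U}{2r}_b$ and using $U_c\dif{U}{2r}^c=(-1)^r\vo$ gives $-\rho=\rho$ directly.) With this repair the proposal is complete.
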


\begin{proof}
  Expressing the condition $\De_{2r+4}=0$ as \eqref{ode}, the equivalence of (a) and (b) follows from the general theory of linear differential equations and the fact that $\Th_3$ vanishes automatically, cf. Proposition \ref{wilca} and Remark \ref{rem-wilca}(1).

For the rest we assume the scale is chosen as in the proof of Lemma \ref{'''}.
In such scale, the derived tractors are
\begin{equation*}
\begin{gathered}
\form T=\pmat{0\\0\\1},\quad
\form U=\pmat{0\\U^a\\0},\quad
\dots,\quad
\diform{U}{2r}=\pmat{0\\\dif{U}{2r}^a\\0},\quad
\diform{U}{2r+1}=\pmat{(-1)^{r-1}\\0\\-U^c\dif{U}{2r}^d\Rho_{cd}}.
\end{gathered}
\end{equation*}
The additional assumption $U^c\Rho_{ca}=0$ from (c) then yields $\diform{U}{2r+2}=0$ and so (a) holds.

Conversely, from (a) it in particular follows $\g{\diform{U}{2r+1}}{\diform{U}{2r+1}}=0$, hence the bottom slot of $\diform{U}{2r+1}$ has to vanish.
Then $\diform{U}{2r+2}=0$ implies that $U^c\Rho_{ca}=0$ and so  (c) holds.
\end{proof}

Any curve satisfying (any of) the conditions (a)--(c) is called \textit{conformal $r$-null helix}.
The condition $\De_{2r+4}=0$ is equivalent to the fact that the subbundle $\spanb{\form T,\form U,\dots,\diform{U}{2r+1}}\subset\T$ of rank $2r+3$ is parallel along the curve.
In the flat case, this means that the conformal $r$-null helix is contained in the conformal image of a $(2r+1)$-dimensional Euclidean subspace. 
With condition (c), it easily follows that these curves are just the conformal images of so-called \textit{null Cartan helices}, cf. \cite{Duggal2007}.
Note that the conformal circles fit to the just given description for $r=0$.

\begin{remark}
As in subsection \ref{prop-geod}, one may construct conserved quantities along conformal $r$-null helices on specific conformal manifolds.
The two examples mentioned in Propositions \ref{prop-ein} and \ref{prop-ckf} 
have obvious counterparts for general $r$, provided that the respective functions are replaced by 
\begin{equation*}
\frak s:=\g{\diform{U}{2r+1}}{\form S}
\qquad\text{and}\qquad
\frak k:=\form K(\form U,\diform{U}{2r+1}).
\end{equation*}
The reasoning is the same, explicit expressions are analogous, but more complicated.
\end{remark}

\subsection{Remarks on Lorenzian signature}\label{lorenz}
In this subsection we suppose an $n$-dimensional conformal manifold of Lorenzian signature $(n-1,1)$, hence the standard tractor bundle $\T$ with the bundle metric of signature $(n,2)$.
Following the scheme of previous subsections, a lot of things simplify as the maximal isotropic subspace in the tangent space has dimension one.

The only admissible $r$-null curves in this signature correspond to $r=1$.
Therefore we may speak without a risk of confusion just about null curves, instead of 1-null curves.
The assumption $U_cU^c=0$ implies that $U_cU'^c=0$, thus the vector $U'^a$ must be space-like.
The lift to $\T$ is provided by the density $u=\sqrt{U'_c U'^c}$.
The smallest nondegenerate subbundle in $\T$ built from the derived tractors is of rank 5 and has signature $(3,2)$.
Hence its orthogonal complement is positive definite.
The first few starting relations are indicated in the following Gram matrix for the sequence 
$\big(\form T,\form U,\dform U,\ddform U,\dddform U, \ddddform U \big)$, cf. \eqref{alefbet} and \eqref{starting-anti}:
\begin{equation*}
\pmat{0&0&0&0&1&0\\[0.5ex]
0&0&0&-1&0&\al\\[0.5ex]
0&0&1&0&-\al&-\tfrac32 \al'\\[0.5ex]
0&-1&0&\al&\tfrac12 \al'&\tfrac12 \al''-\be\\[0.5ex]
1&0&-\al&\tfrac12 \al'&\be&\tfrac12 \be'\\[0.5ex]
0&\al&-\tfrac32 \al'&\tfrac12 \al''-\be&\tfrac12 \be'& \ga}.
\end{equation*}

The projective parametrization of the null curve corresponds to $\al=\g{\ddform U}{\ddform U}=0$.

The trivial Gram determinants are $\De_1=\De_2=\De_3=\De_4=0$ and $\De_5=1$.
The first nontrivial one is
\begin{align*}
\De_6
&=\ga+\al''\al-2\be\al-\tfrac94\al'^2+\al^3,
\end{align*}
provided that the dimension of conformal manifold is $n\ge 4$, which is assumed in the rest of this subsection (the special case $n=3$ is dealt individually in Remark \ref{dim3}).
This invariant is nonnegative:

\begin{lemma}
$\De_6\ge0$.
\end{lemma}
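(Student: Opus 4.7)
The strategy mirrors the proof of Lemma \ref{Ph}, but uses an orthogonal decomposition in the tractor bundle rather than a preferred parametrization. Set
\[
W:=\spanb{\form T,\form U,\dform U,\ddform U,\dddform U}\subset\T.
\]
By the discussion in subsection \ref{null} specialized to $r=1$ and conformal signature $(p,q)=(n-1,1)$, the tractor metric restricts to $W$ nondegenerately with signature $(3,2)$; accordingly $W^\perp\subset\T$ has signature $(n-3,0)$ and is positive definite provided $n\geq 4$. The signature of $W$ then forces $\operatorname{sgn}(\Delta_5)=(-1)^{2}=+1$, so $\Delta_5>0$. (Expanding the top-left $5\times5$ block of the displayed Gram matrix along the first row and column, one sees that only the anti-diagonal permutation contributes and that all $\alpha$- and $\beta$-terms are blocked by repeated column usage, yielding the constant value $\Delta_5=1$.)

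Now decompose $\ddddform U=w+v$ with $w\in W$ and $v\in W^\perp$. Performing on the $6\times 6$ Gram matrix of $(\form T,\form U,\dform U,\ddform U,\dddform U,\ddddform U)$ the unit-determinant row and column operations that subtract the $W$-coefficients of $w$ from the last row and last column produces the block-diagonal matrix
\[
\pmat{G|_W & 0\\ 0 & \langle v,v\rangle},
\]
whose determinant is therefore
\[
\Delta_6=\Delta_5\cdot\langle v,v\rangle.
\]
Both factors are nonnegative: $\Delta_5>0$ by the signature argument, and $\langle v,v\rangle\geq 0$ because $v$ lies in the positive-definite subbundle $W^\perp$. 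Hence $\Delta_6\geq 0$, with equality precisely when $\ddddform U\in W$, which matches the conformal $1$-null helix condition of Theorem \ref{r-null}.

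The only non-routine point is the signature assertion for $W$. That both negative directions of $\T$ sit inside $W$ is visible from the two null pairings $\langle\form T,\dddform U\rangle=1$ and $\langle\form U,\ddform U\rangle=-1$ recorded in the Gram matrix, each contributing a hyperbolic plane of signature $(1,1)$; the extra spacelike direction is supplied by $\dform U$, whose middle slot is proportional to the spacelike vector $U'{}^a$. Once this is in place, the rest is linear algebra.
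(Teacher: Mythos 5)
Your proof is correct, and it rests on the same key fact as the paper's --- that the orthocomplement of $W=\spanb{\form T,\form U,\dform U,\ddform U,\dddform U}$ in $\T$ is positive definite in Lorentzian signature --- but it packages the argument differently. The paper first passes to a projective parametrization ($\al=0$), where $\De_6$ reduces to $\ga=\g{\ddddform U}{\ddddform U}$, exhibits the explicit tractor $\form V_5=\ddddform U-\be\,\form U-\tfrac12\be'\,\form T\in W^\perp$ with $\g{\form V_5}{\form V_5}=\ga$, and then extends to arbitrary parametrizations by the evenness of the conformal weight of $\De_6$. You instead work in an arbitrary parametrization from the start: the unipotent change of basis replacing $\ddddform U$ by its $W^\perp$-component $v$ gives $\De_6=\De_5\,\g{v}{v}=\g{v}{v}\ge0$ directly. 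This buys you a parametrization-free argument that dispenses with both the normalization step and the weight-parity step, and it is really just an early instance of the identity $\De_{i+1}=\De_i\,\g{\form V_i}{\form V_i}$ that the paper uses later in the same subsection. Two small caveats on your side remarks: the equality case $\De_6=0$ (i.e.\ $\ddddform U\in W$) is only \emph{part} of the conformal null helix characterization --- by Proposition \ref{1-null} one also needs $\Th_4=0$ --- so ``matches the helix condition'' overstates it; and your hyperbolic-plane count for the signature $(3,2)$ of $W$ is loose, since the planes $\spann{\form T,\dddform U}$ and $\spann{\form U,\ddform U}$ are not mutually orthogonal in general. The cleaner justification (already implicit in subsection \ref{null}) is that $W$ contains the totally isotropic plane $\spann{\form T,\form U}$, which forces at least two negative directions, while $\det(G|_W)=\De_5=1>0$ excludes signature $(4,1)$. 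Neither point affects the validity of the proof of the inequality itself.
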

\begin{proof}
For a projective parametrization of the null curve, $\al=0$, we have $\De_6=\ga$.
According to this choice, one easily verifies that the tractor
\begin{equation*}
\form V_5 := \ddddform U - \be\,\form U -  \tfrac12\be'\,\form T
\end{equation*}
belongs to the orthogonal complement of the subbundle $\spanb{\form T,\form U,\dots,\dddform U}$ in $\T$. 
This complement has the positive definite signature, 
hence we conclude with 
$\ga=\g{\ddddform U}{\ddddform U} = \g{\form V_5}{\form V_5} \geq 0$.
Since the weight of $\De_6$ is even, it is a nonnegative function independently of the parametrization of the curve.
\end{proof}

Accordingly, the 1-form \eqref{null-ds} leading to the definition of conformal pseudo-arc-length may be substituted by 
\begin{equation}
ds:=\sqrt[6]{\De_6(t)}\,dt.
\label{lorenz-ds}
\end{equation}

Clearly, vanishing of $\De_6$ is equivalent to vanishing of $\form V_5$, which yields exactly the equation \eqref{ode}, for $r=1$.
Among the corresponding Wilczynski invariants, there is only one nontrivial, namely,
\begin{align}
\begin{split}
\Th_4
&=-\tfrac1{25}(25\be+10\al''-21\al^2),
\end{split}
\label{lorenz-Th4}
\end{align}
cf.\ Proposition \ref{wilca}.
According to Theorem \ref{r-null}, we may conclude with

\begin{proposition}
\label{1-null}
Conformal null helices are characterized by the pair of equations 
\begin{equation*}
\De_6=0
\qquad\text{and}\qquad
\Th_4=0.
\end{equation*}
\end{proposition}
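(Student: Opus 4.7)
My proof proposal is to specialize Theorem \ref{r-null} to the Lorenzian setting, where the only admissible $r$-null curves have $r=1$. Condition (b) of that theorem characterizes conformal $r$-null helices as those curves obeying $\De_{2r+4}=0$ together with $\Th_4=\dots=\Th_{2r+3}=0$. Substituting $r=1$ gives the conditions $\De_6=0$ and $\Th_4=\Th_5=0$.

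The only remaining point is therefore to observe that $\Th_5$ vanishes automatically, so that $\Th_4=0$ alone suffices. But this is precisely the content of Proposition \ref{wilca}, which asserts the identical vanishing of both $\Th_3$ and $\Th_5$ for any $r$-null curve. Hence for $r=1$ the full list of conditions in Theorem \ref{r-null}(b) collapses to the two equations displayed in the proposition, and the equivalence follows.

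So the proof is essentially a one-line specialization: apply Theorem \ref{r-null} together with Proposition \ref{wilca}. The only thing one might want to double-check is that the explicit formula \eqref{lorenz-Th4} for $\Th_4$ obtained from Proposition \ref{wilca} (with $r=1$) is indeed consistent with the coefficients there, namely $r(r+3)/10=2/5$ and $(r+3)(2r+5)(5r+4)/(10(r+1)(r+2)(2r+3))=4\cdot 7\cdot 9/(10\cdot 2\cdot 3\cdot 5) = 21/25$; this agrees with $\Th_4=-\be-\tfrac{2}{5}\al''+\tfrac{21}{25}\al^2=-\tfrac1{25}(25\be+10\al''-21\al^2)$, as claimed in the excerpt. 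No step here looks like a genuine obstacle, so I do not expect any hard part: the work has been already accomplished in Theorem \ref{r-null} and Proposition \ref{wilca}, and the present proposition is just their combined output in the Lorenzian case.
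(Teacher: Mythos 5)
Your proposal is correct and matches the paper's own argument: the paper likewise derives Proposition \ref{1-null} by specializing Theorem \ref{r-null}(b) to $r=1$ and invoking Proposition \ref{wilca} to dispose of $\Th_3$ and $\Th_5$, leaving $\Th_4$ as the only nontrivial Wilczynski invariant of the fifth-order equation. Your coefficient check confirming \eqref{lorenz-Th4} (with $\vo=1$ since $U'^a$ is space-like in Lorenzian signature) is consistent with the paper.
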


To build the tractor Frenet frame, we start with 
\begin{equation}
\form U_0:=\form T,\qquad
\form U_1:=\form U,\qquad
\form U_{2}:=\dform U.
\label{lorenz-U0}
\end{equation}
Now, all admissible transformations determining $\form U_3$ and $\form U_4$, so that the 5-tuple $(\form U_0,\dots\form U_4)$ has the antidiagonal Gram matrix, are 
\begin{align}
\begin{split}
\form U_{3}&:=-\ddform U-\tfrac12\al\,\form U +k\,\form T, \\
\form U_{4}&:=\dddform U+\al\,\dform U +\ell \,\form U+\tfrac12(\al^2-\be)\,\form T,
\end{split}
\label{lorenz-U3}
\end{align}
where $k+\ell=\frac12\al'$.
For the rest of this demonstration we choose
\begin{equation}
k=0\qquad\text{and}\qquad \ell=\tfrac12\al'.
\label{kl}
\end{equation}
As mentioned  above, the orthocomplement to the subbundle $\spanb{\form T, \dots,\dddform U}=\spanb{\form U_0,\dots,\form U_4}$ has positive definite signature.
Therefore, we easily complete the tractor Frenet frame 
\begin{equation*}
(\form U_0, \dots, \form U_4; \form U_5, \dots,\form U_{n+1})
\end{equation*}
in the spirit of \eqref{Ui} so that $\form U_i\in \spanb{\form T,\form U,\dots,\diform{U}{i-1}}$, for all admissible $i$, and the corresponding Gram matrix is 
\begin{equation*}
\mbox{\footnotesize{$
\left(
\begin{BMAT}(@)[3pt]{ccc0ccc}{ccc0ccc}
&&1&&& \\ 
&\text{\reflectbox{$\ddots$}}&&&& \\ 
1&&&&& \\
&&&1&& \\
&&&&\ddots& \\
&&&&&1 \\
\end{BMAT}
\right).
$}}
\end{equation*}

Now, differentiating with respect to the conformal pseudo-arc-length we approach the identities with the generating set of absolute conformal invariants.
Just from \eqref{lorenz-U0} and \eqref{lorenz-U3}, with respect to the choice \eqref{kl}, we easily deduce first four Frenet-like identities:
\begin{align}
\dform U_0&=\form U_1, \label{lorenz-U0'} \\
\dform U_1&=\form U_2, \\
\dform U_2&=K_1\form U_1-\form U_3, 
\qquad\text{where}\qquad 
K_1:=-\tfrac12\al, \label{lorenz-K1} \\
\dform U_3&=K_2\form U_0-K_1\form U_2-\form U_4,
\qquad\text{where}\qquad
K_2:=\tfrac12(\al^2-\be). \label{lorenz-K2}
\end{align}
The next step is subtle, but fully analogous to the one announced in Lemma \ref{lem-frenet}:
the only coefficient in the expression of $\dform U_4$, which cannot be deduced from the previous identities, is the one of $\form U_5$, i.e. $\g{\dform U_4}{\form U_5}$.
It however follows that this coefficient is constant, namely,
\begin{align}
\dform U_4=-K_2\form U_1  +\form U_5.
\label{lorenz-U4'}
\end{align}
(The crucial point in the argument is the expression of $\form V_5$ and the observation that $\g{\form V_5}{\form V_5}=\De_6$.
Since we assume parametrization by the conformal pseudo-arc-length, this equals to 1, thus $\form U_5=\form V_5$.)
The rest is easy so that one quickly summarizes as
\begin{align}
\begin{split}
&\dform U_5=-\form U_0+K_3\form U_6,\\
&\dform U_i=-K_{i-3}\form U_{i-1}+K_{i-2}\form U_{i+1},\qquad\text{for}\ i=6,\dots,n,\\
&\dform U_{n+1}=-K_{n-2}\form U_{n}.
\label{lorenz-trfrfor}
\end{split}
\end{align}
The cluster of equations from \eqref{lorenz-U0'} through \eqref{lorenz-U4'} to \eqref{lorenz-trfrfor} forms the {tractor Frenet equations} associated to  the null curve determining its conformal curvatures.
Schematically, the tractor Frenet equations may be written as
\begin{equation*}
\mbox{\footnotesize{$
\left(
\begin{BMAT}(@)[1.2pt]{c}{ccccc0cccc}
\dform U_0\\ \dform U_1\\ \dform U_2\\ \dform U_3\\ \dform U_4\\ \dform U_5\\ \dform U_6\\ \vdots\\  \dform U_{n+1}
\end{BMAT}
\right)
=
\left(
\begin{BMAT}(@)[1.8pt]{ccccc0cccc}{ccccc0ccccc}
&1&&&&&&& \\
&&1&&&&&& \\
&K_1&&-1&&&&& \\
K_2&&-K_1&&-1&&&& \\
&-K_2&&&&1&&& \\
-1&&&&&&\ \ K_3&& \\
&&&&&-K_3&&& \\
&&&&&&\ddots&\ \ \ \ddots&\\ 
&&&&&&&&K_{n-2} \\
&&&&&&&-K_{n-2}& \\
\end{BMAT}
\right)
\cdot
\left(
\begin{BMAT}(@)[2.1pt]{c}{ccccc0cccc}
\form U_0\\ \form U_1\\ \form U_2\\ \form U_3\\ \form U_4\\ \form U_5\\ \form U_6\\ \vdots\\ \form U_{n+1}
\end{BMAT}
\right)
$}}
\end{equation*}

Altogether, we summarize as follows:

\begin{proposition}
The system of tractor Frenet equations determines a generating set of absolute conformal invariants of the null curve.
On a conformal manifold of Lorenzian signature and dimension $n\ge 4$, it consists of $n-2$ conformal curvatures that are expressed,
with respect to the conformal pseudo-arc-length parametrization, 
as
\begin{align*}
K_i=
\begin{cases}
\g{\dform U_{i+1}}{\form U_{i+2}},&\qquad\text{for}\ i=1,2 \\
\g{\dform U_{i+2}}{\form U_{i+3}},&\qquad\text{for}\ i=3,\dots,n-2. 
\end{cases}
\end{align*}
Among these curvatures, only $K_2$ depends on additional choices in the construction of  $\form U_3$ and $\form U_4$.
\end{proposition}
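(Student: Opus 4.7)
The plan is to read off all three assertions directly from the tractor Frenet equations \eqref{lorenz-U0'}--\eqref{lorenz-trfrfor} already established in the text.

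First I would identify the nontrivial coefficients. Since the Gram matrix of the tractor Frenet frame $(\form U_0,\dots,\form U_{n+1})$ is antidiagonal with ones on the first $5\times 5$ block and the identity on the remaining block, the pseudo-orthonormality conditions $\g{\form U_i}{\form U_j}=\mathrm{const}$ force the matrix of the Frenet system to be skew with respect to this inner product. Inspection of \eqref{lorenz-K1}, \eqref{lorenz-K2} and \eqref{lorenz-trfrfor} then confirms that all but $n-2$ of the independent entries are either zero or equal to one of the fixed structure constants $\pm 1$; the remaining entries are precisely $K_1,\dots,K_{n-2}$. Their explicit forms follow from the two types of pairing: for $i=1,2$ the coefficient of $\form U_{i+2}$ in $\dform U_{i+1}$ is given by $K_i=\g{\dform U_{i+1}}{\form U_{i+2}}$ because $\form U_{i+1}$ is paired antidiagonally with $\form U_{i+2}$ within the first block; for $i\ge 3$ the coefficient of $\form U_{i+3}$ in $\dform U_{i+2}$ is $K_i=\g{\dform U_{i+2}}{\form U_{i+3}}$, since these tractors lie in the diagonal block with identity Gram matrix.

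Secondly, I would argue that each $K_i$ is an absolute conformal invariant. The lift $\form T$ from \eqref{T} is conformally invariant, the derived tractors $\diform{U}{j}$ are built using the invariant connection $\tnabla$, and the pseudo-arc-length parameter defined by \eqref{lorenz-ds} is invariant up to an additive constant. The orthonormalization process leading to the Frenet tractors uses only the parallel tractor metric; therefore the coefficients of the resulting equations, hence all $K_i$, are invariant functions of the curve parametrized by pseudo-arc-length.

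The main obstacle, and the only genuinely nontrivial step, is the final claim that the freedom in the construction of $\form U_3,\form U_4$ affects only $K_2$. I would parametrize the one-parameter family of admissible pairs by $k=c$, $\ell=\tfrac12\al'-c$ for an arbitrary function $c$; the base choice \eqref{kl} corresponds to $c=0$. Relative to this base the transformed frame satisfies $\form U_3\mapsto\form U_3+c\form U_0$ and $\form U_4\mapsto\form U_4-c\form U_1$, while $\form U_0,\form U_1,\form U_2$ are unaltered. The rank-$5$ subbundle $\spanb{\form U_0,\dots,\form U_4}$ is therefore unchanged, hence so is its orthocomplement in $\T$ and the tractors $\form U_5,\dots,\form U_{n+1}$ produced by orthonormalization. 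Differentiating the identities $\g{\form U_2}{\form U_0}=0$ and $\g{\form U_3}{\form U_1}=1$ along the curve yields $\g{\dform U_2}{\form U_0}=0$ and $\g{\dform U_3}{\form U_1}=0$; substituting these into the formulas for the new $K_1$ and $K_2$ shows that $K_1$ is unchanged whereas $K_2$ picks up precisely the shift $c'$. For $i\ge 3$ the formula $K_i=\g{\dform U_{i+2}}{\form U_{i+3}}$ involves only unchanged tractors and their derivatives and is therefore independent of $c$.
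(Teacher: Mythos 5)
Your proposal is correct and takes essentially the same route as the paper: the proposition is a summary of the derivation of the equations \eqref{lorenz-U0'}--\eqref{lorenz-trfrfor}, and you read the curvatures off those equations and check the effect of the residual freedom by an explicit change of frame; your computation that the shift lands entirely in $K_2$ (as the addition of $c'$) usefully makes explicit what the paper merely asserts. One small imprecision: for $i=1,2$ the quantity $K_i=\g{\dform U_{i+1}}{\form U_{i+2}}$ is not the coefficient of $\form U_{i+2}$ in $\dform U_{i+1}$ (that coefficient is $-1$), but the coefficient of its antidiagonal partner $\form U_{2-i}$; this does not affect the validity of the formulas or of the rest of your argument.
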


As in subsection \ref{revise}, there are alternative ways of expressing the conformal curvatures.
The first two, i.e. the two exceptional, conformal curvatures $K_1$ and $K_2$ are given in terms of initial tractors with respect to the conformal pseudo-arc-length parametrization in \eqref{lorenz-K1} and \eqref{lorenz-K2}.
One may obtain the expressions with respect to an arbitrary reparametrization by a substitution according to \eqref{ti-al}, respectively \eqref{ti-Uk}, where $g'=\sqrt[6]{\De_6}$.
In the former case, one ends up with the formula very similar to the one in \eqref{K1}, the later case is more ugly.
For the higher conformal curvatures we may proceed as follows.
Firstly, an analogue of Proposition \ref{prop-revise} turns out to be 

\begin{proposition}
With respect to the conformal arc-length parametrization, 
the higher conformal curvatures can be expressed as 
\begin{align}
K_i =\sqrt{\frac{\g{\form V_{i+3}}{\form V_{i+3}}}{\g{\form V_{i+2}}{\form V_{i+2}}}},
\qquad\text{for $i=3,\dots,n-2$}.
\label{lorenz-KiVj}
\end{align}
\end{proposition}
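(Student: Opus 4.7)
The plan is to mirror the proof of Proposition \ref{prop-revise}, shifted to account for the isotropic block at the start of the derived sequence. The central identity I would aim to establish is the inductive claim that for each $i\ge 5$ there exists a scalar $c_i$ with
\begin{equation*}
\form U_i \equiv c_i\,\diform{T}{i} \mod \spanb{\form T,\dform T,\dots,\diform{T}{i-1}},
\end{equation*}
and that this span coincides with $\spanb{\form U_0,\dots,\form U_{i-1}}$. First I would handle the base case $i=5$: the Frenet identity \eqref{lorenz-U4'} reads $\form U_5=\dform U_4+K_2\form U_1$, and differentiating the explicit formula for $\form U_4$ in \eqref{lorenz-U3} shows that $\dform U_4$ reduces to $\ddddform U=\diform{T}{5}$ modulo $\spanb{\form T,\form U,\dform U,\ddform U,\dddform U}$, which gives $c_5=1$.

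Next, for the inductive step I would differentiate the modular congruence and use that the derivative of any tractor in $\spanb{\form T,\dots,\diform{T}{i-1}}$ lies in $\spanb{\form T,\dots,\diform{T}{i}}$, to get $\dform U_i\equiv c_i\,\diform{T}{i+1} \mod \spanb{\form U_0,\dots,\form U_i}$. Comparing with the Frenet formula $\dform U_i=-K_{i-3}\form U_{i-1}+K_{i-2}\form U_{i+1}$ from \eqref{lorenz-trfrfor} then yields the recursion $c_{i+1}=c_i/K_{i-2}$, and hence $c_i=1/(K_3 K_4\cdots K_{i-3})$ for all $i\ge 5$ (with empty product when $i=5$).

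Finally, the Gram--Schmidt construction of the tractor Frenet frame directly gives $\form V_i\equiv\diform{T}{i} \mod \spanb{\form U_0,\dots,\form U_{i-1}}$, while by definition $\form U_i=\form V_i/\sqrt{\g{\form V_i}{\form V_i}}$. Matching the two modular expressions for $\form U_i$ gives
\begin{equation*}
\sqrt{\g{\form V_i}{\form V_i}}=1/c_i=K_3 K_4\cdots K_{i-3},
\end{equation*}
and dividing this identity at consecutive indices telescopes all curvatures away except $K_{i-3}$, producing the claimed formula \eqref{lorenz-KiVj} after a shift of index.

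The only real subtlety I expect is anchoring the base: the identification $\sqrt{\g{\form V_5}{\form V_5}}=1$ (i.e. $c_5=1$ matches the normalization) relies on $\g{\form V_5}{\form V_5}=\De_6$ being equal to $1$ precisely under the conformal pseudo-arc-length parametrization prescribed by \eqref{lorenz-ds}. This is exactly the observation already made in the passage preceding \eqref{lorenz-U4'}, where it is used to identify $\form U_5=\form V_5$; beyond that point the argument is purely formal bookkeeping and the main obstacle is simply keeping the index shifts consistent.
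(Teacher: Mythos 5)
Your proposal is correct and follows essentially the same route the paper intends: the Lorentzian proposition is stated as the analogue of Proposition \ref{prop-revise}, and your two congruences ($\form U_i \equiv c_i\,\diform{T}{i}$ with $c_i=1/(K_3\cdots K_{i-3})$ from the Frenet equations, and $\form V_i\equiv\diform{T}{i}$ from the orthonormalization) are exactly the shifted versions of \eqref{UiKj} and \eqref{UiVj} used there, with the base case anchored by the observation $\g{\form V_5}{\form V_5}=\De_6=1$ already made before \eqref{lorenz-U4'}. The index bookkeeping checks out.
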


Secondly, it holds $\De_{i+1}=\De_i (\g{\form V_i}{\form V_i})$, for all admissible $i$.
Since $\De_5=1$, we in particular see that all admissible determinants are positive.
Now, expressing $\g{\form V_i}{\form V_i}$ via the determinants, substituting into \eqref{lorenz-KiVj} and passing to an arbitrary parametrization, we obtain the following substitute of Theorem \ref{husty-thm}:

\begin{theorem} \label{lorenz-husty-thm}
With respect to an arbitrary parametrization,
the higher conformal curvatures can be expressed as 
\begin{align*}
K_i =\frac{\sqrt{\De_{i+2}\De_{i+4}}}{\De_{i+3}\sqrt[6]{\De_6}},
\qquad\text{for $i=3,\dots,n-2$}.
\end{align*}
\end{theorem}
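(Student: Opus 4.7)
My plan is to mirror the strategy used for Theorem \ref{husty-thm} in the Riemannian setting, taking advantage of the preparatory work already assembled in subsection \ref{lorenz}.

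First, I would show that the right-hand side of the proposed formula is an absolute conformal invariant of the curve by pure weight counting. The null analogue of Lemma \ref{lem-gram} (stated in subsection \ref{null}) gives that along a $1$-null curve, the Gram determinant $\De_j$ is a relative conformal invariant of weight $j(j-5)$ for $j=6,\dots,n+2$. Consequently the weights of $\sqrt{\De_{i+2}\De_{i+4}}$, $\De_{i+3}$ and $\sqrt[6]{\De_6}$ are $\tfrac12(i+2)(i-3)+\tfrac12(i+4)(i-1)$, $(i+3)(i-2)$ and $1$, respectively. A routine cancellation yields
\begin{equation*}
\tfrac12(i+2)(i-3)+\tfrac12(i+4)(i-1)-(i+3)(i-2)-1=0,
\end{equation*}
so the right-hand side of the claimed identity has weight $0$, i.e.\ it is an absolute conformal invariant.

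Second, I would verify the identity in the distinguished conformal pseudo-arc-length parametrization, where by Proposition \ref{lorenz-KiVj} we already have
\begin{equation*}
K_i=\sqrt{\frac{\g{\form V_{i+3}}{\form V_{i+3}}}{\g{\form V_{i+2}}{\form V_{i+2}}}}.
\end{equation*}
Here the recurrence $\De_{j+1}=\De_j\,(\g{\form V_j}{\form V_j})$, which follows directly from the Gram-Schmidt construction of the tractor Frenet frame in subsection \ref{lorenz}, combined with $\De_5=1$, allows one to solve
\begin{equation*}
\g{\form V_j}{\form V_j}=\frac{\De_{j+1}}{\De_j},
\end{equation*}
and substituting this for $j=i+2$ and $j=i+3$ in the formula for $K_i$ produces exactly $\sqrt{\De_{i+2}\De_{i+4}}/\De_{i+3}$ (using positivity of all admissible $\De_j$, also observed in subsection \ref{lorenz}). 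In the pseudo-arc-length parametrization $\De_6=1$, so the extra factor $\sqrt[6]{\De_6}$ in the denominator is trivial and the claimed identity holds.

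Finally, since both sides of the proposed equality are absolute conformal invariants and they agree on the distinguished parametrization, they must agree for any parametrization of the curve. The main obstacle I expect is merely keeping the sign conventions straight and confirming positivity of the relevant Gram determinants so that the square roots are well-defined; all of this has been essentially established already in subsection \ref{lorenz}, so the argument reduces to the two bookkeeping steps above.
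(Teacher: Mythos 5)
Your proposal is correct and follows essentially the same route as the paper: weight counting via the null analogue of Lemma \ref{lem-gram} to establish conformal invariance of the right-hand side, then verification in the conformal pseudo-arc-length parametrization using the recurrence $\De_{j+1}=\De_j\,(\g{\form V_j}{\form V_j})$ together with $\De_5=1$ and the already-established formula \eqref{lorenz-KiVj}. The weight cancellation $\tfrac12(i+2)(i-3)+\tfrac12(i+4)(i-1)-(i+3)(i-2)-1=0$ checks out, so the argument is complete.
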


\begin{remark}\label{dim3}
As we already noticed, the study of null curves in dimension $n=3$ requires an extra care.
The relative invariant $\De_6$ vanishes automatically in that case, therefore it cannot be used to define the distinguished parametrization of the curve.
However, we still have the Wilczynski invariant $\Th_4$ given by \eqref{lorenz-Th4}.
Thus, for a generic null curve in 3-dimensional manifold, we define an alternative \textit{conformal pseudo-arc-length} parameter  by integrating the 1-form
\begin{align*}
ds:=\sqrt[4]{|\Th_4(t)|}\,dt,
\end{align*}
instead of \eqref{lorenz-ds}.
Accordingly, we may follow the construction of conformal invariants as above with the following conclusion:
the whole tractor Frenet frame consists just from the first five tractors displayed in \eqref{lorenz-U0}--\eqref{lorenz-U3} and the corresponding tractor Frenet equations are just \eqref{lorenz-U0'}--\eqref{lorenz-U4'}, with $\form U_5=0$. 
In those equations, two conformal invariants appear, namely, $K_1$ and $K_2$.
It however follows from their expressions, and the fact that $\Th_4=1$ for the current conformal pseudo-arc-length parametrization, that $K_2$ is expressible as a function of $K_1$ and its derivatives, namely,
\begin{align*}
K_2 = -\frac{2}{5} K_1'' + \frac{8}{25} K_1^2+ \frac{1}{2}.
\end{align*}
Therefore we end up with only one significant absolute conformal invariant, $K_1$, as expected.
An interpretation of that invariant is still the same as in Propositions \ref{null-prop-K1}.

In the case that $\Th_4$ vanishes identically, we recover the conformal null helices discussed above, cf. Proposition \ref{1-null}.
\end{remark}

\subsection*{Acknowledgements}
Authors thank to Boris Doubrov, Rod Gover and Igor Zelenko for useful discussions.
Experiments, checks and comparisons of some explicit expressions were done with the computational system \textsc{Maple}.
J\v{S} was supported by the Czech Science Foundation (GA\v{C}R) under grant P201/12/G028, V\v{Z} was supported by the same foundation under grant GA17-01171S.


\begin{thebibliography}{10}

\bibitem{Bailey1994}
T.~Bailey, M.~Eastwood, and A.~Gover.
\newblock {Thomas's Structure Bundle for Conformal, Projective and Related
  Structures}.
\newblock {\em Rocky Mountain Journal of Mathematics}, 24(4):1191--1217, 1994.

\bibitem{Bailey1990}
T.~N. Bailey and M.~G. Eastwood.
\newblock {Conformal circles and parametrizations of curves in conformal
  manifolds}.
\newblock {\em Proceedings of the American Mathematical Society},
  108(I):215--221, 1990.

\bibitem{Beffa2003}
G.~M. Beffa.
\newblock {Relative and Absolute Differential Invariants for Conformal Curves}.
\newblock {\em Journal of Lie Theory}, 13:213--245, 2003.

\bibitem{Burstall2010}
F.~E. Burstall and D.~M.~J. Calderbank.
\newblock {Conformal submanifold geometry I-III}.
\newblock eprint \url{http://arxiv.org/abs/1006.5700}, 2010.

\bibitem{Cairns1994}
G.~Cairns, R.~Sharpe, and L.~Webb.
\newblock {Conformal invariants for curves and surfaces in three-dimensional
  space forms}.
\newblock {\em Rocky Mountain Journal of Mathematics}, 24(3):933--959, 1994.

\bibitem{Doubrov2013}
B.~Doubrov and I.~Zelenko.
\newblock {Geometry of curves in generalized flag varieties}.
\newblock {\em Transformation Groups}, 18(2):361--383, 2013.

\bibitem{Duggal2007}
K.~L. Duggal and D.~H. Jin.
\newblock {\em {Null curves and hypersurfaces of semi-Riemannian manifolds.}}
\newblock Hackensack, NJ: World Scientific, 2007.

\bibitem{Fialkow1939}
A.~Fialkow.
\newblock {Conformal geodesics}.
\newblock {\em Transactions of the American Mathematical Society}, 45(3):443--473, 1939.

\bibitem{Fialkow1942}
A.~Fialkow.
\newblock {The Conformal Theory of Curves}.
\newblock {\em Transactions of the American Mathematical Society},
  51(3):435--501, 1942.

\bibitem{Friedrich1987}
H.~Friedrich and B.~G. Schmidt.
\newblock {Conformal Geodesics in General Relativity}.
\newblock {\em Proceedings of the Royal Society A: Mathematical, Physical and
  Engineering Sciences}, 414:171--195, 1987.

\bibitem{GoEinst} A.R.~Gover.
\newblock {Laplacian operators and  $Q$-curvature on conformally Einstein manifolds}.
\newblock {\em Mathematische  Annalen}, 336:311--334, 2006.

\bibitem{Herzlich2012}
M.~Herzlich.
\newblock {Parabolic geodesics as parallel curves in parabolic geometries}.
\newblock {\em International Journal of Mathematics}, 24(9):1--17, 2012.

\bibitem{Hlavaty1934}
V.~Hlavat{\'{y}}.
\newblock {Les courbes de la vari{\'{e}}t{\'{e}} g{\'{e}}n{\'{e}}rale {\`{a}}
  $n$ dimensions}.
\newblock {\em M{{\'{e}}}morial des Sciences math{{\'{e}}}matiques}, 63:1--73,
  1934.

\bibitem{Musso1994}
E.~Musso.
\newblock {The Conformal Arclength Functional}.
\newblock {\em Mathematische Nachrichten}, 165:107--131, 1994.

\bibitem{OT}
V.~Ovsienko and S.~Tabachnikov.
\newblock {\em Projective differential geometry old and new. From the Schwarzian derivative to the cohomology of diffeomorphism groups}.
\newblock {Cambridge: Cambridge University Press}, 2005.


\bibitem{Sulanke1980}
C.~Schiemangk and R.~Sulanke.
\newblock {Submanifolds of the M{\"{o}}bius space}.
\newblock {\em Mathematische Nachrichten}, 96(1):165--183, 1980.

\bibitem{Snell2015}
D.~E. Snell.
\newblock {\em {Conformal Geometry and Conserved Quantities}}.
\newblock Thesis, The University of Auckland, 2015.

\bibitem{Sulanke1981}
R.~Sulanke.
\newblock {Submanifolds of the M\"obius space II, Frenet Formulas and Curves of Constant Curvatures}.
\newblock {\em Mathematische Nachrichten}, 100(1):235--247, 1981.

\bibitem{Wilczynski1906}
E.~J. Wilczynski.
\newblock {\em {Projective Differential Geometry of Curves and Ruled Surfaces}}.
\newblock Leipzig: B. G. Teubner, 1906.

\end{thebibliography}
\end{document}